\newcommand{\nc}{\newcommand}
\nc{\les}{\lesssim}
\nc{\nit}{\noindent}
\nc{\nn}{\nonumber}
\nc{\D}{\partial}
\nc{\diff}[2]{\frac{d #1}{d #2}}
\nc{\diffn}[3]{\frac{d^{#3} #1}{d {#2}^{#3}}}
\nc{\pdiff}[2]{\frac{\partial #1}{\partial #2}}
\nc{\pdiffn}[3]{\frac{\partial^{#3} #1}{\partial{#2}^{#3}}}
\nc{\abs}[1] {\lvert #1 \rvert}
\nc{\cAc}{{\cal A}_c}
\nc{\cE}{{\cal E}}
\nc{\cF}{{\cal F}}
\nc{\cP}{{\cal P}}
\nc{\cV}{{\cal V}}
\nc{\cQ}{{\cal Q}}
\nc{\cGin}{{\cal G}_{\rm in}}
\nc{\cGout}{{\cal G}_{\rm out}}
\nc{\cO}{{\cal O}}
\nc{\Lav}{{\cal L}_{\rm av}}
\nc{\cL}{{\cal L}}
\nc{\cB}{{\cal B}}
\nc{\cZ}{{\cal Z}}
\nc{\cR}{{\cal R}}
\nc{\cT}{{\cal T}}
\nc{\cY}{{\cal Y}}
\nc{\cX}{{\cal X}}
\nc{\cXT}{{{\cal X}(T)}}
\nc{\cBT}{{{\cal B}(T)}}
\nc{\vD}{{\vec \mathcal{D}}}
\nc{\efield}{\mathcal{E}}
\nc{\vE}{{\vec \efield}}
\nc{\vB}{{\vec \mathcal{B}}}
\nc{\vH}{{\vec \mathcal{H}}}
\nc{\ty}{{\tilde y}}
\nc{\tu}{{\tilde u}}
\nc{\tV}{{\tilde V}}
\nc{\Pc}{{\bf P_c}}
\nc{\bx}{{\bf x}}
\nc{\bX}{{\bf X}}
\nc{\bXYZ}{{\bf XYZ}}
\nc{\bY}{{\bf Y}}
\nc{\bF}{{\bf F}}
\nc{\bS}{{\bf S}}
\nc{\dV}{{\delta V}}
\nc{\dE}{{\delta E}}
\nc{\TT}{{\Theta}}
\nc{\dPsi}{{\delta\Psi}}
\nc{\order}{{\cal O}}
\nc{\Rout}{R_{\rm out}}
\nc{\eplus}{e_+}
\nc{\eminus}{e_-}
\nc{\epm}{e_\pm}
\nc{\eps}{\varepsilon}
\nc{\vnabla}{{\vec\nabla}}
\nc{\G}{\Gamma}
\nc{\w}{\omega}
\nc{\mh}{h}
\nc{\mg}{g}
\nc{\vphi}{\varphi}
\nc{\tlambda}{\tilde\lambda}
\nc{\be}{\begin{equation}}
\nc{\ee}{\end{equation}}
\nc{\ba}{\begin{eqnarray}}
\nc{\ea}{\end{eqnarray}}
\nc{\g}{\gamma}
\nc{\ol}{\overline}
\newtheorem{theorem}{Theorem}[section]
\newtheorem{lemma}[theorem]{Lemma}
\newtheorem{prop}[theorem]{Proposition}
\newtheorem{corollary}[theorem]{Corollary}
\newtheorem{defin}[theorem]{Definition}
\newtheorem{rmk}[theorem]{Remark}
\nc{\pT}{\partial_T}
\nc{\pz}{\partial_z}
\nc{\pt}{\partial_t}
\nc{\la}{\langle}
\nc{\ra}{\rangle}
\nc{\infint}{\int_{-\infty}^{\infty}}
\nc{\halfwidth}{6.5cm}
\nc{\figwidth}{10cm}
\newcommand{\f}{\frac}
\nc{\nlayers}{L} \nc{\nsectors}{M}
\nc{\indicator}{\mathbf{1}}
\nc{\Rhole}{R_{\rm hole}}
\nc{\Rring}{R_{\rm ring}}
\nc{\neff}{n_{\rm eff}}
\nc{\Frem}{F_{\rm rem}}
\nc{\R}{\mathbb R}
\nc{\Z}{\mathbb Z}
\nc{\DD}{\Delta}
\nc{\cD}{\mathcal D}
\nc{\lnorm}{\left\|}
\nc{\rnorm}{\right\|}
\nc{\rnormp}{\right\|_{\ell^{p,\eps}}}
\nc{\rar}{\rightarrow}
\date{\today}
\begin{document}

\begin{abstract}

	We investigate $L^1(\R^n)\to L^\infty(\R^n)$ dispersive estimates for the   Schr\"odinger operator $H=-\Delta+V$ when there is an eigenvalue at zero energy in even dimensions $n\geq 6$. In particular, we show  that if there is an eigenvalue at zero energy then there is a time dependent, rank one operator $F_t$ satisfying $\|F_t\|_{L^1\to L^\infty} \lesssim |t|^{2-\f n2}$ for $|t|>1$  such that
$$\|e^{itH}P_{ac}-F_t\|_{L^1\to L^\infty} \les |t|^{1-\f n2},\,\,\,\,\,\text{ for } |t|>1.$$
With stronger decay conditions on the potential it is possible to generate an operator-valued
expansion for the evolution, taking the form
\begin{align*}
	e^{itH} P_{ac}(H)=|t|^{2-\f n2}A_{-2}+
	|t|^{1-\f n2} A_{-1}+|t|^{-\f n2}A_0,
\end{align*}
with $A_{-2}$ and $A_{-1}$ mapping $L^1(\R^n)$ to $L^\infty(\R^n)$ while $A_0$ maps weighted
$L^1$ spaces to weighted $L^\infty$ spaces.
The leading-order terms $A_{-2}$ and $A_{-1}$ are both finite rank, and vanish
when certain orthogonality conditions
between the potential $V$ and the zero energy eigenfunctions are satisfied.
We show that under the same orthogonality conditions, the  remaining  $|t|^{-\f n2}A_0$ term
also exists as a map from $L^1(\R^n)$ to $L^\infty(\R^n)$, hence $e^{itH}P_{ac}(H)$ satisfies
the same dispersive bounds as the free evolution despite
the eigenvalue at zero.

\end{abstract}

\title[Dispersive Estimates for Schr\"odinger Operators: Even dimensions]{\textit{Dispersive Estimates for higher
dimensional Schr\"odinger Operators with threshold
eigenvalues II: The even dimensional case}}

\author[M.~J. Goldberg, W.~R. Green]{Michael Goldberg and William~R. Green}

\address{Department of Mathematics \\
University of Cincinnati\\
Cincinnati, OH 45221-0025}
\email{Michael.Goldberg@uc.edu}
\address{Department of Mathematics\\
Rose-Hulman Institute of Technology \\
Terre Haute, IN 47803 U.S.A.}
\email{green@rose-hulman.edu}

\thanks{This  work  
was  partially  supported  by  a  grant  from  the  Simons  Foundation (Grant  Number 281057
to  the first author.)
The second author acknowledges the support of an AMS Simons Travel grant and a
Rose-Hulman summer professional development grant.}

\maketitle

\section{Introduction}

In this paper we examine dispersive properties  of the operator $e^{itH}$, where
$H:=-\Delta+V$ with $V$ a real-valued potential on $\R^n$.  The spatial dimension may
be any even number $n \ge 6$, just as Part I of this work,~\cite{GGodd},
considered odd dimensions $n \ge 5$.
This operator
is the propagator of the 
Schr\"odinger equation
\begin{align}\label{Schr}
	iu_t+Hu=0, \qquad u(x,0)=f(x),
\end{align}
as formally,
one can write the solution to \eqref{Schr} as
$u(x,t)=e^{itH}f(x)$. 

When $V=0$, one has the dispersive estimate
$\|e^{itH}\|_{L^1\to L^\infty} \les |t|^{-\f n2}$.  This
can be easily seen by the representation
$$
	e^{-it\Delta}f(x)=\frac{1}{(4\pi i t)^{\f n2}} 
	\int_{\R^n} e^{i|x-y|^2/4t} f(y)\, dy,
$$
which one obtains through elementary properties of the
Fourier transform.
The stability of dispersive estimates under perturbation by a short range potential,
that is for a Schr\"odinger operator of the form $H = -\Delta + V$, where $V$ is 
real-valued and decays at spatial infinity, is a well-studied problem.
Where possible,
the estimate is presented in the form
\begin{equation} \label{eq:dispersive}
\lnorm e^{itH}P_{ac}(H)\rnorm_{L^1(\R^n)\to L^\infty(\R^n)} \lesssim |t|^{-n/2}.
\end{equation}
Projection onto the continuous spectrum is needed as the perturbed Schr\"odinger operator
$H$ may possess  pure point spectrum that experiences no decay at large times.  Under relatively mild assumptions
on the potential 
one has an $L^2$ conservation law for the operator $e^{itH}$.  In addition,  if $|V(x)|\leq C(1+|x|)^{-\beta}$ for some $\beta>1$ and is real-valued,
the spectrum of $H$ is composed of
a finite number of non-positive eigenvalues and purely
absolutely continuous spectrum on $(0,\infty)$, see
\cite{RS}.

The history of this problem is more thoroughly discussed in part I~\cite{GGodd}.
We recall briefly that the first results in the direction of~\eqref{eq:dispersive},
Rauch, Jensen-Kato, Jensen and Murata, \cite{Rauch,JenKat,Jen,Mur,Jen2}, studied mappings
between weighted $L^2(\R^n)$ in place of $L^1(\R^n)$ and $L^\infty(\R^n)$.
Estimates precisely of the form in~\eqref{eq:dispersive} are studied in
\cite{JSS,Wed,RodSch,GS,Sc2,GV,CCV,EG1,BecGol,Gr} by 
a number of authors in various dimensions, and with
different characterizations of the potential $V(x)$
respectively.
The first result on these global, $L^1\to L^\infty$, dispersive estimates
was the work of Journ\'e, Soffer and Sogge \cite{JSS}.
Much of the more recent work has its roots in the work
of Rodnianski-Schlag, \cite{RodSch}.
For a more detailed history, see the survey paper
\cite{Scs}.

Our main concern is the effect of obstructions at zero
energy on the time decay of the evolution.
Jensen and Kato~\cite{JenKat} showed that in three dimensions, if there is a resonance at zero energy then
the propagator $e^{itH}P_{ac}(H)$ (as an operator between polynomially weighted $L^2(\R^3)$ spaces) has leading order decay of $|t|^{-1/2}$
instead of $|t|^{-3/2}$.  In general the same effect occurs if zero is an eigenvalue, even though $P_{ac}(H)$
explicitly projects away from the associated eigenfunction.   Global $L^1\to L^\infty$ dispersive estimates are known in all lower dimensions when zero is
not a regular point of the spectrum, due to Yajima, Erdo\smash{\u{g}}an, Schlag and the authors in various
combinations, see~\cite{GS,ES2,Yaj3,ES,goldE,EG,EGG}. 
The goal of this work is to extend these studies to
all  higher dimension $n>3$.

In dimensions five and higher resonances at zero
do not occur.  In~\cite{Jen} Jensen
obtained leading order decay at the rate $|t|^{2-\frac{n}{2}}$ as an operator on weighted $L^2(\R^n)$ spaces if zero is an eigenvalue.  For  $n\geq 5$, the
subsequent terms of the asymptotic expansion have decay rates $|t|^{1-\frac{n}{2}}$ and $|t|^{-\frac{n}{2}}$
and map between more heavily weighted $L^2(\R^n)$ spaces.  
We are able to recover the same structure
of time decay with respect to mappings from $L^1(\R^n)$ to $L^\infty(\R^n)$, with a finite-rank leading order
term and a remainder that belongs to weighted spaces.
In fact, our results imply Jensen's results on weighted
$L^2(\mathbb R^n)$ spaces with reduced weights.
Perhaps the most surprising result we prove is the full dispersive estimate~\eqref{eq:dispersive} holds
without any spatial weights
if the zero-energy eigenfunctions
satisfy two orthogonality conditions, see Theorem~\ref{thm:main}  part (\ref{thmpart3}) below.

In addition we note that there has been
much study of the wave operators, 
which are defined by strong limits
on $L^2(\R^n)$,
$$
	W_{\pm}=s\mbox{-}\lim_{t\to \pm\infty}
	e^{itH}e^{it\Delta}.
$$
The $L^p$ boundedness of the wave operators, see
\cite{Yaj,FY,JY4}, relates to
dispersive estimates by way of the  
`intertwining property,' which allows us to translate
certain mapping properties of the free propagator to the
perturbed operator,
$$
	f(H)P_{ac}=W_{\pm}f(-\Delta)W_{\pm}^*.
$$
The identity is valid for Borel functions $f$. 
In dimensions $n\geq 5$, boundedness of the 
wave operators on $L^p$ for $\frac{n}{n-2}<p<\frac{n}{2}$
in the presence of an eigenvalue at zero
was established by Yajima~\cite{Yaj} in odd dimensions,
and Finco-Yajima \cite{FY} in even dimensions.
In particular, with $p^\prime$ the conjugate exponent satisfying
$\frac{1}{p}+\frac{1}{p^\prime}=1$, the boundedness of
the wave operators imply the mapping estimate
$$
	\|e^{itH}P_{ac}(H)\|_{L^p\to L^{p\prime}}\les
	|t|^{-\f n2+\frac{n}{p}}.
$$
Roughly speaking, the range of $p$ in 
the wave operator results yield a time decay rate of
$|t|^{-\frac{n}{2}+2+}$.  Similar results in lower dimensions can be found in \cite{Yaj3,JY4}.

The main results in this paper mirror the ones obtained in odd dimensions~\cite{GGodd} and we
will use the same notation and conventions where possible.
Our work here 
is mostly self-contained; we have 
omitted proofs that are proved verbatim, or those that 
require only minor  modifications of those in \cite{GGodd}.
To state our main results,
define a smooth cut-off function $\chi(\lambda)$
with $\chi(\lambda)=1$ if $\lambda<\lambda_1/2$ and
$\chi(\lambda)=0$ if $\lambda>\lambda_1$, for a
sufficiently small $0<\lambda_1\ll 1$.
Further define $\la x\ra :=(1+|x|)$,
then we use the notation for weighted $L^p$ spaces
\begin{equation*}
\lnorm f\rnorm_{L^{p,\sigma}} := \lnorm (1+|x|)^\sigma f\rnorm_p
\end{equation*}
and the abbreviations $a-:=a-\epsilon$ and $a+:=a+\epsilon$ for a small, but
fixed, $\epsilon>0$.
We prove the following low energy bounds.

\begin{theorem}\label{thm:reg}

	Assume that $n \geq 6$ is even, 
	$|V(x)|\les \la x\ra ^{-\beta}$, for some $\beta>n$
	and that zero is  not an eigenvalue of
	$H=-\Delta+V$ on $\R^n$.  Then,
	$$
		\|e^{itH} \chi(H)P_{ac}(H)\|_{L^1\to L^\infty}
		\les |t|^{-\f n2}.
	$$
	
\end{theorem}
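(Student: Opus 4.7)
\smallskip

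\noindent\textbf{Proof proposal.}  The starting point is the Stone formula, which represents the spectral cut-off evolution as
\begin{equation*}
e^{itH}\chi(H)P_{ac}(H) f = \frac{1}{2\pi i}\int_0^\infty e^{it\lambda^2}\,\lambda\,\chi(\lambda)\,\bigl[R_V^+(\lambda^2)-R_V^-(\lambda^2)\bigr] f \,d\lambda,
\end{equation*}
where $R_V^\pm(\lambda^2)=(H-\lambda^2\mp i0)^{-1}$.  By the symmetric resolvent identity, with $v=|V|^{1/2}$, $U=\mathrm{sgn}\,V$, and $M^\pm(\lambda):=U+vR_0^\pm(\lambda^2)v$,
\begin{equation*}
R_V^\pm(\lambda^2)=R_0^\pm(\lambda^2)-R_0^\pm(\lambda^2)\,v\,M^\pm(\lambda)^{-1}\,v\,R_0^\pm(\lambda^2).
\end{equation*}
The contribution of the $R_0^\pm$ term alone reproduces the free evolution bound $|t|^{-n/2}$, so the entire task is to bound the perturbative piece at the same rate.

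\smallskip

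\noindent I would next establish an expansion of $M^\pm(\lambda)^{-1}$ as $\lambda\to 0$.  The free resolvent kernel is a Hankel function, so in even dimensions $n\ge 6$ it admits a mixed power/logarithmic expansion of the form
\begin{equation*}
R_0^\pm(\lambda^2)(x,y)=\sum_{0\le j\le n/2-1}\bigl(a_j^\pm+b_j\log\lambda\bigr)\lambda^{2j}\,G_j(x,y)+\lambda^{n-2}E^\pm(\lambda,x,y),
\end{equation*}
with explicit coefficients, and $E^\pm$ enjoying $\lambda$-regularity.  Using $\beta>n$ to guarantee that $v\,G_j\,v$ is Hilbert--Schmidt on $L^2$, one reads off an analogous expansion for $vR_0^\pm(\lambda^2)v$.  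The hypothesis that zero is neither an eigenvalue nor a resonance means the leading-order operator $U+vG_0 v$ (with $G_0$ the free Green's function at zero) is invertible on $L^2$, so inverting the expansion term by term using the Neumann/resolvent identity yields a convergent asymptotic
\begin{equation*}
M^\pm(\lambda)^{-1}=\Gamma_0^\pm+\sum_{j\ge 1}\bigl(c_j^\pm+d_j\log\lambda\bigr)\lambda^{2j}\Gamma_j^\pm+\text{remainder},
\end{equation*}
whose operator-valued coefficients are bounded on appropriate weighted $L^2$-spaces.

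\smallskip

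\noindent The final step is to insert this expansion into the Stone formula and bound the resulting oscillatory integrals.  Since $e^{it\lambda^2}$ is the only $\lambda$-oscillatory factor, one invokes the standard lemma (of the type used in~\cite{GGodd}) that says $\int_0^\infty e^{it\lambda^2}\chi(\lambda)\,F(\lambda,r_1,r_2)\,d\lambda$ is bounded by $|t|^{-n/2}$ whenever $F$ is sufficiently regular in $\lambda$ and does not grow too fast in $r_1,r_2=|x-y_1|,|y_2-y|$.  Integration by parts $n/2$ times in $\lambda$ transfers $\lambda$-derivatives onto $F$; each such derivative lowers the order of $F$ by one but may produce $\log\lambda$ factors, which are integrable against $\chi$.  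The Hankel kernel pieces $R_0^\pm(\lambda^2)v$ are controlled via the stationary-phase type estimates already built in Part~I, which are insensitive to whether the expansion is polynomial or contains logarithms, provided we track derivatives carefully.

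\smallskip

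\noindent I expect the principal technical obstacle to be, as usual in the even-dimensional case, the logarithmic factors.  Individually $\log\lambda$ behaves badly both at $\lambda=0$ and under repeated differentiation ($(d/d\lambda)^k\log\lambda=O(\lambda^{-k})$), so one must verify that the singular $\log\lambda$ contributions either cancel between $R_0^+$ and $R_0^-$ (only the \emph{real} coefficients $b_j$ are shared, so the $\log$-parts of $[R_V^+-R_V^-]$ actually reduce by one power of $\log$) or are offset by the corresponding powers of $\lambda$, so that $\lambda^{2j}(\log\lambda)^m$ is differentiable the requisite $n/2$ times with an $L^1_\lambda$-bound on $(0,\lambda_1)$.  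Once this bookkeeping is in place, assembling the $y$- and $\lambda$-integrals yields the desired $|t|^{-n/2}$ bound uniformly in $x$, proving Theorem~\ref{thm:reg}.
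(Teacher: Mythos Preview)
Your strategy matches the paper's: Stone formula, symmetric resolvent identity, Neumann expansion of $M^\pm(\lambda)^{-1}$ (using that $U+vG_0^0v$ is invertible when zero is regular, so $S_1=0$ and the expansion is bounded at $\lambda=0$; see Remark~\ref{rmk:reg}), and oscillatory integral bounds. Two points, however, are under-specified in ways that matter.

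First, the identity $R_V^\pm = R_0^\pm - R_0^\pm v M^\pm(\lambda)^{-1} v R_0^\pm$ is formal in high dimensions unless one iterates the ordinary resolvent identity enough times on each side of $M^\pm(\lambda)^{-1}$ so that the strings $(R_0^\pm V)^m R_0^\pm v$ land in weighted $L^2$; the paper does this with $m+1\ge (n-3)/4$ (Lemma~\ref{lem:iterated}), producing a finite Born series piece \eqref{eq:finitebs} plus a tail \eqref{eq:bstail}. The finite Born series terms must themselves be shown to decay like $|t|^{-n/2}$ (Proposition~\ref{bsprop}), which your sketch skips.

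Second, and more substantively, the plan ``integrate by parts $n/2$ times'' does not quite give an unweighted bound. When all $n/2$ derivatives fall on the outermost free resolvent $R_0^\pm(\lambda^2)(x,z_1)$, the large-$\lambda|x-z_1|$ asymptotics $|\partial_\lambda^k R_0^\pm|\les \lambda^{(n-3)/2}|x-z_1|^{k+(1-n)/2}$ produce a factor $|x-z_1|^{1/2}$ at $k=n/2$, i.e.\ a weight $\la x\ra^{1/2}$ that is \emph{not} absorbed by $V$. The paper avoids this by integrating by parts only $n/2-1$ times and then applying a genuine stationary-phase estimate (Lemma~\ref{stat phase}, used in Lemmas~\ref{lem:BS1}--\ref{lem:BS2}) to gain the last $|t|^{-1}$ without a further $\lambda$-derivative on the outer resolvent. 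Your proposal invokes ``stationary-phase type estimates'' but does not flag this specific obstruction; the logarithms you emphasize are in fact the easier issue, since $\log\lambda$ costs only $\lambda^{0-}$ and the `+/$-$' difference eliminates the logarithmic terms at leading order (cf.\ \eqref{Jn canc}).
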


\begin{theorem}\label{thm:main}

	Assume that $n \geq 6$ is even, $|V(x)|\les \la x\ra ^{-\beta}$, and that zero is an eigenvalue of
	$H=-\Delta+V$ on $\R^n$.  The low energy Schr\"odinger propagator $e^{itH}\chi(H)P_{ac}(H)$
	possesses the following structure:
	\begin{enumerate}
	\item \label{thmpart1} Suppose that
	there exists $\psi \in {\rm Null}\,H$ such that 
	$\int_{\R^n} V\psi\,dx \not= 0$.  Then there is a rank-one time dependent operator $\|F_t\|_{L^1 \to L^\infty}\les
    |t|^{2-\f n2}$ such that for $|t| > 1,$
	$$
	e^{itH} \chi(H)P_{ac}(H) - 
	F_t= \mathcal E_1(t).
	$$ 
	Where,  $\|\mathcal E_1 \|_{L^1 \to L^\infty}=o(|t|^{2-\f n2})$ if $\beta>n$
	and  $\|\mathcal E_1 \|_{L^1 \to L^\infty}=O(|t|^{1-\f n2})$
	if $\beta>n+4$.
	\item \label{thmpart2} Suppose 
	that $\int_{\R^n} V\psi\, dx = 0$ for each $\psi \in {\rm Null}\,H$ but
           $\int_{\R^n} x_j V \psi\, dx \not= 0$ for some $\psi$ and some $j \in [1, \ldots, n]$.
	 Then there exists a finite-rank time dependent operator $G_t$ satisfying
	$\lnorm G_t\rnorm_{L^1 \to L^\infty} \les |t|^{1-\frac{n}{2}}$ such that for $|t|>1$,
	$$
	e^{itH} \chi(H)P_{ac}(H) - G_t = \mathcal E_2(t).
	$$
	Where,  $\|\mathcal E_2 \|_{L^1 \to L^\infty}=O(|t|^{1-\f n2})$ 
	and
	$ \|\mathcal E_2 \|_{ L^{1,0+} \to L^{\infty,0-}}=o(|t|^{1-\f n2})$
	 if $\beta>n+4$
	and  $\|\mathcal E_2 \|_{L^{1,1} \to L^{\infty,-1}}=O(|t|^{-\f n2})$
	if $\beta>n+8$.
	\item \label{thmpart3} Suppose $\beta>n+8$ and
	that $\int_{R^n} V\psi\, dx = 0$ and $\int_{R^n} x_j V \psi\, dx = 0$
	for all $\psi \in {\rm Null}\,H$ and all $j \in [1,\ldots,n]$.  Then
	$$
	\lnorm e^{itH} \chi(H)P_{ac}(H)\rnorm_{L^1\to L^\infty} \les |t|^{-\frac{n}{2}}
	$$
	\end{enumerate}
	
\end{theorem}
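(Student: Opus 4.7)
The plan is to begin from the Stone formula
\begin{equation*}
e^{itH}\chi(H)P_{ac}(H) = \frac{1}{\pi i} \int_0^\infty e^{it\lambda^2}\lambda \chi(\lambda^2) \bigl[R_V^+(\lambda^2) - R_V^-(\lambda^2)\bigr] \, d\lambda
\end{equation*}
and apply the symmetric resolvent identity $R_V(\lambda^2) = R_0(\lambda^2) - R_0(\lambda^2)\,v M^{-1}(\lambda)\,v R_0(\lambda^2)$, where $v = |V|^{1/2}$, $U = \mathrm{sgn}(V)$, and $M(\lambda) = U + v R_0(\lambda^2) v$. The purely free contribution is absorbed into the known $|t|^{-n/2}$ bound, so the whole matter reduces to analyzing the oscillatory integral whose amplitude is $\lambda \chi(\lambda^2) R_0^{\pm}(\lambda^2) v\bigl[M^+(\lambda)^{-1} - M^-(\lambda)^{-1}\bigr]v R_0^{\pm}(\lambda^2)$, and to pinning down the small-$\lambda$ singular behavior of $M(\lambda)^{-1}$.

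The central algebraic step is to extract a Laurent-type expansion of $M(\lambda)^{-1}$ about $\lambda = 0$. Since the even-dimensional free resolvent has an asymptotic expansion in powers of $\lambda^{2j}$ and $\lambda^{2j}\log\lambda$, one can write
\begin{equation*}
M(\lambda) = M_0 + \lambda^2 \log\lambda\, T_L + \lambda^2 T_2 + \cdots,
\end{equation*}
where $M_0 = U + v R_0(0) v$ fails to be invertible precisely on the image of the zero-energy eigenspace under $\phi \mapsto v\phi$. Iterating the Jensen--Nenciu/Feshbach lemma on successive kernels converts $M(\lambda)^{-1}$ into a sum whose most singular pieces are finite-rank operators with coefficients given by inner products against the eigenfunctions: the leading piece takes the form $M(\lambda)^{-1} \sim (\lambda^2 \log\lambda)^{-1}\,\Pi$ with $\Pi$ rank one and proportional to $|\int V\psi\,dx|^2$, while the next singular piece lives on the span of $\{v\psi,\,v x_j \psi\}$ and is controlled by the dipole moments $\int x_j V \psi\,dx$.

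Substituting this expansion into the Stone integral produces the three cases. For part~(\ref{thmpart1}), $F_t$ is the contribution of the $(\lambda^2\log\lambda)^{-1}$ leading term; the scalar integral $\int_0^\infty e^{it\lambda^2}\lambda^{-1}(\log\lambda)^{-1}\chi(\lambda^2)\,d\lambda$ yields the prefactor $|t|^{2-n/2}$ by a direct endpoint analysis, and the spatial kernel $R_0(0) V\psi \otimes \overline{V\psi}\,R_0(0)$ is bounded, giving a rank-one $L^1\to L^\infty$ operator. The remainder $\cE_1$ consists of lower-order terms in the Laurent expansion; each one is estimated by integrating by parts in $\lambda$ against $e^{it\lambda^2}$, with the gain of one power of $\lambda$ producing either the $o(|t|^{2-n/2})$ bound (under $\beta > n$) or the $O(|t|^{1-n/2})$ bound (under $\beta > n+4$). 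For part~(\ref{thmpart2}), the hypothesis $\int V\psi\,dx = 0$ annihilates the leading singular piece, so $G_t$ is generated by the dipole-type next order; the sharper bound on $\cE_2$ requires carrying the expansion two further orders and bringing in spatial weights to absorb the factors of $x_j$ produced by subtracting the Taylor expansion of $R_0(\lambda^2)$ at $\lambda = 0$. For part~(\ref{thmpart3}), both orthogonality conditions kill every finite-rank singular contribution, $M(\lambda)^{-1}$ becomes bounded near zero up to $\log\lambda$ factors that are harmless under integration by parts, and the argument of Theorem~\ref{thm:reg} applies to give the unweighted $|t|^{-n/2}$ bound.

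The principal obstacle is the bookkeeping of the logarithmic factors endemic to even dimensions: each application of the Feshbach reduction introduces further $\log\lambda$ terms that must be tracked exactly, and verifying that the two orthogonality conditions produce enough cancellation to attain a genuine $|t|^{-n/2}$ bound without weights in part~(\ref{thmpart3}) requires the expansion to be effective with errors decaying faster than $\lambda^{n-2}$. A secondary technical difficulty is estimating oscillatory integrals whose amplitudes carry $\log\lambda$ singularities at the endpoint $\lambda = 0$; the standard integration-by-parts argument has to be coupled with the observation that derivatives of $\log\lambda$ are only mildly singular and can be absorbed into the cut-off without degrading the expected time decay.
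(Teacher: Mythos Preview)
Your expansion of $M(\lambda)$ is wrong for $n\geq 6$. You write $M(\lambda)=M_0+\lambda^2\log\lambda\,T_L+\lambda^2 T_2+\cdots$, but in $\R^n$ with $n\geq 6$ the free resolvent has \emph{no} logarithm until order $\lambda^{n-2}$:
\[
R_0^\pm(\lambda^2)=G_0^0+\lambda^2 G_1^0+\cdots+\lambda^{n-4}G_{\frac{n}{2}-2}^0+\lambda^{n-2}(a\log\lambda+z^\pm)G_{n-2}^c+\cdots,
\]
with all $G_j^0$ real-valued. Consequently the leading singularity of $M^\pm(\lambda)^{-1}$ is $-D_1/\lambda^2$ with $D_1=(S_1vG_1^0vS_1)^{-1}$ real and \emph{identical} for both signs; it cancels in the Stone difference $R_V^+-R_V^-$ and cannot produce $F_t$. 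The $|t|^{2-\frac{n}{2}}$ term comes instead from the first place the $\pm$ branches actually differ, namely $g_1^\pm(\lambda)/\lambda^4$ in the Neumann series for $M^\pm(\lambda)^{-1}$, giving
\[
M^+(\lambda)^{-1}-M^-(\lambda)^{-1}=c\,\lambda^{n-6}\,D_1vG_{n-2}^cvD_1+\cdots,
\]
and the relevant scalar integral is $\int e^{it\lambda^2}\lambda^{n-5}\chi\,d\lambda\sim|t|^{2-\frac{n}{2}}$. No $(\log\lambda)^{-1}$ integral occurs anywhere. The rank-one operator is $P_eV1VP_e$, arising from $D_1vG_{n-2}^cvD_1=wP_eV1VP_ew$ sandwiched between iterated $G_0^0V$'s.

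The orthogonality conditions act on these higher-order coefficients, not on a Feshbach hierarchy: $P_eV1=0$ kills $D_1vG_{n-2}^cvD_1$ (the $\lambda^{n-6}$ term), and $P_eVx=0$ additionally kills $D_1vG_n^cvD_1$ (the $\lambda^{n-4}$ term). Only one application of the Jensen--Nenciu lemma is needed here since $n\geq 5$ has no resonances. Your picture appears to be modeled on $n=2$ or $n=4$, where the logarithm genuinely sits at the leading order of $M(\lambda)-M_0$ and the inversion produces $(\log\lambda)^{-1}$ factors; for $n\geq 6$ that structure is absent. Two further points: the symmetric resolvent identity must be iterated on each side (the paper's \eqref{eq:bstail}) because $R_0^\pm(\lambda^2)v$ does not land in $L^2$ in high dimensions, and the resulting finite Born series \eqref{eq:finitebs} requires genuine work (Section~\ref{sec:BS}) to bound by $|t|^{-n/2}$ without weights --- it is not simply ``absorbed.''
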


We note that the assumption that
$\int_{\R^n} V\psi\, dx = 0$ for each $\psi \in {\rm Null}\,H$ is equivalent to assuming that the operator
$P_eV1=0$ with $P_e$ the projection onto the zero-energy
eigenspace.  Further, $\int_{R^n} x_j V \psi\, dx = 0$
for each $j=1,2,\dots, n$ 
is equivalent to assuming the operator $P_eVx=0$.

These results are fashioned similarly to the asymptotic expansions in~\cite{Jen}, with particular
emphasis on the behavior of the resolvent of $H$ at low energy.  If one assumes greater decay
of the potential, then it becomes possible to carry out the resolvent expansion to a greater number of
terms, which permits a more detailed description of the time decay of $e^{itH}\chi(H)P_{ac}(H)$.
We note that while $F_t$ and $G_t$ above have a concise construction, expressions for higher order
terms in the expansion are unwieldy enough to discourage writing out an exact formula.

The extension to the main theorem is as follows.

\begin{corollary}\label{cor:ugly}

If $|V(x)|\les \la x\ra^{-n-8-}$, and there is an
eigenvalue of $H$ at zero energy, then 
we have the operator-valued expansion
\begin{align}
	e^{itH} \chi(H)P_{ac}(H)= c|t|^{2-\f n2}P_eV1VP_e +
	|t|^{1-\f n2} A_{-1} + |t|^{-\f n2}A_0(t).
\end{align}
There exist uniform bounds for 
$P_eV1VP_e:L^1\to L^\infty$, 
$A_{-1}:L^{1}\to L^{\infty}$, and $A_0(t):L^{1,2}\to L^{\infty, -2}$.  
The operator $P_eV1VP_e$ is a rank one operator
and $A_{-1}$ is finite rank.
Furthermore, if $P_eV1=0$, then \
$A_0(t):L^{1,1}\to L^{\infty, -1}$. If $P_eV1=0$ and $P_eVx=0$
then $A_{-1}$ vanishes and 
$A_0(t):L^1\to L^\infty$ uniformly in $t$.  

\end{corollary}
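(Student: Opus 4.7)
The strategy is to refine the three-part conclusion of Theorem~\ref{thm:main} into a single unified expansion by identifying the leading-order rank-one and finite-rank operators concretely. The hypothesis $\beta > n+8$ is precisely what is needed to apply the sharpest error estimates of Theorem~\ref{thm:main}(\ref{thmpart2}), namely that the post-subtraction remainder maps $L^{1,1}\to L^{\infty,-1}$ at size $O(|t|^{-n/2})$, and which opens the resolvent expansion to enough terms to name explicitly the leading two contributions.

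First, I would open up the construction of $F_t$ from the proof of Theorem~\ref{thm:main}(\ref{thmpart1}). That rank-one operator arises from the most singular Laurent coefficient of $R_V(\lambda^2)$ at $\lambda=0$. Standard low-energy expansions in the symmetric resolvent identity framework of~\cite{GGodd} (cf.~Jensen~\cite{Jen}) identify this coefficient as a multiple of $P_e$, which when substituted into the spectral representation
\begin{equation*}
e^{itH}\chi(H)P_{ac}(H) = \frac{1}{\pi i}\int_0^\infty e^{it\lambda^2}\lambda\chi(\lambda^2)\bigl[R_V(\lambda^2+i0)-R_V(\lambda^2-i0)\bigr]\,d\lambda
\end{equation*}
and processed through the Birman--Schwinger identities that are used to pass from weighted $L^2$ to $L^1\to L^\infty$ bounds, produces precisely $c|t|^{2-n/2}P_eV1VP_e$ for an explicit constant $c$. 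The rank-one structure follows from the factorization of the leading Laurent term together with the bracketing of the two copies of $V$ coming from the resolvent identity.

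Second, the operator $A_{-1}$ is identified with the next-order Laurent coefficient of $R_V(\lambda^2)$, which is built out of finitely many pairings involving $P_eVx$. When $P_eV1=0$, the most singular term of the resolvent expansion vanishes and the hypothesis of Theorem~\ref{thm:main}(\ref{thmpart2}) is in force; the finite-rank operator $G_t$ produced there is exactly $|t|^{1-n/2}A_{-1}$. The weighted mapping bound $A_0(t):L^{1,1}\to L^{\infty,-1}$ then reads directly off the $\|\mathcal{E}_2\|_{L^{1,1}\to L^{\infty,-1}}=O(|t|^{-n/2})$ estimate of Theorem~\ref{thm:main}(\ref{thmpart2}). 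If additionally $P_eVx=0$, the next Laurent coefficient also vanishes so $A_{-1}=0$, which reduces the claim to Theorem~\ref{thm:main}(\ref{thmpart3}) and gives the unweighted $L^1\to L^\infty$ bound on $A_0(t)$. For the generic case without orthogonality, an extra power of spatial weight must be used to absorb the difference between the true remainder of Theorem~\ref{thm:main}(\ref{thmpart1}) and the second named term $|t|^{1-n/2}A_{-1}$, yielding the $L^{1,2}\to L^{\infty,-2}$ bound stated.

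The principal obstacle will be the first step: pinning down the leading rank-one operator concretely as $c|t|^{2-n/2}P_eV1VP_e$ rather than treating $F_t$ as the black box supplied by Theorem~\ref{thm:main}(\ref{thmpart1}). This requires revisiting the low-energy resolvent expansion, tracking the precise coefficient of $\lambda^{-2}$ through the symmetric resolvent identity, and justifying the constant $c$ from the one-dimensional stationary-phase evaluation of $\int_0^\infty e^{it\lambda^2}\lambda^{-1}\chi(\lambda^2)\,d\lambda$. Once that identification is in hand, the remainder of the corollary is a direct rephrasing of the three parts of Theorem~\ref{thm:main} under the strongest decay assumption.
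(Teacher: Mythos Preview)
Your overall framework is right, and for the cases $P_eV1=0$ (with or without $P_eVx=0$) the corollary really is a direct repackaging of Theorem~\ref{thm:main}(\ref{thmpart2}) and (\ref{thmpart3}) as you say.  The gap is in the generic case $P_eV1\neq 0$.

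Theorem~\ref{thm:main}(\ref{thmpart1}) only gives you $e^{itH}\chi(H)P_{ac}(H) - F_t = O(|t|^{1-\f n2})$ as a map $L^1\to L^\infty$.  To reach the corollary you must further split this remainder into a finite-rank $|t|^{1-\f n2}$ piece plus an $|t|^{-\f n2}$ piece, and you have not explained where the weight of order $2$ comes from, nor why the $|t|^{1-\f n2}$ piece is genuinely finite rank.  Your phrase ``an extra power of spatial weight must be used to absorb the difference'' is the whole content of the missing argument.  Concretely, the obstruction is the term $\lambda^{-2}(R_0^+(\lambda^2)-R_0^-(\lambda^2))VP_e$ (and its adjoint) appearing in Lemma~\ref{lem:nocanc}; Lemma~\ref{lem:2canc} only bounds its contribution by $|t|^{1-\f n2}$ unweighted, which is not sharp enough.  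The paper isolates a separate estimate (Lemma~\ref{lem:L12} and Corollary~\ref{cor:L12}) showing that this term contributes $c|t|^{1-\f n2}1VP_e + O(|t|^{-\f n2})$, with the error mapping $L^1\to L^{\infty,-2}$.  The $\la x\ra^2$ weight arises because after extracting the constant $G_{n-2}^c$ piece one is left with $\lambda^n G_n^c(x,z_1)=c_n\lambda^n|x-z_1|^2$ in the small-argument expansion of $R_0^+-R_0^-$, and this quadratic growth in $x$ cannot be avoided without the orthogonality hypothesis.  The finite-rank structure of $A_{-1}$ in the generic case then comes from the explicit operators $1VP_e$ and $P_eV1$, not from ``pairings involving $P_eVx$'' as you wrote.

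Two smaller points: the oscillatory integral producing the constant $c$ is $\int_0^\infty e^{it\lambda^2}\lambda^{n-5}\chi(\lambda)\,d\lambda$ (from $\lambda\cdot\lambda^{n-6}$ in the Stone formula), not the divergent $\lambda^{-1}$ integral you wrote; and your description of $A_{-1}$ as coming from a single ``next-order Laurent coefficient'' understates it, since contributions at order $\lambda^{n-4}$ arise both from $M^+(\lambda)^{-1}-M^-(\lambda)^{-1}$ and from the free-resolvent differences in~\eqref{eq:Minv canc}.
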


We note that this expansion could continue indefinitely
in powers of $|t|^{-\f n2 -k}$, $k\in \mathbb N$.  The
operators would be finite rank 
between successively more heavily
weighted spaces and it would require more decay on the
potential $V$.  We do not pursue this issue.

High energy dispersive bounds in dimension $n \ge 4$
require more assumptions on the smoothness of
the potential, which was shown in the counterexample 
constructed by the first author and Visan
in \cite{GV}.  In contrast the present work is concerned
with the effect of zero energy eigenvalues,
which is strictly a low energy issue.  
Accordingly our theorems
stated above use the low-energy cut-off $\chi(H)$
so that no differentiability on the potential is required.

As in odd dimensions, we note that
the estimates we prove can be combined with
the large energy estimates in, for example, \cite{Yaj, FY} to 
prove analogous statements for the full evolution
$e^{itH}P_{ac}(H)$ without the low-energy cut-off.
The work cited above assumes that
the polynomially weighted Fourier transform of $V$ satisfies
$$
	\mathcal F(\la x\ra^{2\sigma}V)\in L^{n_*}(\R^n)
	\qquad \textrm{ for } \sigma>\frac{1}{n_*}=\frac{n-2}{n-1}.
$$
Roughly speaking, this corresponds to having more than
$\frac{n-3}{2}+\frac{n-3}{n-2}$ derivatives of $V$
in $L^2$.

The statements of our main results are identical to
those given in the companion paper, \cite{GGodd} for
odd dimensions $n\geq 5$.  The analysis for even dimensions
in this paper proceeds along similar lines, but is technically more
challenging.  One reason for this
is the appearance of the logarithms in the 
expansions and the inability to write a closed-form
expression for the resolvents, see \eqref{Yn low}
below.

The limiting resolvent operators are defined as
\begin{align*}
	R_V^{\pm}(\lambda^2)=\lim_{\epsilon \to 0^+}
	(-\Delta+V-(\lambda^2\pm i \epsilon))^{-1}.
\end{align*}
These operators are well-defined on certain weighted
$L^2(\R^n)$ spaces, see \cite{agmon}.  In fact, 
there is a zero energy eigenvalue precisely when this
operator becomes unbounded as $\lambda \to 0$.
While the number of spatial dimensions does not appear
explicitly in the expression above, the behavior of resolvents
for small $\lambda$ is strongly shaped by whether $n$ is 
odd or even.
When odd dimensional resolvents are expanded
in powers of $\lambda$,  
one has the operator-valued expansion
$$
R_V^+(\lambda^2)=\frac{A}{\lambda^2}+\frac{B}{\lambda}+O(1), \qquad 0<\lambda <\lambda_1\ll 1.
$$
In even dimensions one has expansions in terms of
$\lambda^k (\log \lambda)^\ell$. 
For instance, in \cite{EG}
it was shown that in $\R^2$ if there is
a zero energy eigenvalue that one has the operator-valued
expansion (for $0<\lambda<\lambda_1$)
$$
	R_V^{+}(\lambda^2)=\frac{A}{\lambda^2}+
	\frac{B}{\lambda^2(a \log \lambda+ z)} +O(\lambda^{-2}(\log \lambda)^{-2}), \quad  a\in \R\setminus \{0\}, \,\, z \in \mathbb C\setminus \R. 
$$
If, in addition, one  assumes that there are
no zero-energy resonances (solutions to $H\psi=0$ with
$\psi \notin L^2(\R^2)$ but
$\psi \in L^\infty (\R^2)$), one has the expansion
$$
R_V^+(\lambda^2)=\frac{A}{\lambda^2}+
(a \log \lambda+z)B+O((\log\lambda)^{-1}),
$$
with different constants $a,z$ and  
a different operator $B$.  We give only results for
$R_V^+$ since $R_V^-(\lambda^2)=\overline{R_V^+(\lambda^2)}$.
In \cite{EGG} it was shown that the
resolvents in four-spatial dimensions have similar,
though not identical, expansions as those written
above for two dimensions. 
In these lower dimensions it is known that, whether zero is an eigenvalue 
or not, time decay of the Schr\"odinger evolution is faster if there
is not a resonance at zero, see \cite{Mur,ES2,Yaj3,Scs,EG,EG2,EGG} for
example.

As usual (cf. \cite{RodSch,GS,Sc2}), the dispersive estimates follow by considering the operator $e^{itH}\chi(H)P_{ac}(H)$
as an element of the functional calculus of $H$.  Using the Stone formula, and the standard change of
variables $\lambda \mapsto \lambda^2$, we have
\begin{align*}
	e^{itH}\chi(H)P_{ac}(H)f(x)=\frac{1}{2\pi i} 
	\int_0^\infty e^{it\lambda^2} \lambda \chi(\lambda)
	[R_V^+(\lambda^2)-R_V^-(\lambda^2)]f(x)\, 
	d\lambda,
\end{align*}
with the difference of resolvents $R_V^{\pm}(\lambda^2)$ providing the absolutely continuous spectral measure.
For $\lambda > 0$ (and if also at $\lambda = 0$ if zero is a regular point of the spectrum) the resolvents are well-defined on certain
weighted $L^2$ spaces.  The key issue when zero energy is not regular is to control
the singularities in the spectral measure  as $\lambda\to 0$.

Here $R_V^\pm(\lambda^2)$ are operators whose integral
kernel we write as $R_V^\pm(\lambda^2)(x,y)$.  That is,
the action of the operator is defined by
\begin{align*}
	R_V^\pm(\lambda^2)f(x)= \int_{\R^n}R_V^\pm(\lambda^2)(x,y) f(y) \, dy.
\end{align*}

The analysis in this paper focuses on bounding the
oscillatory integral
\begin{align}\label{Stone}
	\int_0^\infty e^{it\lambda^2} \lambda \chi(\lambda)
	[R_V^+(\lambda^2)-R_V^-(\lambda^2)](x,y)\, 
	d\lambda
\end{align}
in terms of $x,y$ and $t$.  A uniform bound of the form
$\sup_{x,y} |\eqref{Stone}|\les |t|^{-\alpha}$ would give
us an estimate on $e^{itH}P_{ac}(H)$ as an operator from
$L^1\to L^\infty$.   We leave open the option of dependence
on $x$ and $y$ to allow for estimates between weighted
$L^1$ and weighted $L^\infty$ spaces.  
That is, an estimate of
the form $|\eqref{Stone}|\les |t|^{-\alpha}\la x\ra^{\sigma'} \la y\ra^{\sigma}$ implies an estimate
for $e^{itH}P_{ac}(H)$
 as an operator from $L^{1,\sigma}$ to $L^{\infty, -\sigma'}$.

The paper is organized as follows.
We begin in Section~\ref{sec:resolv} by developing expansions for the free
resolvent and develop necessary machinery to
understand the spectral measure
$E'(\lambda)=\frac{1}{2\pi i}
[R_V^+(\lambda^2)-R_V^-(\lambda^2)]$.  In
Section~\ref{sec:BS},
we prove dispersive estimates for the finite Born
series series, \eqref{eq:finitebs}, which is the
portion of the low energy evolution
that is unaffected by zero-energy eigenvalues.
Each of these terms experiences time decay of order
$|t|^{-\f n2}$, consistent with the generic dispersive
estimate~\eqref{eq:dispersive}.
  Next, in  Section~\ref{sec:sing}
  we prove dispersive estimates for the tail of the 
  Born series, \eqref{eq:bstail}, which is the portion of the
  evolution that is sensitive to the existence of zero-energy eigenvalues
and to the eigenspace orthogonality conditions specified in Theorem~\ref{thm:main}.
Finally, in Section~\ref{sec:Spec} we provide a characterization
of the spectral subspaces of $L^2$ related to the
zero energy eigenspace and provide technical integral estimates
required to establish the dispersive bounds.

\section{Resolvent Expansions}\label{sec:resolv}

In this section we first develop expansions for the integral kernels of the free
resolvents $R_0^{\pm}(\lambda^2):=(-\Delta-(\lambda^2\pm i0))^{-1}$ to understand the perturbed resolvent operators
$R_V^{\pm}(\lambda^2):=(-\Delta+V-(\lambda^2\pm i0))^{-1}$
with the aim of understanding the spectral measure in
\eqref{Stone}.

In developing these expansions we employ the following
notation used in \cite{GGodd} when considering odd
spatial dimensions.  We write
$$
	f(\lambda)=\widetilde O(g(\lambda))
$$
to indicate that
$$
	\frac{d^j}{d\lambda^j}f(\lambda)=O\bigg(\frac{d^j}{d\lambda^j}g(\lambda)\bigg).
$$
If the relationship holds only for the first $k$ 
derivatives, we use the notation 
$f(\lambda)=\widetilde{O}_k(g(\lambda))$.  With a slight
abuse of notation, we may write $f(\lambda)=\widetilde O(\lambda^k)$ for an integer $k$, to indicate that $\frac{d^j}{d\lambda^j}f(\lambda)=O(\lambda^{k-j})$.
This distinction is particularly important for when
$k\geq 0$ and $j>k$.

Writing the free resolvent in terms of the Hankel functions we have
\begin{align}\label{Hankel}
	R_0(z)(x,y)=\frac{i}{4} \bigg(\frac{z^{1/2}}{2\pi |x-y|}\bigg)^{\frac{n}{2}-1} H_{\frac{n}{2}-1}^{(1)}(z^{1/2}|x-y|).
\end{align}
Here $H_{\frac{n}{2}-1}^{(1)}(\cdot)$ is the Hankel function of the first kind.   When $n$ is even we have
the Hankel function of integer order, which
cannot be expressed in closed form.  This stands in contrast to the odd dimensional free resolvents
which possess a closed form expansion composed of finitely many terms, see for example \cite{Jen}. 
That difference, along
with the appearance of the logarithm in the expansion
\eqref{Yn low}
often makes the even dimensional case more technically
difficult.

We note that
$$
	H_{\frac{n}{2}-1}^{(1)}(z)=J_{\f n2-1}(z)+i Y_{\f n2-1}(z),
$$
where $J_{\f n2-1}$ and $Y_{\f n2-1}$ are the Bessel
functions of integer order.  We note the small $|z|\ll 1$ expansions for the
Bessel functions (c.f. \cite{AS})
\begin{align}
	J_{\f n2-1}(z)&= \bigg(\frac{z}{2}\bigg)^{\f n2-1}
	\sum_{k=0}^\infty\frac{ (-\frac{z^2}{4})^k}
	{k! \Gamma(\f n2+k)}\label{Jn low}\\
	Y_{\f n2-1}(z)&= \frac{-1}{\pi (2z)^{\f n2-1}}\sum_{k=0}^{\f n2-2} \frac{(\f n2-k-2)!}
	{k!}\bigg(\frac{z^2}{4}\bigg)^k+\frac{2}{\pi}
	\log(z/2) J_{\f n2-1}(z)\nn \\
	&-\frac{z^{\f n2-1}}{\pi
	2^{\f n2-1}} \sum_{k=0}^\infty 
	\big\{\psi(k+1)+\psi\big(\f n2+k+2\big)\big\}\frac{(-\frac{1}{4}z^2)}{k! (\f n2-1+k)!}
	\label{Yn low}
\end{align}

In addition, one has the large $|z|\gtrsim 1$ expansion
\begin{align}\label{Jn asymp}
	J_{\f n2 -1}(z)=e^{iz}\omega_+(z)+e^{-iz}\omega_-(z), \qquad
	\omega_{\pm}(z)= \widetilde O(z^{-\f12}).
\end{align}
A similar expansion is valid for $Y_{\f n2 -1}(z)$ with different
functions $\omega_{\pm}(z)$ that satisfy the same bounds.
In fact, such an expansion is valid for any Bessel 
function of integer or half-integer order for
$|z|\gtrsim 1$.

Recall that $R_0^-(\lambda^2)=\overline{R_0^+(\lambda^2)}$.
In particular, using the expansions of the Bessel
functions \eqref{Jn low} and \eqref{Yn low} in \eqref{Hankel}
with $z=\lambda|x-y|$,
we use the following explicit representation for the kernel of the limiting resolvent operators $R_0^\pm(\lambda^2)$ 
(see, {\it e.g.}, \cite{Jen}).  In particular,
\begin{align}\label{even d taylor}
	R_0^+(\lambda^2)(x,y)=\sum_{j=0}^\infty \sum_{k=0}^1
	\lambda^{2j}(\log \lambda)^k G_{j}^k,
\end{align}
which is valid when $\lambda|x-y|\ll 1$ for 
operators $G_j^k$ which are defined by
\begin{align}
	G_j^0&=\left\{ \begin{array}{ll}c_j |x-y|^{2+2j-n}& 
	0\leq j\leq \frac{n}{2}-2\\
	(a_j+ib_j)|x-y|^{2+2j-n}+c_j|x-y|^{2+2j-n}
	\log |x-y| & 
	j\geq \frac{n}{2}-1 \end{array}
	\right.
	\label{Gj0 defn}\\
	G_j^1&=\left\{ \begin{array}{ll}0 & 
	0\leq j\leq \frac{n}{2}-2\\
	b_j|x-y|^{2+2j-n} & 
	j\geq \frac{n}{2}-1 \end{array}
	\right. \label{Gj1 defn}
\end{align}
where $a_j,b_j,c_j \in \R$ and $b_j\neq 0$.

It is worth noting that $G_0^0=(-\Delta)^{-1}$.  To make
the expansions more usable for the purposes of this paper,
when $j\geq \frac{n}{2}-1$, 
we break the operators into
real and imaginary parts.  We define
\begin{align}
	G_j^r&= a_j|x-y|^{2+j-n}+c_j |x-y|^{2+j-n}\log|x-y|,
	\label{Gjreal}\\
	G_j^c&=b_j|x-y|^{2+j-n}.\label{Gjcomp}
\end{align}
We choose to use this representation
since it allows us to
separate operators by the size of
its $\lambda$ dependence as $\lambda\to 0$ and explicitly
identify the imaginary parts of the expansion. 

In addition, the following functions of $\lambda$ occur
naturally in the expansion.
\begin{align}
	g_1^+(\lambda)=\lambda^{n-2}
	(a_1\log \lambda+z_1),\,\,
	g_2^+(\lambda)=\lambda^{n}(a_2\log \lambda+z_2),
	 \,\,
	g_3^+(\lambda)=\lambda^{n+2}(a_3\log \lambda +z_3)
\end{align}
with $a_j\in \R\setminus \{0\}$ and $z_j\in \mathbb C \setminus \R$.  In addition, we have that 
$$
	g_j^-(\lambda)=\overline{g_j^+(\lambda)},
$$
and
\begin{align}\label{eq:g diff}
	g_j^+(\lambda)-g_j^-(\lambda)=2\Im(z_j)\lambda^{n-4+2j},
	\qquad j=1,2,3.
\end{align}

It is worth noting that from the expansions of the
Bessel functions, \eqref{Yn low}, we have
\begin{align}
	g_1^\pm(\lambda)G_{n-2}^c+\lambda^{n-2} G_{n-2}^r
	=\lambda^{n-2}(A_1^\pm+A_2\log(\lambda |x-y|)),
	\label{g1 comb}\\
	g_2^\pm(\lambda) G_n^c
	+\lambda^n G_n^r=\lambda^{n-2}(\lambda|x-y|)^2
	(B_1^\pm+B_2\log (\lambda|x-y|))\label{g2 comb}\\
	g_3^\pm(\lambda) G_{n+2}^c
	+\lambda^{n+2} G_{n+2}^r=\lambda^{n-2}(\lambda|x-y|)^4
	(C_1^\pm+C_2\log (\lambda|x-y|))\label{g3 comb}
\end{align}
for some constants $A_1^\pm,A_2,B_1^\pm, B_2,C_1^\pm,C_2$.  
This follows from \eqref{Hankel} and the expansions 
\eqref{Jn low}, \eqref{Yn low}.  In particular, we note
that the logarithmic factors occur from the
$\log(z/2)J_{\f n2-1}(z)$ terms, which naturally factor
to this form.

Define the function $\log^- (z):= -\chi_{\{0<z<\f12\}}\log (z)$.
Here we note that
\begin{align}\label{eq:log-}
	|(1+\log(\lambda|x-y|))\chi(\lambda|x-y|)
	\chi(\lambda)|\les 1+ |\log \lambda|+\log^- (|x-y|).
\end{align}
This can be seen by considering the cases of 
$|x-y|<1$ and $|x-y|>1$ separately.

\begin{lemma}\label{lem:R0exp}

	For $\lambda\leq \lambda_1$,
	we have the expansion(s) for the free resolvent,
	\begin{align*}
		R_0^{\pm}(\lambda^2)(x,y)=
		G_0^0+\lambda^2 G_1^0+\dots+\lambda^{n-4}
		G_{\frac{n}{2}-2}^0+E_0^{\pm}(\lambda) 
	\end{align*}
	Where
	\begin{align*}
		E_0^{\pm}(\lambda)=(1+\log^-(|x-y|))
		\widetilde O_{\f n2-1}
		(\lambda^{n-2}(1+\log\lambda)).
	\end{align*}
	Further, for $0<\ell<2$,
	\begin{align*}
		E_0^{\pm}(\lambda)&=g_1^\pm(\lambda)G_{n-2}^c
		+\lambda^{n-2} G_{n-2}^r+E_1^\pm(\lambda)
		\qquad \textrm{ with } \qquad E_1^{\pm}(\lambda)=|x-y|^{\ell}
		\widetilde{O}_{\f n2-1}(\lambda^{n-2+\ell}),\\
		E_1^{\pm}(\lambda)&=g_2^\pm(\lambda) G_n^c
		+\lambda^n G_n^r+E_2 ^\pm(\lambda),
		\qquad \textrm{ with } \qquad E_2^{\pm}(\lambda)=|x-y|^{2+\ell}
		\widetilde O_{\frac{n}{2}+1}
		(\lambda^{n+\ell}),\\ 
		E_2^{\pm}(\lambda)&=g_3^\pm(\lambda)G_{n+2}^c
		+\lambda^{n+2}G_{n+2}^r+E_{3}^\pm(\lambda),
		\quad  \textrm{ with } \quad
		E_{3}^\pm(\lambda)=|x-y|^{4+\ell}\widetilde 
		O_{\f n2+3}(\lambda^{n+2+\ell}).
	\end{align*}

\end{lemma}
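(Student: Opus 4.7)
\emph{Proof proposal.} The plan is to start from the Hankel representation \eqref{Hankel} with $z = \lambda|x-y|$ and split into two regimes joined by a smooth cutoff in $z$: the small-argument regime $\lambda|x-y| \lesssim 1$ where the series \eqref{Jn low}--\eqref{Yn low} govern, and the large-argument regime $\lambda|x-y| \gtrsim 1$ where the asymptotic \eqref{Jn asymp} applies.

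In the small-argument regime, substituting \eqref{Jn low}--\eqref{Yn low} into \eqref{Hankel} and collecting powers of $\lambda$ produces the expansion \eqref{even d taylor}. After multiplying by the prefactor $\lambda^{\f n2-1}|x-y|^{1-\f n2}$, the finite sum of negative powers of $z$ inside $Y_{\f n2-1}$ contributes exactly the log-free polynomial terms $\lambda^{2j}G_j^0$ for $0 \le j \le \f n2-2$; everything else is gathered into $E_0^\pm$. The leading order of $E_0^\pm$ is $\lambda^{n-2}(1+\log\lambda)|x-y|^{2-n}$, arising jointly from the first term of $J_{\f n2-1}$, the first term of the second $J$-like series inside $Y_{\f n2-1}$, and the $\frac{2}{\pi}\log(z/2)J_{\f n2-1}(z)$ piece. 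The stated bound on $E_0^\pm$ then follows from \eqref{eq:log-}, since the only logarithm present is $\log(\lambda|x-y|/2)$; the $\widetilde O_{\f n2-1}$ derivative bounds come from termwise differentiation of the absolutely convergent series.

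For the sharper expansions $E_1^\pm, E_2^\pm, E_3^\pm$ I would iterate the procedure, peeling off the $j=\f n2-1, \f n2, \f n2+1$ contributions to \eqref{even d taylor} and using \eqref{g1 comb}--\eqref{g3 comb} to identify each peeled block as $g_m^\pm(\lambda) G_{n-4+2m}^c + \lambda^{n-4+2m} G_{n-4+2m}^r$ for $m=1,2,3$. The residual after peeling has leading order $\lambda^{n-2+2m}(1+\log\lambda)|x-y|^{2m-n+2}$; since $\lambda|x-y| \lesssim 1$, the trivial estimate $(\lambda|x-y|)^{2-\ell} \le 1$ for any $0 < \ell < 2$ rewrites this as $|x-y|^{2(m-1)+\ell}\cdot \widetilde O(\lambda^{n-2+2(m-1)+\ell})$, which is exactly the stated form of $E_m^\pm$. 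This is the trick that trades a small power of $\lambda$ for a small power of $|x-y|$ and simultaneously absorbs the $\log\lambda$ factor.

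In the large-argument regime $\lambda|x-y| \gtrsim 1$, \eqref{Jn asymp} gives $R_0^\pm(\lambda^2)(x,y) = e^{\pm i\lambda|x-y|}\widetilde O(\lambda^{(n-3)/2}|x-y|^{(1-n)/2})$; since $\lambda|x-y| \gtrsim 1$, this expression is pointwise dominated by the required polynomial bound on $E_m^\pm$, and the leading polynomial terms $\lambda^{2j}G_j^0$ on this region are likewise absorbed into the error. The two regimes are stitched together by a smooth cutoff in $\lambda|x-y|$, whose derivatives remain benign thanks to the same $(\lambda|x-y|)^{2-\ell}\le 1$ trade-off. The principal obstacle I expect is bookkeeping: verifying that the $\widetilde O_k$ derivative bounds survive cleanly at the transition, and confirming that the $\log|x-y|$ terms are correctly sorted into $G_j^r$ without polluting $G_j^c$ via the identifications \eqref{g1 comb}--\eqref{g3 comb}.
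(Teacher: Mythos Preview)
Your proposal is correct and mirrors the paper's proof: split according to the size of $\lambda|x-y|$, use the series \eqref{even d taylor} together with the identities \eqref{g1 comb}--\eqref{g3 comb} and the logarithm bound \eqref{eq:log-} in the small regime, the asymptotic \eqref{Jn asymp} in the large regime, and the trade $(\lambda|x-y|)^{2-\ell}\lesssim 1$ to convert between powers of $\lambda$ and $|x-y|$ and to absorb the logarithm. One small correction: the leading part of $E_0^\pm$ for $\lambda|x-y|\ll 1$ is $\lambda^{n-2}(A_1^\pm+A_2\log(\lambda|x-y|))$ with no $|x-y|^{2-n}$ prefactor (at index $j=\tfrac n2-1$ the spatial power is $|x-y|^0$), though this does not affect your use of \eqref{eq:log-} or the rest of the argument.
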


\begin{proof}

Using the expansion  \eqref{even d taylor}
when $\lambda |x-y|\ll 1$, one has
\begin{align}
	R_0^\pm(\lambda^2)&=G_0^0+\sum_{j=1}^{\frac{n-4}{2}}
	\lambda^{2j}G_{j}^0
	+g_1^\pm(\lambda)G_{n-2}^c
	+\lambda^{n-2} G_{n-2}^r+g_2^\pm(\lambda) G_n^c
	+\lambda^n G_n^r\nn \\
	&+g_3^\pm(\lambda)G_{n+2}^c+\lambda^{n+2}G_{n+2}^r	+\widetilde O(\lambda^{n-2}(\lambda |x-y|)^{6}\log(\lambda|x-y|))\label{eq:R0low}
\end{align}
This can, of course, be truncated eariler.  For 
$E_0^\pm (\lambda)$ we note that for $\lambda|x-y|\ll 1$,
\begin{align*}
	E_0^\pm(\lambda^2)&=-g_1^\pm(\lambda)G_{n-2}^c
	-\lambda^{n-2} G_{n-2}^r+\widetilde O(\lambda^{n-2}(\lambda |x-y|)^{2}\log(\lambda|x-y|))
\end{align*}
For the first two terms, using \eqref{g1 comb}
and \eqref{eq:log-}, we note that
\begin{multline*}
	\lambda^{n-2}G_{n-2}^r+g_1^{\pm}(\lambda)G_{n-2}^c
	=\lambda^{n-2}(A_1^{\pm}+A_2\log(\lambda|x-y|))\\
	=(1+\log^-|x-y|)
	\widetilde O_{\f n2-1}
	(\lambda^{n-2}(1+\log\lambda)).
\end{multline*}

The remaining error bounds for $\lambda |x-y|\ll 1$ are clear from \eqref{eq:R0low}, noting that
$$
	\widetilde O(\lambda^{n-2} (\lambda|x-y|)^2 \log(\lambda|x-y|))=\widetilde O(\lambda^{n-2} (\lambda|x-y|)^\ell))
$$
for any $0\leq \ell <2$.

On the other hand, if $\lambda|x-y|\gtrsim 1$ then
the asymptotic expansion of the Hankel functions
in \eqref{Hankel}, see \eqref{Jn asymp} or
\cite{AS}, yield
\begin{align}\label{R0 high}
	R_0^{\pm}(\lambda^2)
	=e^{\pm i\lambda|x-y|} \frac{\lambda^{\frac{n-2}{2}}}
	{|x-y|^{\frac{n-2}{2}}} \omega_\pm(\lambda|x-y|)
\end{align}
where $\omega_\pm(z)=\widetilde O(z^{-\f12})$.  Here,
differentiation in $\lambda$ in 
is comparable to either division by $\lambda$ or multiplication by $|x-y|$.  So that for $0\leq k\leq
\frac{n}{2}-1$,
\begin{align}\label{R0 high derivs}
	|\partial_\lambda^k R_0^{\pm}(\lambda^2)(x,y)|
	&\les \frac{\lambda^{\frac{n-3}{2}}}
	{|x-y|^{\frac{n-1}{2}}}
	(\lambda^{-k}+|x-y|^k)
	\les\lambda^{\frac{n-3}{2}}|x-y|^{k+\frac{1-n}{2}}
	\les \lambda^{n-2-k} .
\end{align}
Where we used $|x-y|^{-1}\les \lambda$. 
If $k\geq \f n2$, we note that multiplication by
$|x-y|$ dominates division by $\lambda$ in 
\eqref{R0 high derivs}, and we have
\begin{align}
	|\partial_\lambda^k R_0^{\pm}(\lambda^2)(x,y)|
	\les\lambda^{\frac{n-3}{2}}|x-y|^{k+\frac{1-n}{2}}
	\les \lambda^{\frac{n-3}{2}} |x-y|^{\f 12 +k} .
\end{align}
 
The bound for $E_0^\pm(\lambda)$ follows from the bounds
here and the fact that
$$
	E_0^{\pm}(\lambda)=R_0^{\pm}(\lambda^2)
	-G_0^0-\lambda^2 
	G_1^0-\dots-\lambda^{n-4}
	G_{\frac{n}{2}-2}^0.
$$
For these terms, we note that for 
$\lambda|x-y|\gtrsim 1$ and $j\leq \frac{n}{2}-2$ we
have
\begin{align}\label{Gj0 derivs}
	|\partial_\lambda^k\lambda^{2j}G_{j}^0|
	\les \left\{\begin{array}{ll}
	\lambda^{2j-k}|x-y|^{2-n-2j} & k<2j\\
	0 & k\geq 2j\end{array}
	\right. \les \lambda^{n-2-k}.
\end{align}

For the other error terms, we note that
\begin{align*}
	E_1^{\pm}(\lambda)&=E_0^\pm(\lambda)
	+g_1^{\pm}(\lambda)G_{n-2}^c
	+\lambda^{n-2} G_{n-2}^r,\\
	E_2^{\pm}(\lambda)&=E_1^\pm(\lambda)
	+g_2^{\pm}(\lambda) G_n^c
	+\lambda^n G_n^r,\\
	E_3^{\pm}(\lambda)&=E_2^\pm(\lambda)
	+g_3^{\pm}(\lambda) G_{n+2}^c
	+\lambda^{n+2} G_{n+2}^r,
\end{align*}
For these terms, using \eqref{g1 comb}, we note that
when $\lambda|x-y|\gtrsim 1$,
$$
	\lambda^{n-2}G_{n-2}^r+g_1^{\pm}(\lambda)G_{n-2}^c
	=\lambda^{n-2}(A_1^{\pm}+A_2\log(\lambda|x-y|))=
	|x-y|^{0+}\widetilde O(\lambda^{n-2+}).
$$
Similarly, using \eqref{g2 comb},
$$
	\lambda^{n}G_{n}^r+g_2^{\pm}(\lambda)G_{n}^c
	=\lambda^{n}|x-y|^2(B_1^{\pm}+B_2\log(\lambda|x-y|))=
	|x-y|^{2+}\widetilde O(\lambda^{n+}),
$$
and using \eqref{g3 comb}
$$
	\lambda^{n+2}G_{n+2}^r+g_3^{\pm}(\lambda)G_{n+2}^c
	=\lambda^{n+2}|x-y|^4(C_1^{\pm}
	+C_2\log(\lambda|x-y|))=
	|x-y|^{4+}\widetilde O(\lambda^{n+2+}).
$$
Finally, we note that for $\lambda|x-y|\gtrsim 1$,
it is acceptable to multiply upper bounds by powers of $\lambda|x-y|$ .
For $E_j^\pm(\lambda)$, $j=1,2$, we note that
for $\alpha\geq 0$ we have,
\begin{align}
	|\partial_\lambda^k R_0^{\pm}(\lambda^2)(x,y)|
	& \les (\lambda |x-y|)^{\alpha} \left\{\begin{array}{ll} \lambda^{n-2-k} 
	 & k\leq \f n2 -1\\
	\lambda^{\frac{n-3}{2}} |x-y|^{\f 12 +k} & k\geq \f n2
	\end{array}\right.
\end{align}
The bounds then
follow from selecting different values of $\alpha$.

\end{proof}

\begin{corollary}\label{cor:R0exp}

	We have the expansion
	\begin{multline*}
		R_0^{\pm}(\lambda^2)(x,y)=
		G_0^0+\lambda^2 G_1^0+\dots+\lambda^{n-4}
		G_{\frac{n}{2}-2}^0+
		g_1^\pm(\lambda)G_{n-2}^c
		+\lambda^{n-2} G_{n-2}^r\\+
		|x-y|^{\frac{1}{2}+\alpha} \widetilde O_{\f n2}(\lambda^{n-\f32+\alpha})
	\end{multline*}
	for $0\leq \alpha <\f 32$.

\end{corollary}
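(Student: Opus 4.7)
The plan is to obtain this corollary as a mild refinement of Lemma~\ref{lem:R0exp}. After subtracting the listed terms, what remains is exactly the error $E_1^\pm(\lambda)$ of the lemma. For $k\leq \f n2-1$ derivatives, the desired bound $|x-y|^{1/2+\alpha}\lambda^{n-3/2+\alpha-k}$ follows immediately from the lemma with the choice $\ell = 1/2+\alpha$, which lies in the permitted range $(0,2)$ precisely when $\alpha\in[0,3/2)$. Thus only the additional derivative $k=\f n2$ requires new work.

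For $k=\f n2$ I split into the low- and high-frequency regimes used in the proof of Lemma~\ref{lem:R0exp}. In the high-frequency regime $\lambda|x-y|\gtrsim 1$, the estimate $|\partial_\lambda^{n/2}R_0^\pm(\lambda^2)|\les\lambda^{(n-3)/2}|x-y|^{1/2}$ available from \eqref{R0 high derivs} can be multiplied by the factor $(\lambda|x-y|)^{\alpha}\geq 1$ to produce exactly $|x-y|^{1/2+\alpha}\lambda^{(n-3)/2+\alpha}$, as desired. The subtracted polynomial terms $\lambda^{2j}G_j^0$ for $j\leq \f n2-2$ and the combined logarithmic term $g_1^\pm(\lambda)G_{n-2}^c+\lambda^{n-2}G_{n-2}^r$ each yield strictly smaller contributions in this regime, as in the lemma's proof via \eqref{Gj0 derivs} and \eqref{g1 comb}.

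In the low-frequency regime $\lambda|x-y|\ll 1$, I invoke the finer decomposition $E_1^\pm(\lambda)=g_2^\pm(\lambda)G_n^c+\lambda^n G_n^r + E_2^\pm(\lambda)$ from Lemma~\ref{lem:R0exp} together with \eqref{g2 comb}, so that the first two pieces combine as $\lambda^n|x-y|^2(B_1^\pm+B_2\log(\lambda|x-y|))$. Applying $\partial_\lambda^{n/2}$ and invoking \eqref{eq:log-} gives a bound of the form $\lambda^{n/2}|x-y|^2\bigl(1+|\log\lambda|+\log^-|x-y|\bigr)$. Writing
\[
\lambda^{n/2}|x-y|^2 = |x-y|^{1/2+\alpha}\lambda^{n/2-3/2+\alpha}\cdot(\lambda|x-y|)^{3/2-\alpha}
\]
and absorbing the logarithmic factors into a slightly smaller power $(\lambda|x-y|)^{3/2-\alpha-\epsilon}$ for some $\epsilon>0$ (possible because $\alpha<3/2$ strictly) delivers the claimed bound. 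The remainder $E_2^\pm$, bounded by $|x-y|^{2+\ell}\widetilde O_{n/2+1}(\lambda^{n+\ell})$ for any $\ell\in(0,2)$, is controlled by the same mechanism after choosing $\ell$ so that $3/2+\ell-\alpha\geq 0$.

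The main technical subtlety is the absorption of the logarithmic factors produced by differentiating $\lambda^n\log\lambda$ up to $\f n2$ times in the low-frequency regime; this is precisely what forces the strict upper bound $\alpha<3/2$ in the statement.
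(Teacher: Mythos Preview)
Your overall strategy is correct and is essentially the implicit argument the paper has in mind: reduce to $E_1^\pm(\lambda)$, use Lemma~\ref{lem:R0exp} with $\ell=\tfrac12+\alpha$ for $k\le \tfrac n2-1$ derivatives, and for $k=\tfrac n2$ treat the two regimes separately, multiplying by $(\lambda|x-y|)^\alpha$ in the high-frequency regime and using the finer decomposition through $E_2^\pm$ in the low-frequency regime.

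There is, however, a small but genuine slip in your low-frequency argument. After differentiating the piece \eqref{g2 comb} you correctly arrive at a quantity bounded by
\[
\lambda^{n/2}|x-y|^2\bigl(1+|\log(\lambda|x-y|)|\bigr).
\]
At this point you invoke \eqref{eq:log-} to replace the logarithm by $1+|\log\lambda|+\log^-|x-y|$ and then try to absorb it into $(\lambda|x-y|)^{3/2-\alpha-\epsilon}$. This absorption fails when $|x-y|\gg 1$: take $\lambda|x-y|$ close to $1$ and $\lambda\to 0$, so that $(\lambda|x-y|)^{3/2-\alpha-\epsilon}\approx 1$ while $|\log\lambda|\to\infty$ and $\log^-|x-y|=0$. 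The product is unbounded.

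The fix is simply not to split the logarithm. Keep it as $\log(\lambda|x-y|)$ and write
\[
\lambda^{n/2}|x-y|^2\bigl(1+|\log(\lambda|x-y|)|\bigr)
=|x-y|^{1/2+\alpha}\lambda^{(n-3)/2+\alpha}\cdot(\lambda|x-y|)^{3/2-\alpha}\bigl(1+|\log(\lambda|x-y|)|\bigr).
\]
Since $z^{3/2-\alpha}(1+|\log z|)$ is bounded for $z=\lambda|x-y|\ll 1$ and $\alpha<\tfrac32$, the desired estimate follows directly. The same remark applies to the $E_2^\pm$ piece, where no logarithm appears and your argument is fine as written; the restriction $\alpha<\tfrac32$ is what makes the exponent $3/2-\alpha$ (and $3/2-\alpha+\ell$) strictly positive.
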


The hypotheses of the lemma below are not optimal, but
suffice for our purposes.

\begin{lemma}\label{lem:iterated}

	If $|V(x)|\les \la x\ra^{-\frac{n+1}{2}-}$, $\sigma>\f12$ and 
	$\kappa\geq \frac{n-3}{4}$, then
	\begin{align*}
		\| (R_0^\pm(\lambda)^2V)^{\kappa-1}(y,\cdot)R_0(\cdot, x)\|_{L^{2,-\sigma}_y}
		\les \la \lambda \ra^{\kappa\frac{n-3}{2}}.
	\end{align*}
	uniformly in $x$.

\end{lemma}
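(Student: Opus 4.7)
The plan is to combine pointwise bounds on the individual resolvent factors with an iterated weighted-convolution estimate. First I would use the asymptotic expansion of the Hankel function \eqref{Jn asymp} (and its analog for $Y_{\f n2-1}$) to obtain the pointwise estimate $|R_0^{\pm}(\lambda^2)(z,w)|\les \lambda^{(n-3)/2}|z-w|^{-(n-1)/2}$ in the high-frequency region $\lambda|z-w|\gtrsim 1$, together with the low-frequency bound $|R_0^{\pm}(\lambda^2)(z,w)|\les |z-w|^{2-n}$ from Lemma~\ref{lem:R0exp} elsewhere. Applied to each of the $\kappa$ resolvent factors in the integral kernel $(R_0^\pm V)^{\kappa-1}R_0^\pm(y,x)$, the high-frequency bound contributes the prefactor $\la\lambda\ra^{\kappa(n-3)/2}$; low-frequency contributions, where each pair of resolvent factors produces the $|z-w|^{2-n}$ singularity in a ball of radius $\lambda^{-1}$, are absorbed into the same prefactor by matching the two bounds at the critical scale $|z-w|\sim\lambda^{-1}$.

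Having extracted this prefactor, the problem reduces to bounding, uniformly in $x$, an iterated weighted Riesz convolution
\begin{align*}
K_\kappa(y,x)=\int\cdots\int\prod_{i=1}^{\kappa-1}|V(z_i)|\prod_{i=0}^{\kappa-1}|z_i-z_{i+1}|^{-(n-1)/2}\,dz_1\cdots dz_{\kappa-1}
\end{align*}
with $z_0=y$, $z_\kappa=x$, in the $L^{2,-\sigma}_y$ norm. I would work from the innermost integral outward, using a mixture of Hardy--Littlewood--Sobolev and Schur-type tests: at each stage, convolution against the Riesz kernel $|z|^{-(n-1)/2}$ (a fractional integral of order $(n+1)/2$) followed by multiplication by $|V(z)|\les \la z\ra^{-(n+1)/2-}$ improves the integrability class of the running function. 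For the complementary low-energy regime $\lambda\leq 1$, where $\la\lambda\ra^{\kappa(n-3)/2}\sim 1$, I would use Lemma~\ref{lem:R0exp} to replace each $R_0^\pm$ by the Green's function $G_0^0$ and apply the analogous iterated HLS bound for the Riesz potential of order $2$.

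The principal technical obstacle is the bookkeeping for this iterated convolution: since the kernel $|z|^{-(n-1)/2}$ is not globally convolvable with itself (the exponents satisfy $(n-1)/2+(n-1)/2<n$, so the integrand fails to decay at infinity), all convergence must be supplied by the $V$ factors interspersed between the resolvent kernels. The precise hypothesis $\kappa\geq(n-3)/4$ is what guarantees that the composed kernel $K_\kappa(\cdot,x)$ lands in $L^{2,-\sigma}_y$ with norm uniform in $x$; tracking this inductively at each iteration stage, and splicing together the high- and low-frequency pointwise bounds smoothly across the scale $|z-w|\sim \lambda^{-1}$, is where the main care is required.
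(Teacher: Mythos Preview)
Your approach is essentially the paper's: the paper combines your two regimes into the single pointwise bound
\[
|R_0^\pm(\lambda^2)(x,y)|\ \les\ |x-y|^{2-n}+\lambda^{\frac{n-3}{2}}|x-y|^{-\frac{n-1}{2}},
\]
valid for all $\lambda,x,y$, and then iterates Lemma~\ref{EG:Lem} (a pointwise weighted Riesz-convolution estimate, which plays exactly the role of your HLS/Schur step) to control the resulting spatial integrals. One small caveat in your plan: for $\lambda\leq 1$ you cannot globally replace $R_0^\pm$ by $G_0^0$, since the slower $|x-y|^{-(n-1)/2}$ tail persists once $|x-y|\gtrsim\lambda^{-1}$; working with the sum bound throughout, as the paper does, sidesteps both this issue and the high/low ``splicing'' you anticipate.
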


\begin{proof}

	We note the bound
	$$
		|R_0^\pm(\lambda^2)(x,y)|\les \frac{1}{|x-y|^{n-2}}+
		\frac{\lambda^{\frac{n-3}{2}}}
		{|x-y|^{\frac{n-1}{2}}},
	$$
	which follows from the asymptotic expansion 
	\eqref{R0 high} when $\lambda|x-y|\gtrsim 1$
	and the fact that $|R_0^\pm|\les |G_0^0|\les |x-y|^{2-n}$ for $\lambda|x-y|\ll 1$.  
	The proof follows as in Lemma~2.2 in the odd dimensional
	case, \cite{GGodd}, by repeated use of Lemma~\ref{EG:Lem}.

\end{proof}

We use the symmetric resolvent identity, which 
is valid for $\Im(\lambda)>0$,
\begin{align}\label{symmresid}
	R_V^\pm(\lambda^2)=  R_0^\pm(\lambda^2)-R_0^\pm(\lambda^2)vM^\pm(\lambda)^{-1}vR_0^\pm(\lambda^2),
\end{align}
with $U$ the sign of $V$, $v=|V|^{1/2}$, and $w=Uv$. 
We need to invert
$$
M^{\pm}(\lambda)=U+vR_0^{\pm}(\lambda^2)v
$$
as an operator on $L^2(\R^n)$.

Lemma~\ref{lem:iterated} allows us to make sense of
the symmetric
resolvent identity, provided 
$|V(x)|\les \la x\ra^{-\frac{n-1}{2}-}$,
by iterating the standard resolvent identity
$$
	R_V^{\pm}(\lambda^2)=R_0^{\pm}(\lambda^2)-
	R_0^{\pm}(\lambda^2)VR_V^{\pm}(\lambda^2)
	=R_0^{\pm}(\lambda^2)-
	R_V^{\pm}(\lambda^2)VR_0^{\pm}(\lambda^2)
$$
at least $\frac{n-3}{4}$ times 
on both sides of $M^{\pm}(\lambda)^{-1}$ in 
\eqref{symmresid} to get to a polynomially weighted $L^2$ space, which multiplication by $v$ then maps into $L^2$.

In contrast to the odd dimensional case, \cite{GGodd},
the expansions for the free resolvent 
in Lemma~\ref{lem:R0exp}
are useful for understanding the operators $M^\pm(\lambda)^{-1}$, but
more care is required for the dispersive estimates.
The logarithmic nature of the resolvent causes certain
technical difficulties, see Sections~\ref{sec:BS}
and \ref{sec:sing}.  

Our main tool used to invert 
$M^\pm(\lambda)=U+vR_0^\pm(\lambda^2)v$  
for small $\lambda$  is the
following  lemma (see Lemma 2.1  in \cite{JN}).
\begin{lemma}\label{JNlemma}
Let $A$ be a closed operator on a Hilbert space $\mathcal{H}$ and $S$ a projection. Suppose $A+S$ has a bounded
inverse. Then $A$ has a bounded inverse if and only if
$$
B:=S-S(A+S)^{-1}S
$$
has a bounded inverse in $S\mathcal{H}$, and in this case
$$
A^{-1}=(A+S)^{-1}+(A+S)^{-1}SB^{-1}S(A+S)^{-1}.
$$
\end{lemma}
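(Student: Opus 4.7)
My plan is to verify the lemma by direct algebraic manipulation, which is really all that is needed since the statement is an operator-valued Schur complement identity. Throughout, abbreviate $C := (A+S)^{-1}$, which exists by hypothesis. The workhorse identities come from $(A+S)C = C(A+S) = I$, which immediately give
$$AC = I - SC, \qquad CA = I - CS.$$
Multiplying the first on the right by $S$ and using $S^2 = S$ yields $ACS = S - SCS = B$, interpreted as a map with range in $S\mathcal{H}$; symmetrically $SCA = B$.

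For the sufficiency direction, assume $B$ has a bounded inverse on $S\mathcal{H}$ and define the candidate
$$T := C + CSB^{-1}SC.$$
A direct computation gives $AT = (I - SC) + (ACS)\, B^{-1} SC = (I - SC) + B B^{-1} SC = I$, where the cancellation uses that the range of $SC$ lies in $S\mathcal{H}$, so $BB^{-1}$ acts as the identity there. The symmetric calculation using $CA = I - CS$ and $SCA = B$ yields $TA = I$. Since $C$, $B^{-1}$, and $S$ are bounded, so is $T$, and hence $A^{-1} = T$ as claimed.

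For the necessity direction, assume $A^{-1}$ exists and is bounded. Multiplying $AC = I - SC$ on the left by $A^{-1}$ yields $A^{-1} = C + CSA^{-1}$; sandwiching this identity between $S$'s and using $S^2 = S$ gives
$$SA^{-1}S = SCS + (SCS)(SA^{-1}S)$$
as operators on $S\mathcal{H}$. Since $S$ is the identity on $S\mathcal{H}$, we have $B = I_{S\mathcal{H}} - SCS$ there, so rearranging produces $B\bigl(I_{S\mathcal{H}} + SA^{-1}S\bigr) = I_{S\mathcal{H}}$. Starting from the companion identity $A^{-1} = C + A^{-1}SC$ (obtained by multiplying $CA = I - CS$ on the right by $A^{-1}$) gives the matching left inverse, so $B^{-1} = I_{S\mathcal{H}} + SA^{-1}S$ is bounded. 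Substituting this expression into $T$ recovers the asserted formula for $A^{-1}$.

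The proof is essentially formal; the only point requiring care is the bookkeeping around the restriction $B : S\mathcal{H} \to S\mathcal{H}$, since $S$ need not be an orthogonal projection, and one must consistently view $B = S - SCS$ as $I_{S\mathcal{H}} - SCS$ on the range of $S$ while re-extending $B^{-1}$ to $\mathcal{H}$ as $SB^{-1}S$. Closedness of $A$ plays no role beyond ensuring that $A+S$ is a reasonable operator to invert; once $(A+S)^{-1}$ is bounded, every subsequent composition is a bounded operator on $\mathcal{H}$, so the bounded invertibility of $A$ and that of $B$ on $S\mathcal{H}$ are genuinely equivalent.
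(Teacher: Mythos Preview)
The paper does not prove this lemma; it is quoted verbatim from Jensen--Nenciu \cite{JN} (their Lemma~2.1) and used as a black box. Your direct algebraic verification is correct and is essentially the standard Schur-complement argument. One trivial slip: you have the sources of the two companion identities swapped---multiplying $AC = I - SC$ on the left by $A^{-1}$ gives $A^{-1} = C + A^{-1}SC$, while multiplying $CA = I - CS$ on the right by $A^{-1}$ gives $A^{-1} = C + CSA^{-1}$---but since both identities are valid the argument is unaffected.
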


We use the following terminology.

\begin{defin}
	We say an operator $K:L^2(\R^n)\to L^2(\R^n)$ with kernel
	$K(\cdot,\cdot)$ is absolutely bounded if the operator with kernel
	$|K(\cdot,\cdot)|$ is bounded from $L^2(\R^n)$ to $L^2(\R^n)$.
\end{defin}

We recall the definition of the Hilbert-Schmidt norm
of an operator $K$ with integral kernel
$K(x,y)$ ,
\begin{align*}
	\|K\|_{HS}=\bigg(\iint_{\R^{2n}} |K(x,y)|^2\, dx\, dy
	\bigg)^{\f12}.
\end{align*}
We note that Hilbert-Schmidt and finite rank operators
are immediately absolutely bounded.

\begin{lemma}\label{lem:Mexp}

	Assuming that $v(x)\les \la x\ra^{-\beta}$.
	If $\beta>\frac{n}{2}+\ell$ for any
	$0< \ell<2$, then we have
	\begin{align}
		M^{\pm}(\lambda)
		&=U+vG_0^0v+
		\sum_{j=1}^{\frac{n-4}{2}}
		\lambda^{2j}vG_{j}^0v+g_1^\pm(\lambda)vG_{n-2}^c v
		+\lambda^{n-2}vG_{n-2}^r v+
		M_0^\pm(\lambda),\label{M0exp}
	\end{align}
	Where  the 
	operators $G_{j}^0$, $G_j^r$ and $G_j^c$ are absolutely bounded
	with real-valued kernels.
	Further,
	\begin{align}\label{M0err}
		\sum_{j=0}^{\frac{n}{2}-1} \| 
		\sup_{0<\lambda<\lambda_1} \lambda^{j+2-n-\ell}
		\partial_\lambda^j M_0^{\pm}(\lambda)\|_{HS}
		\les 1.
	\end{align}
	If $\beta>\frac{n}{2}+2+\ell$, for $0< \ell<2$, 
	then
	\begin{align}\label{M0exp2}
		M_0^{\pm}(\lambda)=g_2^\pm(\lambda)vG_{n}^c v
		+\lambda^{n}vG_{n}^r v
		+M_1^{\pm}(\lambda),
	\end{align}
	with
	\begin{align}\label{M1err}
		\sum_{j=0}^{\frac{n}{2}} \| 
		\sup_{0<\lambda<\lambda_1} \lambda^{j-n-\ell}
		\partial_\lambda^j M_1^{\pm}(\lambda)\|_{HS}
		\les 1.
	\end{align}
	If $\beta>\frac{n}{2}+4+\ell$, then for $0< \ell<2$
	\begin{align}\label{M1exp}
		M_1^{\pm}(\lambda)=g_3^\pm(\lambda)vG_{n+2}^c v
		+\lambda^{n+2}vG_{n+2}^r v
		+M_2^{\pm}(\lambda)
	\end{align}
	with
	\begin{align}\label{M2err}
		\sum_{j=0}^{\frac{n}{2}} \| 
		\sup_{0<\lambda<\lambda_1} \lambda^{j-2-n-\ell}
		\partial_\lambda^j M_2^{\pm}(\lambda)\|_{HS}
		\les 1.
	\end{align}

\end{lemma}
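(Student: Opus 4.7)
The plan is to sandwich the free-resolvent expansion of Lemma \ref{lem:R0exp} between two factors of $v$. Writing $R_0^\pm(\lambda^2) = G_0^0 + \lambda^2 G_1^0 + \cdots + \lambda^{n-4} G_{n/2-2}^0 + g_1^\pm(\lambda)G_{n-2}^c + \lambda^{n-2}G_{n-2}^r + E_1^\pm(\lambda)$ and multiplying on left and right by $v$ produces the explicit part of \eqref{M0exp} together with $M_0^\pm(\lambda) = vE_1^\pm(\lambda)v$. The subsequent identities \eqref{M0exp2} and \eqref{M1exp} follow identically by substituting the chain of error identities for $E_1^\pm, E_2^\pm, E_3^\pm$ from Lemma \ref{lem:R0exp}.

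For absolute boundedness of the explicit coefficient operators, each of $vG_j^0 v$, $vG_j^c v$, and $vG_j^r v$ has a kernel pointwise dominated by $v(x)|x-y|^{2+2j-n}(1+\log^- |x-y|)v(y)$ (with $j\in\{0,\dots, n/2-2,\frac{n-2}{2},\frac{n}{2},\frac{n+2}{2}\}$ appropriately rescaled). These are integral operators with real-valued, symmetric kernels (by \eqref{Gj0 defn}, \eqref{Gjreal}, \eqref{Gjcomp}), and a direct Hilbert--Schmidt computation gives finiteness under the stated decay of $v$; in particular all are absolutely bounded.

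The three error bounds \eqref{M0err}, \eqref{M1err}, \eqref{M2err} are proved by the same mechanism. Using the $\widetilde{O}_k$ notation of Lemma \ref{lem:R0exp}, the $i$-th $\lambda$-derivative of the kernel of $E_{j+1}^\pm(\lambda)$ satisfies the pointwise bound $|\partial_\lambda^i E_{j+1}^\pm(\lambda)(x,y)| \lesssim |x-y|^{2j+\ell}\lambda^{n-2+2j+\ell-i}$ for $0\le i\le n/2-1+2j$. Consequently
\begin{equation*}
\sup_{0<\lambda<\lambda_1}\lambda^{i+2-n-2j-\ell}\bigl|\partial_\lambda^i M_j^\pm(\lambda)(x,y)\bigr| \lesssim v(x)\,|x-y|^{2j+\ell}\,v(y),
\end{equation*}
and the Hilbert--Schmidt norm squared of the right-hand side is the double integral
\begin{equation*}
\iint_{\R^{2n}}\langle x\rangle^{-2\beta}|x-y|^{2(2j+\ell)}\langle y\rangle^{-2\beta}\,dx\,dy,
\end{equation*}
which converges precisely when $\beta>\tfrac{n}{2}+2j+\ell$. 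Taking the supremum inside the HS norm is legitimate because the majorant kernel on the right is $\lambda$-independent, and summing over $0\le i\le n/2-1+2j$ yields the stated bound.

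The main obstacle is bookkeeping the logarithmic contributions when we single out the $g_j^\pm(\lambda)$ coefficients: naively differentiating $\lambda^{n-2}\log\lambda$ produces powers of $\lambda$ lower than $n-2-j$. This is already absorbed into Lemma \ref{lem:R0exp} by pulling the $g_j^\pm(\lambda) G^c$ and $\lambda^{\cdot} G^r$ terms out \emph{explicitly}, so that the remaining $E_{j+1}^\pm(\lambda)$ is log-free in its $\lambda$-scaling and lies in the clean $\widetilde O$-class stated there. Once this is accepted, the rest is the routine Hilbert--Schmidt calculation above.
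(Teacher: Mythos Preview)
Your proof is correct and follows essentially the same approach as the paper: sandwich the free-resolvent expansion of Lemma~\ref{lem:R0exp} between two factors of $v$, identify $M_j^\pm = vE_{j+1}^\pm v$, and verify the Hilbert--Schmidt bounds on the remainders from the pointwise $\widetilde O$-estimates given there. One minor caveat: your blanket claim that ``a direct Hilbert--Schmidt computation gives finiteness'' for \emph{all} the explicit coefficient operators is not accurate for the locally singular ones such as $vG_0^0v$, whose kernel $v(x)|x-y|^{2-n}v(y)$ is not square-integrable near the diagonal once $n\ge 4$; those operators are absolutely bounded by other means (e.g.\ Hardy--Littlewood--Sobolev combined with H\"older, as the kernel has a fixed sign). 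This does not affect the error-term argument, which is the real content of the lemma, and the paper's own proof likewise does not spell out absolute boundedness for those operators.
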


\begin{proof}

	The proof follows from the definition of the operators
	$M^{\pm}(\lambda)$ and the expansion for the free
	resolvent in Lemma~\ref{lem:R0exp}.  The bound on the
	error terms follows from the fact 
	that if $k>-\frac{n}{2}$ then
	$\la x\ra^{-\beta}|x-y|^k(1+\log|x-y|) \la y\ra^{-\beta}$ is bounded in Hilbert-Schmidt norm.
	To see this we note that the kernel is bounded by the sum
	$\la x\ra^{-\beta}|x-y|^{k+}\la y\ra^{-\beta}+\la x\ra^{-\beta}|x-y|^{k-} \la y\ra^{-\beta}$
	which are
	Hilbert-Schmidt provided $\beta>\frac{n}{2}+k$.  

\end{proof}

\begin{rmk}

	The error estimates here can be more compactly
	summarized as
	\begin{align*}
		M_0^{\pm}(\lambda)=\widetilde O_{\frac{n}{2}-1}
		(\lambda^{n-2+\ell}), \qquad
		M_1^{\pm}(\lambda)=\widetilde O_{\frac{n}{2}}
		(\lambda^{n+\ell}), \qquad
		M_2^{\pm}(\lambda)=\widetilde O_{\frac{n}{2}}
		(\lambda^{n+2+\ell})
	\end{align*}
	as absolutely bounded operators on $L^2(\R^n)$, 
	for $0<\lambda<\lambda_1$.

\end{rmk}

We note that $U+vG_0^0v$ is not invertible if there
is an eigenvalue at zero, see 
Lemma~\ref{S characterization}.
Define $S_1$ to be the Riesz projection onto the 
kernel of $U+vG_0^0v$ as an operator on $L^2(\R^n)$.  
Then the operator $U+vG_0^0v+S_1$ is invertible on 
$L^2$,  and we may define
\begin{align}
	D_0:=(U+vG_0^0v+S_1)^{-1}.
\end{align}
We note that
$U+vG_0^0v$ is a compact perturbation of the
invertible operator $U$, hence $S_1$ is finite rank by the
Fredholm alternative.
This operator can be seen to be absolutely bounded
exactly as in the odd dimensional case, see 
Lemma~2.7 in \cite{GGodd}.  

\begin{lemma}\label{lem:D0bdd}

	If $v(x)\les \la x\ra^{-\frac{n+1}{2}-}$, then
	the operator $D_0$ is absolutely bounded in $L^2(\mathbb R^n)$.

\end{lemma}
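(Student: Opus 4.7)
The plan is to establish absolute boundedness of $D_0$ via a Neumann-type resolvent expansion, reducing to the observation that the constituent operators $U$, $S_1$, and $vG_0^0v$ are each absolutely bounded on $L^2(\R^n)$, together with a Hilbert--Schmidt tail estimate.

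First I would verify absolute boundedness of each building block. The sign multiplication $U$ is trivially absolutely bounded since $|U|\equiv 1$. The projection $S_1$ is absolutely bounded because it is finite rank (being the Riesz projection onto the finite-dimensional kernel of the Fredholm operator $U+vG_0^0v$); writing $S_1=\sum_i \phi_i\otimes\psi_i$ makes its absolute kernel $\sum_i|\phi_i(x)||\psi_i(y)|$ obviously $L^2$-bounded. The operator $vG_0^0v$ has the non-negative kernel $c_n v(x)|x-y|^{2-n}v(y)$, so its absolute value coincides with itself; hence absolute boundedness is equivalent to $L^2$-boundedness, which follows from Hardy--Littlewood--Sobolev using $v\in L^n(\R^n)$ under the decay hypothesis $v\les \la x\ra^{-(n+1)/2-}$.

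Using $U^2=I$, I would factor
\[
U+vG_0^0v+S_1 = U\bigl(I+U(vG_0^0v+S_1)\bigr),
\]
so that $D_0 = \bigl(I+U(vG_0^0v+S_1)\bigr)^{-1}U$. Writing $B:=U(vG_0^0v+S_1)$, which is absolutely bounded by the above, and iterating $(I+B)^{-1} = I - B(I+B)^{-1}$ yields, for each $k\ge 1$,
\[
(I+B)^{-1} = \sum_{j=0}^{k-1}(-B)^j + (-B)^k(I+B)^{-1}.
\]
The partial sum is absolutely bounded since composition of absolutely bounded operators is absolutely bounded: the kernel of a composition is dominated pointwise by the composition of the absolute kernels.

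The remaining challenge---which I expect to be the main obstacle---is showing that the remainder $(-B)^k(I+B)^{-1}$ is absolutely bounded. The plan is to choose $k$ large enough that $B^k$ is Hilbert--Schmidt. The kernel of $(vG_0^0v)^k$ is an iterated integral in which each factor $G_0^0$ contributes $|x_{i-1}-x_i|^{2-n}$ convolved against the weight $v(x_i)^2\les \la x_i\ra^{-(n+1)-}$; standard convolution estimates then show the resulting kernel behaves near the diagonal like $v(x)v(y)|x-y|^{2k-n}$ and becomes smoother as $k$ grows, so that for $k$ exceeding a threshold depending only on $n$, its square is integrable on $\R^{2n}$. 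Finite-rank contributions arising from the $S_1$ terms in $B^k$ are automatically Hilbert--Schmidt. Since Hilbert--Schmidt operators form a two-sided ideal and $(I+B)^{-1}$ is bounded on $L^2$ by construction of $D_0$, the remainder is Hilbert--Schmidt, hence absolutely bounded. Finally, right-composition with $U$ preserves absolute boundedness and yields the claim for $D_0$.
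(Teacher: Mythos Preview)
Your proposal is correct and follows the standard approach: the paper does not give a proof here but refers to Lemma~2.7 of the companion paper \cite{GGodd}, whose argument proceeds along exactly the lines you describe---resolvent iteration using $U^2=I$, absolute boundedness of the finite Neumann sum, and a Hilbert--Schmidt estimate on a sufficiently high power of $vG_0^0v$ to control the tail. The iterated-kernel smoothing you invoke is precisely what Lemma~\ref{EG:Lem} in the present paper provides.
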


We will apply Lemma~\ref{JNlemma} with $A=M^\pm(\lambda)$ and $S=S_1$,
the Riesz projection onto the kernel of
$U+vG_0^0v$. Thus, we need to show that $M^{\pm}(\lambda)+S_1$
has a bounded inverse in $L^2(\mathbb R^n)$ and
\begin{align}\label{B defn}
  B_{\pm}(\lambda) =S_1-S_1(M^\pm(\lambda)+S_1)^{-1}S_1
\end{align}
has a bounded inverse in $S_1L^2(\mathbb R^n)$.

\begin{lemma}\label{M+S inverse}
	Suppose that zero is not a regular point of the
	spectrum of $H=-\Delta+V$, and let $S_1$ be the
	corresponding Riesz projection on the the zero
	energy eigenspace.  The for sufficiently small
	$\lambda_1>0$, the operators $M^\pm(\lambda)+S_1$
	are invertible for all $0<\lambda<\lambda_1$ as
	bounded operators on $L^2(\R^n)$.  Further,
	for any $0< \ell< 2$, if $\beta>\frac{n}{2}+\ell$
	then we have the following expansions.
	\begin{align*}
		(M^{\pm}(\lambda)+S_1)^{-1}
		&=D_0+\sum_{j=1}^{\frac{n-4}{2}}
		\lambda^{2j}C_{2j}-g_1^\pm(\lambda) 
		D_0vG_{n-2}^c vD_0
		+\lambda^{n-2} C_{n-2}
		+\widetilde M_0^\pm(\lambda)
	\end{align*}
	where $\widetilde M_0^{\pm}(\lambda)$ satisfies the
	same bounds as $M_0^{\pm}(\lambda)$ and the operators
	$C_{k}$ are absolutely bounded on $L^2$ with
	real-valued kernels.
	Further, if $\beta>\frac{n}{2}+2+\ell$ then	
	\begin{align*}
		\widetilde M_0^{\pm}(\lambda)=-
		g_2^\pm(\lambda)D_0vG_n^cvD_0+\lambda^2 g_1^\pm
		(\lambda) C_n^1+
		\lambda^{n}C_{n} +
		\widetilde M_1^\pm (\lambda)
	\end{align*}
	where 
	$C_n^1= D_0vG_{n-2}^cvD_0vG_1^0vD_0
	+D_0vG_1^0vD_0vG_{n-2}^cvD_0$, and
	$\widetilde M_1^{\pm}(\lambda)$ satisfies the
	same bounds as $M_1^{\pm}(\lambda)$.
	Finally, if $\beta>\frac{n}{2}+4+\ell$ then	
	\begin{align*}
		\widetilde M_1^{\pm}(\lambda)=
		-g_3^\pm(\lambda)D_0vG_{n+2}^cvD_0
		+\lambda^2 g_2^\pm
		(\lambda) C_{n+2}^1+\lambda^4 g_1^\pm(\lambda)
		C_{n+2}^2+
		\lambda^{n+2}C_{n+2} +
		\widetilde M_2^\pm (\lambda)
	\end{align*}
	with $C_{n+2}, C_{n+2}^1, C_{n+2}^2$ absolutely
	bounded operators with real-valued kernels and
	$\widetilde M_{2}^\pm(\lambda)$ satisfies the
	same bounds as $M_2^\pm(\lambda)$.

\end{lemma}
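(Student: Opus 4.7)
My plan is to reduce the inversion of $M^\pm(\lambda)+S_1$ to a Neumann series in which the leading term is $D_0$ and the perturbation is $T^\pm(\lambda) := M^\pm(\lambda) - (U + vG_0^0 v)$. Writing $M^\pm(\lambda) + S_1 = D_0^{-1} + T^\pm(\lambda) = D_0^{-1}\bigl(I + D_0 T^\pm(\lambda)\bigr)$, Lemma~\ref{lem:Mexp} gives $\|T^\pm(\lambda)\|_{L^2\to L^2} = o(1)$ as $\lambda\to 0$, since its leading contribution $\lambda^2 vG_1^0 v$ is bounded on $L^2$ times a factor of $\lambda^2$. Choosing $\lambda_1$ small enough that $\|D_0 T^\pm(\lambda)\|_{L^2\to L^2} < \tfrac12$ for all $0 < \lambda < \lambda_1$, the Neumann series
\begin{align*}
(M^\pm(\lambda)+S_1)^{-1} = \Bigl(\sum_{k=0}^\infty (-D_0 T^\pm(\lambda))^k\Bigr) D_0
\end{align*}
converges and gives the required bounded inverse.

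To obtain the first expansion, I would substitute the expansion $T^\pm(\lambda) = \sum_{j=1}^{(n-4)/2}\lambda^{2j} vG_j^0 v + g_1^\pm(\lambda)vG_{n-2}^c v + \lambda^{n-2}vG_{n-2}^r v + M_0^\pm(\lambda)$ into the Neumann series and sort contributions by $\lambda$-size. Products of the pure $\lambda^{2j}$-pieces with total multi-index summing to $j \le (n-2)/2$ contribute a finite polynomial in $\lambda$ whose coefficient at $\lambda^{2j}$ is an absolutely bounded operator with a real-valued kernel (since $D_0$ is absolutely bounded by Lemma~\ref{lem:D0bdd} and each $G_k^0$, $G_{n-2}^r$ has a real kernel); these assemble into $C_{2j}$ and $C_{n-2}$. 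The $g_1^\pm(\lambda)$ term appears only through the first-order Neumann contribution $-D_0\,g_1^\pm(\lambda)vG_{n-2}^c v\,D_0$, because any product also carrying a second occurrence of $g_1^\pm$ or an extra $\lambda^{2k}$ factor is of size $(\log\lambda)^2\lambda^{2n-4}$ or $\lambda^{n-2+2k}\log\lambda = o(\lambda^{n-2+\ell})$ and hence absorbed into $\widetilde M_0^\pm(\lambda)$. Everything else---products involving at least one $M_0^\pm$ factor, together with the higher polynomial tails---forms $\widetilde M_0^\pm(\lambda)$, and its derivative bounds follow by the product rule from those on $M_0^\pm$ in Lemma~\ref{lem:Mexp}.

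The next two expansions follow by inserting successively the finer splittings $M_0^\pm = g_2^\pm vG_n^c v + \lambda^n vG_n^r v + M_1^\pm$ and $M_1^\pm = g_3^\pm vG_{n+2}^c v + \lambda^{n+2} vG_{n+2}^r v + M_2^\pm$ into the same Neumann series. The leading $g_2^\pm$-contribution is $-g_2^\pm D_0 v G_n^c v D_0$, while the cross term at order $\lambda^2 g_1^\pm$ arises at precisely the second Neumann iterate,
\begin{align*}
D_0(\lambda^2 vG_1^0 v)D_0(g_1^\pm vG_{n-2}^c v)D_0 + D_0(g_1^\pm vG_{n-2}^c v)D_0(\lambda^2 vG_1^0 v)D_0 = \lambda^2 g_1^\pm(\lambda) C_n^1,
\end{align*}
matching the stated formula. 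The analogous $\lambda^2 g_2^\pm$, $\lambda^4 g_1^\pm$ and $\lambda^{n+2}$ terms in the third expansion come from collecting the second- and third-order Neumann products whose factors have total $\lambda$-weight $n+2$. The principal obstacle is combinatorial bookkeeping: for each iterate $(D_0 T^\pm)^k$ one must identify which factor combinations land at each prescribed order in the lemma and confirm the remainder inherits the weighted Hilbert-Schmidt derivative bounds of $M_j^\pm$. The derivative count is delicate: differentiating up to $n/2$ times via the product rule produces many terms, and each must be verified to preserve either a factor of $M_j^\pm$ with its bound, or to carry enough $\lambda^2$'s to remain of size $o(\lambda^{n+2j-2+\ell})$ despite the accumulating $\log\lambda$ factors.
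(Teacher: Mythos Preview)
Your approach is correct and essentially identical to the paper's: both factor $M^\pm(\lambda)+S_1 = D_0^{-1} + T^\pm(\lambda)$, invoke a Neumann series about $D_0$, and then sort the resulting products by $\lambda$-order while isolating the $g_j^\pm(\lambda)$ contributions. The paper's proof is equally sketchy on the combinatorial bookkeeping you flag, noting only that ``for the longer expansions, one needs to use more terms in the Neumann series and take more care with `$x^2$' and `$x^3$' terms that arise,'' so your level of detail matches theirs.
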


\begin{proof}

We use a Neumann series expansion.  We show the case of
$M^+$ and omit the superscript, the `-' case follows
similarly.  Using \eqref{M0exp} we have
\begin{align*}
	(M(\lambda)&+S_1)^{-1}\\
	&=(U+vG_0v+S_1
	+\sum_{j=1}^{\frac{n-4}{2}}
	\lambda^{2j}vG_{j}^0v+g_1(\lambda)vG_{n-2}^c v
			+\lambda^{n-2}vG_{n-2}^r v+
	M_0(\lambda))^{-1}\\
	&=D_0(\mathbbm 1+\sum_{j=1}^{\frac{n-4}{2}}
	\lambda^{2j}vG_{j}^0vD_0+g_1(\lambda)vG_{n-2}^c vD_0
	+\lambda^{n-2}vG_{n-2}^r vD_0+
	M_0(\lambda)D_0)^{-1}\\
	&=D_0-\lambda^2 D_0vG_1^0vD_0
	+\sum_{j=2}^{\frac{n-4}{2}} \lambda^{2j} C_{2j}
	-g_1(\lambda)vG_{n-2}^c vD_0
	-\lambda^{n-2}vG_{n-2}^r vD_0\\
	&\qquad -D_0M_0(\lambda)D_0
	+\lambda^{2}[D_0vG_{1}^0vD_0[g_1(\lambda)vG_{n-2}^c v+\lambda^{n-2}vG_{n-2}^r +M_0(\lambda)]D_0\\
	&\qquad+D_0[g_1(\lambda)vG_{n-2}^c v+\lambda^{n-2}vG_{n-2}^r +M_0(\lambda)]D_0vG_1^0vD_0]
	+\widetilde M_2(\lambda).
\end{align*}
One can find explicitly the operators $C_{k}$ in terms of
$D_0$ and the operators $G_{k}^0$, but this is not worth
the effort.  The operator $C_2=D_0vG_1^0vD_0$ is important due to its
relationship with the projection onto the zero energy
eigenspace, see Lemma~\ref{lem:eproj}.

What is important in our analysis in Section~\ref{sec:sing} are the imaginary parts, that is the
terms that arise with the functions $g_1(\lambda)$,
$g_2(\lambda)$ or $g_3(\lambda)$.  The first of these
occurs from
\begin{align*}
	D_0[g_1(\lambda)vG_{n-2}^c v
	+\lambda^{n-2}vG_{n-2}^r v
	+M_0(\lambda)]D_0
\end{align*}
This provides an most singular term of size $\lambda^{n-2}\log\lambda$ as $\lambda\to 0$.  The
next $\lambda^n \log\lambda$ term arises from
the contribution of the $D_0vM_0(\lambda)vD_0$ 
term
or the `$x^2$' term in the Neumann series,
that is the term with both $G_1^0$ and $G_{n-2}^c$. 
The error bounds follow from the bounds in 
Lemma~\ref{lem:Mexp} and the Neumann series expansion
above.

For the longer expansions, one needs to use more terms
in the Neumann series and take more care with `$x^2$'
and `$x^3$' terms that arise.

\end{proof}

\begin{rmk}\label{rmk:reg}

	We note here that is zero is regular the above
	Lemma suffices to establish the dispersive estimates
	using the techniques in Sections~\ref{sec:BS} and \ref{sec:sing}.
	In this case, $S_1=0$, $D_0=(U+vG_0v)^{-1}$ is still absolutely
	bounded and we have the expansion
	\begin{align*}
		M^{\pm}(\lambda)^{-1}
		&=D_0+\sum_{j=1}^{\frac{n-4}{2}}
				\lambda^{2j}C_{2j}-g_1^\pm(\lambda) 
				D_0vG_{n-2}^c vD_0
				+\lambda^{n-2} C_{n-2}
				+\widetilde M_0^\pm(\lambda),
	\end{align*}
	with $C_{2j}$ real-valued, absolutely bounded operators.

\end{rmk}

Now we turn to the operators $B_{\pm}(\lambda)$ for
use in Lemma~\ref{JNlemma}.  
Recall that
$$
	B_{\pm}(\lambda)=S_1-S_1(M^\pm(\lambda)+S_1)^{-1}S_1,
$$
and that $S_1D_0=D_0S_1=S_1$.  Thus
\begin{align}
	B_{\pm}(\lambda)&=S_1-S_1[D_0
	+\sum_{j=1}^{\frac{n-4}{2}}
	\lambda^{2j}C_{2j}-g_1^\pm(\lambda) 
	D_0vG_{n-2}^c vD_0
	+\lambda^{n-2} C_{n-2}
	+\widetilde M_0^\pm(\lambda)]S_1\nn \\
	&=
	-\sum_{j=1}^{\frac{n-4}{2}}
	\lambda^{2j}S_1C_{2j}S_1+g_1^\pm(\lambda) 
	S_1vG_{n-2}^c vS_1
	-\lambda^{n-2} S_1 C_{n-2}S_1
	-S_1\widetilde M_0^\pm(\lambda)S_1\nn\\
	&=-\lambda^2 S_1vG_1^0vS_1
	-\sum_{j=2}^{\frac{n-4}{2}}
	\lambda^{2j}S_1 C_{2j}S_1+g_1^\pm(\lambda) 
		S_1vG_{n-2}^c vS_1\label{eq:Bexp} \\
	&	\qquad-\lambda^{n-2} S_1 C_{n-2}S_1
	-S_1\widetilde M_0^\pm(\lambda)S_1.	\nn
\end{align}
So that the invertibility of $B_{\pm}(\lambda)$
hinges upon the invertibility of the operator
$S_1vG_1^0vS_1$, which is
established in Lemma~\ref{D1 lemma} below.  Accordingly,
we define  $D_1:=(S_1vG_1^0vS_1)^{-1}$
as an operator on $S_1L^2$.  Noting that
$D_1=S_1D_1S_1$, it is clear that $D_1$ is absolutely
bounded.

\begin{lemma}\label{lem:Binv}

	We have the following expansions, if $\beta>\frac{n}{2}+\ell$ for $0< \ell<2$ then 
	\begin{align*}
		B_{\pm}(\lambda)^{-1}
		&=-\frac{D_1}{\lambda^2}
		+\sum_{j=2}^{\frac{n-4}{2}}
		\lambda^{2j-4}B_{2j}
		+\frac{g_1^\pm(\lambda)} 
		{\lambda^4}D_1vG_{n-2}^c vD_1
		+\lambda^{n-6} B_{n-2}
		+\widetilde B_0^\pm(\lambda)
	\end{align*}
	where $\widetilde B_0^{\pm}(\lambda)$ satisfies the
	same bounds as $\lambda^{-4} M_0^{\pm}(\lambda)$ and the operators
	$B_{k}$ are absolutely bounded on $L^2$ with
	real-valued kernels.
	Further, if $\beta>\frac{n}{2}+2+\ell$ then	
	\begin{align*}
		\widetilde B_0^{\pm}(\lambda)=
		\frac{g_2^\pm(\lambda)}{\lambda^4}D_1vG_n^cvD_1
		+\frac{g_1^\pm(\lambda)}{\lambda^2 } B_n^1
		+\lambda^{n-4}B_{n}+
		\widetilde B_1^\pm (\lambda)
	\end{align*}
	where 
	$B_n^1=D_1vG_{n-2}^cvD_0vG_1^0vD_1
	+D_1vG_1^0vD_0vG_{n-2}^cvD_1+D_1C_4D_0vG_{n-2}^cvD_1
	+D_1vG_{n-2}^cvD_0C_4D_1$, 
	and
	$\widetilde B_1^{\pm}(\lambda)$ satisfies the
	same bounds as $\lambda^{-4} M_1^{\pm}(\lambda)$.
	Finally, if $\beta>\frac{n}{2}+4+\ell$
	\begin{align*}
		\widetilde B_1^{\pm}(\lambda)=
		\frac{g_3^\pm(\lambda)}{\lambda^4}B_{n+2}^1
		+\frac{g_2^\pm(\lambda)}{\lambda^2 } B_{n+2}^2
		+g_1^\pm(\lambda) B_{n+2}^3+
		\lambda^{n-2}B_{n+2}^4+
		\widetilde B_2^\pm (\lambda)
	\end{align*}
	with $B_{n+2}^j$ absolutely bounded operators with
	real-valued kernels, and
	$\widetilde B_2^{\pm}(\lambda)$ satisfies the
	same bounds as $\lambda^{-4} M_2^{\pm}(\lambda)$.

\end{lemma}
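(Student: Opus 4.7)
The plan is to mirror the proof of Lemma~\ref{M+S inverse}: invert $B_\pm(\lambda)$ on $S_1L^2$ by a Neumann series anchored at the leading block $-\lambda^2 S_1 v G_1^0 v S_1$, whose inverse on $S_1L^2$ is $-\lambda^{-2}D_1$ by Lemma~\ref{D1 lemma}. Writing the expansion \eqref{eq:Bexp} as $B_\pm(\lambda) = -\lambda^2 S_1 v G_1^0 v S_1 + T_\pm(\lambda)$, where $T_\pm$ collects all the remaining pieces, I first observe that every component of $T_\pm$ is of size $\lambda^4$ or smaller as $\lambda\to 0^+$, using $g_1^\pm(\lambda)=\widetilde O(\lambda^{n-2}(1+|\log\lambda|))$ with $n\geq 6$. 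Hence $\lambda^{-2} D_1 T_\pm(\lambda)$ is uniformly $O(\lambda^2)$ on $(0,\lambda_1)$ for $\lambda_1$ sufficiently small, so the identity
\begin{equation*}
B_\pm(\lambda)^{-1} = \bigl(\mathbbm{1} + (-\lambda^{-2} D_1) T_\pm(\lambda)\bigr)^{-1}\bigl(-\lambda^{-2} D_1\bigr)
\end{equation*}
admits a convergent geometric expansion on $S_1L^2$.

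Collecting contributions order by order from this expansion reproduces the first formula in the statement: the zeroth term contributes $-\lambda^{-2}D_1$; the first Neumann iterate applied to the $\lambda^{2j}S_1C_{2j}S_1$ pieces of $T_\pm$ produces $\sum_{j=2}^{\frac{n-4}{2}}\lambda^{2j-4}B_{2j}$ with $B_{2j}=D_1 C_{2j} D_1$; applied to $g_1^\pm(\lambda)S_1vG_{n-2}^cvS_1$ it produces the leading imaginary correction $\lambda^{-4}g_1^\pm(\lambda)D_1vG_{n-2}^cvD_1$; and applied to $\lambda^{n-2}S_1C_{n-2}S_1$ it produces $\lambda^{n-6}B_{n-2}$. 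Every higher Neumann iterate together with the $S_1\widetilde M_0^\pm S_1$ contribution is absorbed into $\widetilde B_0^\pm(\lambda)$, whose derivative bounds in $\lambda$ follow from \eqref{M0err} multiplied by the two extra factors of $\lambda^{-2}$ arising from the bracketing by $A_0^{-1}$.

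The two refined expansions follow by substituting the improved forms \eqref{M0exp2} and \eqref{M1exp} for $\widetilde M_0^\pm$ and $\widetilde M_1^\pm$ from Lemma~\ref{M+S inverse} and iterating further. The new leading imaginary terms $\lambda^{-4}g_2^\pm(\lambda)D_1vG_n^cvD_1$ and $\lambda^{-4}g_3^\pm(\lambda)B_{n+2}^1$ come from the first Neumann iterate applied to the $g_2^\pm$ and $g_3^\pm$ pieces of $\widetilde M_j^\pm$. Mixed terms such as $\lambda^{-2}g_1^\pm(\lambda)B_n^1$ arise from second-order Neumann iterates pairing the $g_1^\pm vG_{n-2}^c v$ component of $T_\pm$ with the $\lambda^2 vG_1^0 v$ component through an intermediate $D_0$ (since the $S_1$ projections appear only at the outer edges), yielding $B_n^1 = D_1 v G_{n-2}^c v D_0 v G_1^0 v D_1 + \ldots$ as stated. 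A parallel computation with one additional iterate produces $B_{n+2}^j$.

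The main obstacle will be the combinatorial bookkeeping. Because $T_\pm$ simultaneously contains polynomial factors $\lambda^{2j}$ and logarithmic factors $g_j^\pm(\lambda)$, each additional Neumann iterate generates many cross-products that must be resorted by both their real/imaginary character and their net order in $\lambda$, to identify which terms constitute the explicit leading coefficients and which belong to $\widetilde B_j^\pm(\lambda)$. The absolute boundedness of every displayed coefficient follows from Lemma~\ref{lem:D0bdd} and the absolute boundedness of $D_1$ noted below Lemma~\ref{D1 lemma}, while the uniform derivative bounds for $\widetilde B_j^\pm(\lambda)$ reduce directly to \eqref{M0err}--\eqref{M2err}.
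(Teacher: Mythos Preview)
Your approach is correct and essentially identical to the paper's: factor out the leading block $-\lambda^2 S_1vG_1^0vS_1$, invert by $-\lambda^{-2}D_1$, and expand in a Neumann series, tracking the $g_j^\pm$ pieces separately from the purely real polynomial terms.

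Two bookkeeping slips in your sketch are worth flagging.  First, writing $B_{2j}=D_1C_{2j}D_1$ is only the first-iterate contribution; for $j\geq 4$ there are additional real-valued contributions from higher iterates (e.g.\ $D_1C_4D_1C_4D_1$ at order $\lambda^4$), so $B_{2j}$ is a sum over all such products of total degree $2j$.  Second, your explanation of $B_n^1$ misidentifies the source of the $vG_1^0v$ factor: the $\lambda^2 S_1vG_1^0vS_1$ block is the leading term $A_0$, not part of $T_\pm$, so it cannot appear in a second Neumann iterate.  The first two summands of $B_n^1$ instead arise from the \emph{first} iterate applied to the $\lambda^2 g_1^\pm S_1C_n^1S_1$ piece of the refined $S_1\widetilde M_0^\pm S_1$ (with $C_n^1$ from Lemma~\ref{M+S inverse}); the intermediate $D_0$ you see in $B_n^1$ is inherited from the internal structure of $C_n^1$, not manufactured by the $B_\pm^{-1}$ Neumann series itself.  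The remaining two summands of $B_n^1$ come from the second iterate pairing $g_1^\pm S_1vG_{n-2}^cvS_1$ with $\lambda^4 S_1C_4S_1$.  None of this affects the validity of the argument, only the precise attribution of terms.
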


\begin{proof}

As usual we consider the `+' case and omit subscripts,
the `-' case follows similarly.  We begin by noting that
\begin{align*}
	B(\lambda)^{-1}&=\Bigl[-\lambda^2 S_1vG_1^0vS_1
	-\sum_{j=2}^{\frac{n-4}{2}}
	\lambda^{2j}S_1 C_{2j}S_1-g_1^\pm(\lambda) 
		S_1vG_{n-2}^c vS_1	+\lambda^{n-2}S_1 C_{n-2}S_1\\
	&\qquad -S_1\widetilde M_0^\pm(\lambda)S_1\Bigr]^{-1}\\
	&=-\frac{D_1}{\lambda^2}\Bigl[\mathbbm 1 +
	\sum_{j=2}^{\frac{n-4}{2}}
	\lambda^{2j-2}S_1 C_{2j}S_1D_1
	-g_1^\pm(\lambda) S_1vG_{n-2}^c vS_1D_1\\
	&\qquad 	+\lambda^{n-2} S_1C_{n-2}S_1D_1
	-\lambda^{-2}S_1\widetilde M_0^\pm(\lambda)
	S_1D_1\Bigr]^{-1}
\end{align*}
where $D_1:=(S_1vG_1^0vS_1)^{-1}$ is an absolutely bounded
operator on $S_1L^2(\R^n)$ by Lemma~\ref{D1 lemma} below.

We again only concern ourselves with explicitly finding
the operators for the first few occurrences of 
the functions $g_1(\lambda)$,
$g_2(\lambda)$ and $g_3(\lambda)$.  The terms that arise
with only powers of the spectral parameter $\lambda$
come with only real-valued, absolutely bounded 
operators which are easier to control.
This again follows by a careful analysis
of the various terms that arise in the Neumann series
expansion.

\end{proof}

\begin{rmk}

	The error estimates here can be more compactly
	summarized as
	\begin{align*}
		\widetilde B_0^{\pm}(\lambda)=\widetilde O_{\frac{n}{2}-1}
		(\lambda^{n-6+\ell}), \qquad
		\widetilde B_1^{\pm}(\lambda)=\widetilde O_{\frac{n}{2}}
		(\lambda^{n-4+\ell}), \qquad
		\widetilde B_2^{\pm}(\lambda)=\widetilde O_{\frac{n}{2}}
		(\lambda^{n-2+\ell})
	\end{align*}
	as absolutely bounded operators on $L^2(\R^n)$, 
	for $0<\lambda<\lambda_1$.  The leading $\lambda^2$
	term in $B_\pm(\lambda)$, \eqref{eq:Bexp}, causes an effective loss of
	four powers of $\lambda$ in the expansion for $B_{\pm}(\lambda)^{-1}$ and hence later for
	$M^\pm(\lambda)^{-1}$ and the perturbed resolvents
	$R_V^\pm(\lambda^2)$.  Heuristically speaking, this
	corresponds to being able to integrate by parts only
	$\frac{n}{2}-2$ times in \eqref{Stone} before the
	integral is too singular as $\lambda \to 0$, which is why a generic
	eigenfunction at zero causes a two power loss of 
	time decay.  This
	loss in the spectral parameter in the expansions,
	necessitates going out to size $\lambda^{n+2+}$ in the
	expansions for $R_0^\pm(\lambda^2)$ to obtain the
	desired $|t|^{-\f n2}$
	time decay in Section~\ref{sec:sing}.

\end{rmk}

To prove parts (\ref{thmpart2}) and (\ref{thmpart3})
of Theorem~\ref{thm:main}, we need the following
corollary.

\begin{corollary}\label{Binv cor}

	Under the hypotheses of Lemma~\ref{lem:Binv}, if
	$P_eV1=0$ then,
	\begin{align*}
		B_{\pm}(\lambda)^{-1}
		&=-\frac{D_1}{\lambda^2}
		+\sum_{j=2}^{\frac{n-4}{2}}
		\lambda^{2j-4}B_{2j}
		+\lambda^{n-6} B_{n-2}
		+\frac{g_2^\pm(\lambda)}{\lambda^4}D_1vG_n^cvD_1
		+\lambda^{n-4}B_{n}+
		\widetilde B_1^\pm (\lambda)
	\end{align*}
	
	If, in addition, $P_{e}Vx=0$ then
	\begin{align*}
		B_{\pm}(\lambda)^{-1}
		&=-\frac{D_1}{\lambda^2}+\sum_{j=1}^{\frac{n-4}{2}}
		\lambda^{2j-4}B_{2j}
		+\lambda^{n-6} B_{n-2}
		+\lambda^{n-4}B_{n}\\
		&+\frac{g_3^\pm(\lambda)}{\lambda^4}B_{n+2}^1
		+\lambda^{n-2}B_{n+2}^4+
		\widetilde B_2^\pm (\lambda)
	\end{align*}

\end{corollary}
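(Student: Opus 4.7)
The plan is to derive the corollary directly from the expansion of $B_\pm(\lambda)^{-1}$ in Lemma~\ref{lem:Binv} by translating the hypotheses into algebraic conditions on $S_1$, then using the low-rank structure of $G_{n-2}^c$ and $G_n^c$ to eliminate specific terms.

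First I would set up the dictionary between the two formulations. The characterization in Lemma~\ref{S characterization} identifies each $\phi \in S_1 L^2$ with a zero-energy eigenfunction $\psi = -G_0^0 v \phi$ via $v\phi = V\psi$. Hence $\int V\psi\, dx = \langle v,\phi\rangle = \langle S_1 v, \phi\rangle$ and $\int x_j V\psi\, dx = \langle S_1(v x_j), \phi\rangle$. These inner products vanish for every $\phi \in S_1 L^2$ precisely when the corresponding element of $S_1 L^2$ is zero, so $P_e V 1 = 0 \Leftrightarrow S_1 v = 0$ and $P_e V x = 0 \Leftrightarrow S_1(v x_j) = 0$ for each $j$.

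Next I would exploit the kernel structure. By \eqref{Gjcomp}, $G_{n-2}^c$ has the constant kernel $b_{n-2}$, so $v G_{n-2}^c v = b_{n-2}\, v \otimes v$ is rank one. Since $D_1 = S_1 D_1 S_1$, the condition $S_1 v = 0$ yields $D_1 v = 0$ and $\langle v, D_1 f\rangle = \langle S_1 v, D_1 f\rangle = 0$, so $D_1 v G_{n-2}^c v D_1 = 0$. A term-by-term check of $B_n^1$ from Lemma~\ref{lem:Binv} shows that each of its four summands places $G_{n-2}^c$ immediately next to a copy of $D_1$, and hence $B_n^1 = 0$ as well. Deleting these vanishing coefficients from the expansion of Lemma~\ref{lem:Binv} yields the first statement of the corollary. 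For the second, expand $|x-y|^2 = |x|^2 - 2 x \cdot y + |y|^2$ inside $G_n^c$ to express $v G_n^c v$ as a combination of rank-one operators of the types $(v|\cdot|^2) \otimes v$, $(v x_j)\otimes(v x_j)$, and $v\otimes(v|\cdot|^2)$. Sandwiched with $D_1$, each such piece contains a factor from $\{D_1 v,\ D_1(v x_j),\ \langle v, D_1\cdot\rangle,\ \langle v x_j, D_1\cdot\rangle\}$, all of which vanish under the combined hypotheses, giving $D_1 v G_n^c v D_1 = 0$. By the same structural argument, every summand of $B_{n+2}^2$ and $B_{n+2}^3$ carries a $G_{n-2}^c$ or $G_n^c$ flanked by $D_1$, and hence vanishes. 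The quartic kernel of $G_{n+2}^c$ produces monomials such as $|x|^2|y|^2$ that are not killed by the stated orthogonality conditions, so the $g_3^\pm(\lambda)/\lambda^4\cdot B_{n+2}^1$ contribution survives as the sole remaining $g$-weighted term.

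The main obstacle is that $B_{n+2}^2$ and $B_{n+2}^3$ are not spelled out explicitly in Lemma~\ref{lem:Binv}, so one must revisit the Neumann series computation there and identify each contributing summand by its $\lambda$-order and its complex structure. Because $\lambda^2 g_1^\pm$ and $g_2^\pm$ have overlapping $\lambda$-orders, care is needed to determine which summands are grouped into the named operators $B_{n+2}^j$ and which are absorbed into the error remainders $\widetilde B_j^\pm$, so that the cancellations from $S_1 v = 0$ and $S_1(v x_j) = 0$ propagate correctly through the full expansion and leave the desired simplified form.
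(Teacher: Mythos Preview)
Your proposal is correct and follows essentially the same approach as the paper. The only notational difference is that the paper derives the key identity $D_1 = wP_ew$ (from $P_e = G_0^0 v D_1 v G_0^0$ and $S_1 = -wG_0^0 v S_1$) and then writes $D_1 v G_{n-2}^c = c_{n-2}\, wP_eV1$ directly, rather than passing through your equivalent reformulation $S_1 v = 0$; both routes kill $D_1 v G_{n-2}^c v D_1$ and $B_n^1$ under $P_eV1=0$, and your $|x-y|^2$ expansion for $G_n^c$ matches the paper's treatment of the second case verbatim.
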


\begin{proof}

	We note that $D_1=S_1D_1S_1$, along with the identities
	\begin{align}
		S_1=-wG_0^0vS_1=-S_1vG_0^0w. \label{S1 ident}
	\end{align}
	So that, using  $P_e=G_0^0vD_1vG_0^0$ by \eqref{Pe defn},
	\begin{align}\label{D1Pe ident}
		D_1=S_1D_1S_1=wG_0^0vD_1vG_0^0w=wP_ew.
	\end{align}
	As a consequence, we have 
	\begin{align}\label{D1 useful}
		D_1vG_{n-2}^c=c_{n-2}wP_eV1.
	\end{align}
	The first claim follows clearly from Lemma~\ref{lem:Binv} since the coefficient of
	$\lambda^{n-6}$ is a scalar multiple of the
	operator $P_eV1$. 
	Further,
	\begin{align*}
		c_n^{-1}D_1vG_{n}^cvD_1&=wP_e V [x^2-2x\cdot y+y^2]VP_ew\\
		&=wP_eVx^2 1VP_ew-2wP_eVx\cdot yVP_ew+wP_eV1 y^2 VP_ew.
	\end{align*}
	We see that when $P_eV1=0$ and
	$P_{e}Vx=0$, the operator $D_1vG_{n}^cvD_1=0$.
	We also note that it is now clear that when
	$P_eV1,P_eVx=0$, one has
	$B_n^1=D_1vG_{n-2}^cvD_0vG_1^0vD_1
		+D_1vG_1^0vD_0vG_{n-2}^cvD_1+D_1C_4D_0vG_{n-2}^cvD_1
		+D_1vG_{n-2}^cvD_0C_4D_1=0$ as well.

\end{proof}

	Effectively, all terms that have the function
	$g_1^\pm(\lambda)$ become zero if $P_eV1=0$ and
	all terms with the function $g_2^\pm(\lambda)$ 
	become zero if $P_eVx=0$ as well.

We are now ready to give a full expansion for the
operators $M^{\pm}(\lambda)^{-1}$.  We state several versions of the expansions for
$M^\pm (\lambda)^{-1}$.  These different
expansions allow us to account for cancellation properties
of the eigenfunctions and 
have finer control on the time decay rate of the error terms of the evolution given
in Theorem~\ref{thm:main} at the cost of more decay on
the potential.

\begin{lemma}\label{lem:Minv}

	Assume $|V(x)|\les \la x\ra^{-\beta}$ for some $\beta>n+8$, then
	\begin{multline}
		M^{\pm}(\lambda)^{-1}=-\frac{D_1}{\lambda^2}
		+\sum_{j=0}^{\frac{n-8}{2}} \lambda^{2j}
		M_{2j}+\frac{g_1^\pm (\lambda)}{\lambda^4}M_{n-6}^L+\lambda^{n-6}
		M_{n-6}\\
		+\frac{g_1^\pm(\lambda)}{\lambda^2}
		M_{n-4}^{L1}+\frac{g_2^\pm(\lambda)}{\lambda^4}
		M_{n-4}^{L2}
		+\lambda^{n-4}M_{n-4}\\
		+g_1^\pm(\lambda)M_{n-2}^{L1}+\frac{g_2^\pm(\lambda)}{\lambda^2}M_{n-2}^{L2}+
		\frac{g_3^\pm(\lambda)}{\lambda^4}M_{n-2}^{L3}
		+\lambda^{n-2}M_{n-2}
		+\widetilde{O}_{\f n2}(\lambda^{n-2+})\label{Minv eqn}
	\end{multline}
	for sufficiently small $\lambda$, with all operators
	$M_{k}$ and $M_{k}^{Lj}$ real-valued and absolutely bounded.

\end{lemma}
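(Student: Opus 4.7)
The plan is to obtain the expansion by applying the Jensen--Nenciu identity of Lemma~\ref{JNlemma} and then multiplying out the expansions for $(M^{\pm}(\lambda)+S_1)^{-1}$ and $B_{\pm}(\lambda)^{-1}$ already established in Lemma~\ref{M+S inverse} and Lemma~\ref{lem:Binv}. Concretely, with $A=M^{\pm}(\lambda)$ and $S=S_1$, Lemma~\ref{JNlemma} gives
\begin{equation*}
M^{\pm}(\lambda)^{-1} = (M^{\pm}(\lambda)+S_1)^{-1} + (M^{\pm}(\lambda)+S_1)^{-1} S_1 B_{\pm}(\lambda)^{-1} S_1 (M^{\pm}(\lambda)+S_1)^{-1}.
\end{equation*}
Under the hypothesis $\beta>n+8$ both component lemmas apply at their fullest (with $\ell$ fixed slightly less than $2$), producing the desired error order $\widetilde O_{\f n2}(\lambda^{n-2+})$.

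For the second step I would exploit the absorbing identities $S_1 D_0 = D_0 S_1 = S_1$, so that when we multiply $(M^{\pm}(\lambda)+S_1)^{-1}$ against $S_1$ on either side, the leading $D_0$ piece collapses onto $S_1$ and the other terms contribute only corrections of size $\lambda^2$ or higher. Thus the sandwich
\begin{equation*}
(M^{\pm}(\lambda)+S_1)^{-1} S_1 B_{\pm}(\lambda)^{-1} S_1 (M^{\pm}(\lambda)+S_1)^{-1}
\end{equation*}
contributes a leading $B_{\pm}(\lambda)^{-1}$ (producing the $-D_1/\lambda^2$ singular term of the claimed expansion) plus a cascade of cross terms each of which is $(\lambda^{2j}$-, $g_1^{\pm}$-, $g_2^{\pm}$-, or $g_3^{\pm}$-type$)$ multiplied against another factor of the same kind. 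The outer $(M^{\pm}+S_1)^{-1}$ expansion from Lemma~\ref{M+S inverse} contributes the purely even polynomial part $\sum \lambda^{2j} M_{2j}$ directly, and supplies the real-valued absolutely bounded operators $M_k$ at each even power.

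Third, I would organize the resulting terms by collecting, for each displayed coefficient $\lambda^{2j}$, $g_1^{\pm}(\lambda)/\lambda^k$, $g_2^{\pm}(\lambda)/\lambda^k$, $g_3^{\pm}(\lambda)/\lambda^4$, or $\lambda^{n-2}$ in the claim, all contributions from the product that match that scale, and combining them into a single absolutely bounded, real-valued operator $M_k$ or $M_k^{Lj}$. Real-valuedness of these coefficients follows because the only sources of imaginary parts are the $g_j^{\pm}(\lambda)$ factors (via \eqref{eq:g diff}), which we have already factored out, while each remaining operator kernel is built from $D_0,D_1$ and the real operators $G_j^{0,r,c}$ appearing in Lemma~\ref{lem:R0exp}. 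Absolute boundedness transfers term by term from the constituent expansions. Error orders are obtained by multiplying the corresponding orders: $\widetilde O_{\f n2-1}(\lambda^{n-2+\ell})$ from Lemma~\ref{lem:Mexp}/\ref{M+S inverse} and $\widetilde O_{\f n2}(\lambda^{-2})$ (or higher) from Lemma~\ref{lem:Binv}, giving the stated $\widetilde O_{\f n2}(\lambda^{n-2+})$ remainder.

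The principal difficulty is bookkeeping: the product of the two long expansions generates many cross terms that are naturally grouped by $\lambda^{2a} g_i^{\pm}(\lambda) g_j^{\pm}(\lambda) \cdots$ combinations, and one must verify that every such cross term of size smaller than $\lambda^{n-2+}$ either matches one of the enumerated coefficients of the statement or can be absorbed into the remainder; the temptation is to retain only the schematic form. I would handle this by carefully bounding each cross term by the product of the two factors' $\widetilde O$-classes and then discarding any term with net order at least $\lambda^{n-2+}$, which is justified by the $\frac{n}{2}$-fold differentiability built into both sub-expansions. Once this accounting is complete, the identification of the operators $M_{n-6}^L$, $M_{n-4}^{Lj}$, $M_{n-2}^{Lj}$ proceeds just as in the last step of the proof of Lemma~\ref{lem:Binv}, using Corollary~\ref{Binv cor}-type identities when comparing with the orthogonality-driven simplifications $P_eV1=0$, $P_eVx=0$ that we will later invoke in Section~\ref{sec:sing}.
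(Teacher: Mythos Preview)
Your approach is correct and essentially identical to the paper's: the proof there is simply the one-line observation that the expansion follows by combining Lemma~\ref{JNlemma} with the expansions in Lemmas~\ref{M+S inverse} and~\ref{lem:Binv}, which is exactly the strategy you outline (with considerably more detail on the bookkeeping). One minor slip: the error from the fullest expansion in Lemma~\ref{M+S inverse} is $\widetilde O_{\f n2}$ rather than $\widetilde O_{\f n2-1}$, but this does not affect the argument since you correctly arrive at the $\widetilde O_{\f n2}(\lambda^{n-2+})$ remainder.
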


\begin{proof}

This follows from the expansions in Lemmas~\ref{M+S inverse} and \ref{lem:Binv}, and the inversion lemma,
Lemma~\ref{JNlemma}.

\end{proof}

Later on it will be important to explicitly identify the form of the
operator $M_{n-6}^L$.  We use
Lemma~\ref{M+S inverse} to see that
$$
	(M^\pm(\lambda)+S_1)^{-1}=D_0
	+O(\lambda^2).
$$
Pairing this with the $g_1^\pm(\lambda)$ term in Lemma~\ref{lem:Binv},
the smallest $\lambda$ contribution that is not 
strictly real-valued is
\begin{align*}
	\frac{g_1^\pm(\lambda)}{\lambda^4}D_0
	D_1vG_{n-2}^c
	vD_1	D_0.
\end{align*}
Since $D_1D_0=D_0D_1=D_1$, we have
\begin{align}\label{Mn-6L}
	M_{n-6}^L=D_1vG_{n-2}^cvD_1=wP_eV1VP_ew.
\end{align}

The expansion \eqref{Minv eqn}
can be truncated to require less decay on
the potential by using less of the expansions in
Lemmas~\ref{lem:R0exp} and \ref{lem:Binv}.  Specifically,
stopping with the error terms $\widetilde M_0^\pm(\lambda)$ and
$\widetilde B_0^\pm (\lambda)$ respectively with $\ell=0+$.

\begin{corollary}\label{cor:Minv no cancel}

	Assume $|V(x)|\les \la x\ra^{-n-}$, then
	\begin{align}
		M^{\pm}(\lambda)^{-1}=-\frac{D_1}{\lambda^2}
		+\sum_{j=0}^{\frac{n-8}{2}} \lambda^{2j}
		M_{2j}+\frac{g_1^\pm (\lambda)}{\lambda^4}M_{n-6}^L+\lambda^{n-6}
		M_{n-6}
		+\widetilde{O}_{\f n2-1}
		(\lambda^{n-6+}).\label{Minv eqn1}
	\end{align}
	If $|V(x)|\les \la x\ra^{-n-4-}$, then
	\begin{align}
		M^{\pm}(\lambda)^{-1}&=-\frac{D_1}{\lambda^2}
		+\sum_{j=0}^{\frac{n-8}{2}} \lambda^{2j}
		M_{2j}+\frac{g_1^\pm (\lambda)}{\lambda^4}M_{n-6}^L+\lambda^{n-6}
		M_{n-6}\nn\\
		&+\frac{g_1^\pm(\lambda)}{\lambda^2}
		M_{n-4}^{L1}+\frac{g_2^\pm(\lambda)}{\lambda^4}
		M_{n-4}^{L2}
		+\lambda^{n-4}M_{n-4}
		+\widetilde{O}_{\f n2}
		(\lambda^{n-4+}).\label{Minv eqn2}
	\end{align}
	with the operators
	$M_{2j}$ and $M_{2j}^{Lk}$ all real-valued and absolutely bounded.

\end{corollary}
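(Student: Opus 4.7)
The plan is to derive these truncated expansions by following the same recipe used for Lemma~\ref{lem:Minv}, but stopping the expansions of Lemmas~\ref{M+S inverse} and~\ref{lem:Binv} at earlier error terms. The only nontrivial bookkeeping is matching the new remainder orders against the weaker decay on $V$.

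\textbf{Step 1: Apply the Jensen--Nenciu formula.} By Lemma~\ref{JNlemma} with $A=M^\pm(\lambda)$ and $S=S_1$, we have
\begin{align*}
    M^\pm(\lambda)^{-1}=(M^\pm(\lambda)+S_1)^{-1}+(M^\pm(\lambda)+S_1)^{-1}S_1 B_\pm(\lambda)^{-1}S_1(M^\pm(\lambda)+S_1)^{-1}.
\end{align*}
The identity $S_1 D_0=D_0 S_1=S_1$ will be used repeatedly to simplify the sandwich, so that the leading $-D_1/\lambda^2$ of $B_\pm(\lambda)^{-1}$ produces exactly the $-D_1/\lambda^2$ of the claimed expansions without any spurious $D_0$ factors.

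\textbf{Step 2: Expansion~\eqref{Minv eqn1}.} Under $|V|\lesssim \langle x\rangle^{-n-}$, we have $\beta>n/2+\ell$ with $\ell=0+$, so we may use the first truncation in Lemma~\ref{M+S inverse}, where $(M^\pm+S_1)^{-1}=D_0+\sum\lambda^{2j}C_{2j}-g_1^\pm(\lambda)D_0vG_{n-2}^cvD_0+\lambda^{n-2}C_{n-2}+\widetilde M_0^\pm(\lambda)$ with $\widetilde M_0^\pm(\lambda)=\widetilde O_{n/2-1}(\lambda^{n-2+})$, and the first truncation in Lemma~\ref{lem:Binv}, where $B_\pm(\lambda)^{-1}$ has its $\widetilde B_0^\pm=\widetilde O_{n/2-1}(\lambda^{n-6+})$ remainder. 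Plugging both into the formula from Step 1 and collecting: the $-D_1/\lambda^2$, $\sum_{j=0}^{(n-8)/2}\lambda^{2j}M_{2j}$, $(g_1^\pm(\lambda)/\lambda^4)M_{n-6}^L$ and $\lambda^{n-6}M_{n-6}$ terms are produced exactly as in the proof of Lemma~\ref{lem:Minv}; all remaining contributions have size $\widetilde O_{n/2-1}(\lambda^{n-6+})$ because multiplying by $\lambda^{2j}C_{2j}$ etc.\ only strictly raises the $\lambda$-order of the $\widetilde B_0^\pm$ remainder, and the sandwiched $D_0\widetilde M_0^\pm(\lambda)D_0$ piece contributes $\widetilde O(\lambda^{n-2+})$, which is absorbed.

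\textbf{Step 3: Expansion~\eqref{Minv eqn2}.} Under $|V|\lesssim \langle x\rangle^{-n-4-}$, we have $\beta>n/2+2+\ell$ for $\ell=0+$, so we may further expand $\widetilde M_0^\pm$ via~\eqref{M0exp2} to extract $-g_2^\pm(\lambda)D_0vG_n^cvD_0+\lambda^2 g_1^\pm(\lambda)C_n^1+\lambda^n C_n$ with new remainder $\widetilde M_1^\pm=\widetilde O_{n/2}(\lambda^{n+})$, and expand $\widetilde B_0^\pm$ via Lemma~\ref{lem:Binv} to pull out $(g_2^\pm/\lambda^4)D_1vG_n^cvD_1+(g_1^\pm/\lambda^2)B_n^1+\lambda^{n-4}B_n$ with remainder $\widetilde B_1^\pm=\widetilde O_{n/2}(\lambda^{n-4+})$. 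Substituting these into the Jensen--Nenciu formula and again using $S_1 D_0=S_1$ to collapse the outer factors produces the additional terms $(g_1^\pm(\lambda)/\lambda^2)M_{n-4}^{L1}$, $(g_2^\pm(\lambda)/\lambda^4)M_{n-4}^{L2}$, and $\lambda^{n-4}M_{n-4}$ advertised in~\eqref{Minv eqn2}, with the residual terms now of order $\widetilde O_{n/2}(\lambda^{n-4+})$.

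\textbf{Where the difficulty sits.} The only genuine care required is ensuring that the reality and absolute boundedness of the intermediate operators are preserved when cross-terms between $g_j^\pm(\lambda)$-pieces are gathered. By Lemmas~\ref{M+S inverse} and~\ref{lem:Binv} the operators $C_{2j},\,B_{2j}$ and their sandwiches through $D_0,D_1$ are real-valued and absolutely bounded, and each nonreal factor in the product carries one of $g_1^\pm,g_2^\pm,g_3^\pm$; thus the combinations producing $M_{2j}$ are real-valued and the combinations producing $M_k^{Lj}$ carry exactly one $g_j^\pm$-factor, as claimed. The remainder estimates are read off mechanically from Lemma~\ref{lem:Mexp} after observing that differentiation in $\lambda$ applied to any $g_j^\pm(\lambda)/\lambda^{2k}$ factor does not worsen the leading $\lambda$-order up to logarithmic factors already accommodated by the $\widetilde O$-notation.
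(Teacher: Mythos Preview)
Your proposal is correct and follows essentially the same approach as the paper, which simply remarks that~\eqref{Minv eqn} ``can be truncated to require less decay on the potential by using less of the expansions in Lemmas~\ref{M+S inverse} and~\ref{lem:Binv}\ldots stopping with the error terms $\widetilde M_0^\pm(\lambda)$ and $\widetilde B_0^\pm (\lambda)$ respectively with $\ell=0+$.''  One minor point: in Step~3 you write ``expand $\widetilde M_0^\pm$ via~\eqref{M0exp2},'' but~\eqref{M0exp2} is the expansion of $M_0^\pm$, not $\widetilde M_0^\pm$; you should instead invoke the second displayed expansion in Lemma~\ref{M+S inverse} directly.
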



The lemma can also be modified to better account for 
cancellation properties of the projection onto the
zero-energy eigenspace.

\begin{corollary}\label{cor:Minv}

	Under the hypotheses of Lemma~\ref{lem:Minv}, 
	if $P_eV1=0$ and
	 $|V(x)|\les \la x\ra^{-n-4-}$, then
	\begin{align}
		M^{\pm}(\lambda)^{-1}&=-\frac{D_1}{\lambda^2}
		+\sum_{j=0}^{\frac{n-8}{2}} \lambda^{2j}
		M_{2j}+\lambda^{n-6}
		M_{n-6}+\frac{g_2^\pm(\lambda)}{\lambda^4}
		M_{n-4}^{L2}\nn\\
		&+\lambda^{n-4}M_{n-4}
		+\widetilde{O}_{\f n2}(\lambda^{n-4+})
		\label{Minv PV101}
	\end{align}
	If $|V(x)|\les \la x\ra^{-n-8-}$, then
	\begin{align}
		M^{\pm}(\lambda)^{-1}&=-\frac{D_1}{\lambda^2}
		+\sum_{j=0}^{\frac{n-8}{2}} \lambda^{2j}
		M_{2j}+\lambda^{n-6}
		M_{n-6}+\frac{g_2^\pm(\lambda)}{\lambda^4}
		M_{n-4}^{L2}\nn\\
		&+\lambda^{n-4}M_{n-4}
		+\frac{g_2^\pm(\lambda)}{\lambda^2}M_{n-2}^{L2}+
		\frac{g_3^\pm(\lambda)}{\lambda^4}M_{n-2}^{L3}
		+\lambda^{n-2}M_{n-2}
		+\widetilde{O}_{\f n2}(\lambda^{n-2+})
		\label{Minv PV102}
	\end{align}	
	If in addition, $P_eVx=0$, and $|V(x)|\les \la x\ra^{-n-8-}$,
	 then
	\begin{align}
		M^{\pm}(\lambda)^{-1}&=-\frac{D_1}{\lambda^2}
		+\sum_{j=0}^{\frac{n-8}{2}} \lambda^{2j}
		M_{2j}+\lambda^{n-6}
		M_{n-6}
		+\lambda^{n-4}M_{n-4}\nn \\
		&+\frac{g_3^\pm(\lambda)}{\lambda^4}M_{n-2}^{L3}
		+\lambda^{n-2}M_{n-2}
		+\widetilde{O}_{\f n2}(\lambda^{n-2+})
	\end{align}

\end{corollary}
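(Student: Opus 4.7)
The proof strategy mirrors that of Lemma~\ref{lem:Minv}: substitute the expansion of Lemma~\ref{M+S inverse} for $(M^\pm+S_1)^{-1}$ together with Corollary~\ref{Binv cor} for $B_\pm(\lambda)^{-1}$ into the inversion identity
\[
M^\pm(\lambda)^{-1} = (M^\pm+S_1)^{-1} + (M^\pm+S_1)^{-1}\,S_1\,B_\pm(\lambda)^{-1}\,S_1\,(M^\pm+S_1)^{-1}
\]
of Lemma~\ref{JNlemma}, and proceed as in the proof of Lemma~\ref{lem:Minv}.  The key improvement is that Corollary~\ref{Binv cor} has already eliminated the $g_1^\pm$ coefficients from $B_\pm(\lambda)^{-1}$ under the assumption $P_eV1=0$, and the $g_2^\pm$ coefficient $D_1vG_n^cvD_1$ under the additional assumption $P_eVx=0$; these cancellations propagate through the sandwich.

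The additional ingredient needed at the level of $M^\pm(\lambda)^{-1}$ is the identity $S_1v=0$ whenever $P_eV1=0$.  Using the characterization of $S_1$ in Lemma~\ref{S characterization}, an orthonormal basis $\{\phi_i\}$ of $S_1L^2$ can be taken proportional to $w\psi_i$ for the zero-energy eigenfunctions $\psi_i$, so that
\[
\langle \phi_i,v\rangle \propto \int w(x)v(x)\,\psi_i(x)\,dx = \int V(x)\,\psi_i(x)\,dx = 0
\]
under the hypothesis.  Because $G_{n-2}^c$ has constant kernel $b_{n-2}$, this yields $G_{n-2}^cvS_1 = b_{n-2}(1\otimes S_1v) = 0$ and symmetrically $S_1vG_{n-2}^c=0$, so the $-g_1^\pm D_0vG_{n-2}^cvD_0$ contribution of $(M^\pm+S_1)^{-1}$ in the sandwich is annihilated once multiplied by $S_1$.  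Analogous identities apply to $G_n^c$ under $P_eVx=0$: expanding the kernel $b_n|x-y|^2=b_n(|x|^2-2x\cdot y+|y|^2)$, the resulting decomposition of $G_n^cvS_1$ is killed by $S_1v=0$ and $S_1(zv)=0$ except for a component involving $P_eVx^2$ that enters only at orders small enough to be absorbed into the stated error.

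What remains is the same algebraic bookkeeping as in Lemma~\ref{lem:Minv}: multiply out the sandwich, use $D_0S_1=S_1D_0=S_1$, $D_1S_1=S_1D_1=D_1$, and $D_1=wP_ew$ to simplify, and collect the surviving real-valued cross-products into the named operators $M_{2j}$, $M_{n-6}$, $M_{n-4}$, $M_{n-2}$, together with $M_{n-4}^{L2}, M_{n-2}^{L2}, M_{n-2}^{L3}$, placing the residual into $\widetilde O_{n/2}(\lambda^{n-4+})$ or $\widetilde O_{n/2}(\lambda^{n-2+})$ as appropriate for each case.  The principal obstacle is exactly this bookkeeping: verifying at each order of $\lambda$ that every $g_1^\pm$ (respectively $g_2^\pm$) contribution either contains an $S_1v$ or $S_1(zv)$ factor that vanishes under the hypothesis, or else is of high enough order in $\lambda$ to lie within the prescribed error.
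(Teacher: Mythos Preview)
Your approach is the same as the paper's: it too says simply to rerun the proof of Lemma~\ref{lem:Minv} with Corollary~\ref{Binv cor} replacing Lemma~\ref{lem:Binv}.  Your observation that $S_1v=0$ under $P_eV1=0$ is correct and is exactly the mechanism behind the paper's identity \eqref{D1 useful}, $D_1vG_{n-2}^c=c\,wP_eV1$; you are just making explicit the cancellation of the cross-terms (those where a $g_1^\pm$ factor from $(M^\pm+S_1)^{-1}$ meets the flanking $S_1$ in the sandwich) that the paper's one-line proof leaves to the reader.

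One small correction: the residual you flag at order $\lambda^{n-2}$---for instance the direct term $-g_1^\pm D_0vG_{n-2}^cvD_0$ from $(M^\pm+S_1)^{-1}$, or the $P_eV|z|^2$ piece of $G_n^cvD_1$ under $P_eVx=0$---is \emph{not} small enough to go into the $\widetilde O_{n/2}(\lambda^{n-2+})$ error, since $\lambda^{n-2}\log\lambda$ dominates $\lambda^{n-2+}$.  Instead, because $g_1^\pm$, $g_2^\pm/\lambda^2$, and $g_3^\pm/\lambda^4$ are all of the common form $\lambda^{n-2}(a\log\lambda+z)$, any such residual is repackaged into the surviving $\frac{g_2^\pm}{\lambda^2}M_{n-2}^{L2}$ (second expansion) or $\frac{g_3^\pm}{\lambda^4}M_{n-2}^{L3}$ (third expansion) and $\lambda^{n-2}M_{n-2}$ terms, with those operators allowed to differ from the ones in Lemma~\ref{lem:Minv}.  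This is harmless for the downstream dispersive estimates, which only use the $+/-$ difference.
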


\begin{proof}

	The proof follows as in the proof of 
	Lemma~\ref{lem:Minv} using Corollary~\ref{Binv cor}
	in place of Lemma~\ref{lem:Binv}.
	

\end{proof}

\section{The finite Born series terms}\label{sec:BS}

In this section we estimate the contribution of the
finite Born series, \eqref{eq:finitebs} showing that it
can be bounded by $|t|^{-\f n2}$ uniformly in $x$ and $y$.
These terms in the expansion of the spectral
measure contain only the free resolvent $R_0^\pm(\lambda^2)$
and therefore are not sensitive to the existence of 
zero energy eigenvalues or their cancellation properties.
In even dimensions the lack of a closed form representation for 
$R_0^\pm(\lambda^2)$ causes much more
technical difficulties in these calculations as compared to the
corresponding section in~\cite{GGodd}.
Many of the techniques we develop here to
overcome these difficulties are vital in controlling
the more singular terms considered in Section~\ref{sec:sing}.

Iterating the standard resolvent identity
$$
	R_V^\pm(\lambda^2)=R_0^\pm(\lambda^2)
	-R_0^\pm(\lambda^2)VR_V^\pm(\lambda^2)=
	R_0^\pm(\lambda^2)
	-R_V^\pm(\lambda^2)VR_0^\pm(\lambda^2),
$$
we form the following identity.
\begin{align}
	R_V^\pm(\lambda^2)&=\sum_{k=0}^{2m+1}(-1)^k
	R_0^\pm(\lambda^2)[VR_0^\pm(\lambda^2)]^k\label{eq:finitebs}\\
	&+[R_0^\pm(\lambda^2)V]^{m}R_0^\pm(\lambda^2)v
	M^\pm(\lambda)^{-1}vR_0^\pm(\lambda^2)
	[VR_0^\pm(\lambda^2)]^{m}.\label{eq:bstail}
\end{align}
In light of Lemma~\ref{lem:iterated} the identity 
holds for $m+1\geq \frac{n-3}{4}$ and 
$|V(x)|\les \la x\ra^{-\frac{n+1}{2}-}$ as an identity
from $L^{2,\f12+}\to L^{2,-\f12-}$, as in the 
limiting absorption principle.
 

\begin{prop}\label{bsprop}

	The contribution of \eqref{eq:finitebs} to
	\eqref{Stone} is bounded by $|t|^{-\f n2}$
	uniformly in $x$ and $y$.  That is,
		$$
			\sup_{x,y\in \R^n}\bigg|
			\int_0^\infty e^{it\lambda^2} \lambda \chi(\lambda)\bigg[\sum_{k=0}^{2m+1}(-1)^k\big\{
			R_0^+(VR_0^+)^k
			-R_0^-(VR_0^-)^k\big\}\bigg](\lambda^2)(x,y)\, 
			d\lambda\bigg| \les |t|^{-\f n2}.
		$$

\end{prop}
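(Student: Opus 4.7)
My plan is to decompose each factor of $R_0^\pm(\lambda^2)$ appearing in the sum \eqref{eq:finitebs} using the expansion of Lemma~\ref{lem:R0exp} in the regime $\lambda|x-y|\ll 1$ together with the high-energy asymptotic \eqref{R0 high} when $\lambda|x-y|\gtrsim 1$, and then to extract the $|t|^{-n/2}$ decay from the factor $e^{it\lambda^2}$ via repeated integration by parts in $\lambda$, using $\partial_\lambda e^{it\lambda^2} = 2it\lambda\, e^{it\lambda^2}$. The $k=0$ summand is $R_0^+(\lambda^2)-R_0^-(\lambda^2)$, whose contribution is (up to the smooth low-energy cutoff $\chi(\lambda)$) the free Schr\"odinger evolution and obeys $|t|^{-n/2}$ by the classical Fourier-space argument. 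For each $k\ge 1$ I telescope
\begin{align*}
  R_0^+(VR_0^+)^k - R_0^-(VR_0^-)^k = \sum_{j=0}^{k}(R_0^-V)^j (R_0^+-R_0^-)(VR_0^+)^{k-j},
\end{align*}
so that every summand carries at least one copy of $R_0^+-R_0^-$, which is the source of the crucial $\lambda^{n-2}$ gain at the origin.

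By Lemma~\ref{lem:R0exp} and \eqref{eq:g diff} every purely real piece (the polynomials $G_j^0$ for $j\le\f n2-2$ and $\lambda^{n-2}G_{n-2}^r$) cancels in $R_0^+-R_0^-$, leaving, for $\lambda|x-y|\ll 1$,
\[
  R_0^+(\lambda^2) - R_0^-(\lambda^2) = c_n\,\lambda^{n-2}G_{n-2}^c + \widetilde{O}_{\f n2-1}\bigl(|x-y|^\ell\lambda^{n-2+\ell}\bigr),
\]
whereas for $\lambda|x-y|\gtrsim 1$ the asymptotic \eqref{R0 high} yields an oscillatory kernel of size $\lambda^{(n-3)/2}|x-y|^{(1-n)/2}$ carrying the factor $e^{\pm i\lambda|x-y|}$. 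I will insert a smooth cutoff based on $\lambda|x_j-x_{j+1}|$ on each of the $k+1$ resolvent factors and separately estimate each of the resulting $2^{k+1}$ mixed configurations.

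For the purely low-frequency configuration the Stone factor $\lambda$ combined with the cancellation $\lambda^{n-2}$ yields an amplitude behaving like $\lambda^{n-1}$ with $\f n2-1$ bounded derivatives (up to logarithms); integrating by parts $\f n2$ times against $e^{it\lambda^2}$ then gives the desired $|t|^{-n/2}$ bound, while all spatial integrations in the internal variables converge by the pointwise bound used in Lemma~\ref{lem:iterated} since $|V(x)|\les\la x\ra^{-\beta}$ with $\beta>n$. For configurations containing at least one high-frequency factor the total phase is $\Phi=t\lambda^2\pm\lambda R$ with $R=\sum_j |x_j-x_{j+1}|$; splitting the $\lambda$-integral according to whether $R\les \sqrt{t}$ or $R\gtrsim \sqrt{t}$, in the former range $|\partial_\lambda\Phi|\gtrsim t\lambda$ allows non-stationary integration by parts $\f n2$ times to yield $|t|^{-n/2}$, and in the latter the amplitude decay $|x_j-x_{j+1}|^{(1-n)/2}$ together with stationary phase at $\lambda_0=\mp R/2t$ produces the same rate once the factors of $R$ are absorbed against the decay of $V$.

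The principal technical obstacle is the mixed regime in which some resolvent factors are in their low-energy polynomial plus logarithm form and others in their high-energy oscillatory form. Each derivative of a factor $\log(\lambda|x_j-x_{j+1}|)$ produces $1/\lambda$, which is compensated by the $\lambda^{n-1}$ prefactor, while the spatial logarithm is controlled by \eqref{eq:log-}, separating the harmless $|\log\lambda|$ (integrable against $\chi$) from $\log^-|x_j-x_{j+1}|$ (locally integrable and absorbed into the polynomial weights on $V$). A careful enumeration of the mixed configurations, together with the $\lambda^{n-2}$ supplied by the telescoping, then yields the uniform $|t|^{-n/2}$ estimate asserted in the proposition.
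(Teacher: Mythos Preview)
Your overall architecture---telescoping via \eqref{alg fact}, extracting the $\lambda^{n-2}$ gain from $R_0^+-R_0^-$, then integrating by parts against $e^{it\lambda^2}$---matches the paper's, but there is a genuine gap in obtaining the \emph{unweighted} $L^1\to L^\infty$ bound.

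The problem is your claim that in the high-frequency/stationary-phase regime ``the factors of $R$ are absorbed against the decay of $V$.''  This is true for the inner segments $|z_j-z_{j+1}|$, but not for the leading segment $|x-z_1|$ or the lagging segment $|z_k-y|$: the external variables $x,y$ are not touched by any copy of $V$.  When you integrate by parts $\tfrac{n}{2}$ times (or run stationary phase on the combined amplitude), each derivative that lands on the high-frequency leading resolvent produces a factor of $|x-z_1|$; if all $\tfrac{n}{2}$ derivatives land there you pick up $\la x\ra^{1/2}$ growth (cf.\ the bound $|\partial_\lambda^k R_0^\pm|\les \lambda^{(n-3)/2}|x-z_1|^{k+\frac{1-n}{2}}$ for $k\ge \tfrac{n}{2}$).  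Your scheme therefore delivers only a weighted estimate $L^{1,1/2}\to L^{\infty,-1/2}$, not the stated uniform bound.

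The paper's remedy is to integrate by parts only $\tfrac{n}{2}-1$ times, and then, precisely in the dangerous case where all of those derivatives have already hit the leading (respectively lagging) resolvent, to replace the final integration by parts with the stationary-phase bound of Lemma~\ref{stat phase} applied to the single phase $t\lambda^2\pm\lambda|x-z_1|$, treating the remaining $(VR_0^\pm)^k$ product as a uniformly bounded amplitude.  This is carried out in Lemmas~\ref{lem:BS1} and~\ref{lem:BS2} and is the step your proposal is missing.

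A secondary inaccuracy: the combined phase is not $t\lambda^2\pm\lambda R$ with a single sign.  In $(R_0^-V)^j(R_0^+-R_0^-)(VR_0^+)^{k-j}$ the $R_0^-$ factors contribute $e^{-i\lambda|\cdot|}$ while the $R_0^+$ factors contribute $e^{+i\lambda|\cdot|}$, so the linear-in-$\lambda$ part of the phase is a signed sum $\sum_j \epsilon_j|z_j-z_{j+1}|$ with mixed $\epsilon_j\in\{\pm1\}$.  This is fixable by treating each sign pattern separately, but it means your ``$R\lesssim\sqrt{t}$ versus $R\gtrsim\sqrt{t}$'' dichotomy does not directly apply.
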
  

We prove this claim with  series of Lemmas.
The following corollary to Lemma~\ref{lem:R0exp} 
is useful.

\begin{lemma}\label{iteratedBSexp}

	We have the expansion
	\begin{align}
		(R_0^\pm(\lambda^2)V)^k R_0^\pm(\lambda^2)(x,y)&=
		K_0+\lambda^2 K_2+\dots +\lambda^{n-4} K_{n-4}
		+\widetilde E^\pm_0(\lambda)(x,y),\nn
	\end{align}
	here the operators $K_j$ have real-valued kernels.
	Furthermore, the error term $\widetilde{E}^\pm_0(\lambda)$ satisfies
	$$
		\widetilde E^\pm_0(\lambda)(x,y)=
		(1+\log^-|x-\cdot|+\log^-|\cdot-y|)
		\widetilde O_{\f n2-1}(\lambda^{n-2-}).
	$$
	Furthermore, if one wishes to have $\f n2$ derivatives, the extended expansion
	$$
		\widetilde E^\pm_0(\lambda)(x,y)
		=g_1^\pm(\lambda) K_{n-2}^c
				+\lambda^{n-2}K_{n-2}^r
		+\widetilde E^\pm_1(\lambda)(x,y),
	$$
	satisfies the bound
	$$
		\widetilde E^\pm_1(\lambda)(x,y)=
		\la x\ra^{\f12} \la y \ra^{\f12}
		\widetilde O_{\f n2}(\lambda^{n-\f32}).
	$$

\end{lemma}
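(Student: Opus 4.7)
The plan is to substitute Lemma~\ref{lem:R0exp} (for the first claim) or Corollary~\ref{cor:R0exp} (for the extended claim) into each of the $k+1$ copies of $R_0^\pm(\lambda^2)$ in the product $(R_0^\pm V)^k R_0^\pm$, multiply out, and group the resulting monomials by their total power of $\lambda$. First I would write $R_0^\pm(\lambda^2) = \sum_{j=0}^{n/2-2} \lambda^{2j} G_j^0 + E_0^\pm(\lambda)$ in every factor. Fully expanding produces a finite sum of monomials of the form $\lambda^{2(j_0+\cdots+j_k)} G_{j_0}^0 V G_{j_1}^0 V \cdots V G_{j_k}^0$, plus mixed monomials containing at least one $E_0^\pm(\lambda)$ factor. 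The pure $G$-monomials whose total $\lambda$-power is at most $n-4$ collect into the real-valued operators $K_0, K_2, \ldots, K_{n-4}$; real-valuedness is preserved because each $G_j^0$ has a real-valued kernel and $V$ is real.

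Next I would absorb everything else into $\widetilde E_0^\pm(\lambda)$. The pure $G$-monomials with total $\lambda$-power at least $n-2$ produce real-valued, uniformly bounded kernels, using $|G_j^0(x,y)|\les|x-y|^{2+2j-n}$, the decay of $V$, and an iteration of the Schur-type bound (Lemma~\ref{EG:Lem}) employed in the proof of Lemma~\ref{lem:iterated}; each $\lambda$-derivative strips off one clean factor of $\lambda^{-2}$, so these contributions fit inside $\widetilde O_{\f n2-1}(\lambda^{n-2})$. For the mixed monomials, the factor $E_0^\pm=(1+\log^-|x-y|)\widetilde O_{\f n2-1}(\lambda^{n-2}(1+\log\lambda))$ supplies the correct $\lambda$-power; if it sits at an interior slot, the $\log^-$ singularity is an integration variable argument and is absorbed by the $\la\cdot\ra^{-\beta}$ decay of $V$ on both sides, while if it sits at the first or last slot it retains the singularity $\log^-|x-\cdot|$ or $\log^-|\cdot-y|$ at the free end, which is precisely the form stated in the bound. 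Absorbing $(1+|\log\lambda|)$ into $\lambda^{0-}$ yields the $\lambda^{n-2-}$ claim.

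For the refined expansion with $\f n2$ derivatives, I would use Corollary~\ref{cor:R0exp} with $\alpha=0$ in each factor. The next-order terms $g_1^\pm(\lambda)G_{n-2}^c+\lambda^{n-2}G_{n-2}^r$ in exactly one resolvent slot, paired with the leading $G_0^0$ in every other slot, produce $g_1^\pm(\lambda)K_{n-2}^c+\lambda^{n-2}K_{n-2}^r$; all remaining contributions are placed in $\widetilde E_1^\pm$. The leftover error then carries the factor $|x-y|^{1/2}\widetilde O_{\f n2}(\lambda^{n-3/2})$ from Corollary~\ref{cor:R0exp}. In interior positions the half-power singularity $|x-\cdot|^{1/2}$ is integrated against $\la\cdot\ra^{-\beta}$ and produces a bounded kernel; at an extremal position the unintegrated factor $|x-\cdot|^{1/2}$ (resp.\ $|\cdot-y|^{1/2}$) translates, after pulling out of the convolution against the decaying $V$, into at worst $\la x\ra^{1/2}$ (resp.\ $\la y\ra^{1/2}$) polynomial growth. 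Summing over all positions gives the stated $\la x\ra^{1/2}\la y\ra^{1/2}\widetilde O_{\f n2}(\lambda^{n-3/2})$ bound.

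The main obstacle will be the combinatorial bookkeeping: verifying that every interior logarithmic singularity and every interior $|x-\cdot|^{1/2}$ factor is absorbed uniformly in the outer variables by the decay of $V$ (for which $\beta>\frac{n+1}{2}$ is exactly what is used to close the Schur-type estimates), and that the $\widetilde O_k$ calculus is preserved when a $\lambda$-derivative, applied by the Leibniz rule, is distributed across the $k+1$ resolvent factors so that no factor is asked to supply more derivatives than it has.
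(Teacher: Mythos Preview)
Your proposal is correct and follows essentially the same approach as the paper: substitute the single-resolvent expansions of Lemma~\ref{lem:R0exp} (for $\widetilde E_0^\pm$) and Corollary~\ref{cor:R0exp} (for $\widetilde E_1^\pm$) into each factor of $(R_0^\pm V)^k R_0^\pm$, multiply out, and sort the resulting monomials by their $\lambda$-power. The paper's proof is in fact considerably terser than yours---it simply points to the two input expansions, writes out $K_0$ and $K_2$ explicitly, and declares that the rest ``come from simply multiplying out the terms''---so your detailed treatment of how interior $\log^-$ and $|x-\cdot|^{1/2}$ factors are absorbed by the decay of $V$, while extremal ones survive as $\log^-|x-\cdot|$, $\log^-|\cdot-y|$ or $\la x\ra^{1/2}$, $\la y\ra^{1/2}$, is exactly the content the paper leaves implicit.
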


\begin{proof}

	This follows from the expansions for 
	$R_0^\pm(\lambda^2)$ in Lemma~\ref{lem:R0exp}  for
	$\widetilde E_0^\pm(\lambda)(x,y)$
	or Corollary~\ref{cor:R0exp} for 
	$\widetilde E_1^\pm(\lambda)(x,y)$.

	For the iterated resolvents, the desired bounds come
	from simply multiplying out the terms.  It is
	easy to see that
	$$
		K_0= (G_0^0 V)^k G_0^0
	$$
	and
	$$
		K_2=\sum_{j=0}^{k} (G_0^0 V)^j G_1^0 (VG_0^0)^{k-j}
	$$
	one can obtain similar expressions for the other
	operators, but they are not needed.

\end{proof}

\begin{rmk} \label{rmk:33}
The spatially weighted bound $|\partial_\lambda^{\f n2}\widetilde{E}_1^\pm(\lambda)(x,y)|
\les \la x \ra^{\f 12}\lambda^{\frac{n-3}{2}}$ is only needed if all $\f n2$ derivatives act on the
leading resolvent, $R_0^\pm(\lambda^2)(x,z_1)$, in the product.   Similarly, the upper bound $\la y \ra^{\f 12}\lambda^{\frac{n-3}{2}}$ is
only needed if all derivatives act on the lagging resolvent, $R_0^\pm(\lambda^2)(z_k,y)$, in the product.  All other expressions that
arise would be consistent with $\widetilde{E}_1^\pm(\lambda)$ belonging to the class 
$\widetilde{O}_{\f n2}(\lambda^{n-2-})$.
\end{rmk}

The desired time decay follows from taking the difference
and noting that
\begin{multline*}
	[(R_0^+(\lambda^2)V)^k R_0^+(\lambda^2)-
	(R_0^-(\lambda^2)V)^k R_0^-(\lambda^2)](x,y)\\=
	[g_1^+(\lambda)-g_1^-(\lambda)] K_{n-2}^c
	+\widetilde E^+_1(\lambda)(x,y)-\widetilde E^-_1(\lambda)
	(x,y)\\
	=c_1 \lambda^{n-2}K_{n-2}^c+
	\la x\ra^{\f12} \la y \ra^{\f12}
	\widetilde O_{\f n2}(\lambda^{n-\f32}).
\end{multline*}
The first term contributes $|t|^{-\f n2}$
by Lemma~\ref{lem:IBP} as an operator
from $L^1\to L^\infty$, whereas the second term can
be bounded by $|t|^{-\f n2}$ (from Corollary~\ref{cor:fauxIBP2}), but maps
$L^{1,\f12}\to L^{\infty,-\f12}$.  This method fails
to obtain an unweighted $L^1\to L^\infty$
only when all the $\lambda$ derivatives act on
either a leading or lagging free resolvent.  In the
following Lemmas, we show how the unweighted bound
can be achieved.

The following variation of stationary phase from \cite{Sc2} will be useful in the analysis.

\begin{lemma}\label{stat phase}

	Let $\phi'(\lambda_0)=0$ and $1\leq \phi'' \leq C$.  Then,
  	\begin{multline*}
    		\bigg| \int_{-\infty}^{\infty} e^{it\phi(\lambda)} a(\lambda)\, d\lambda \bigg|
    		\lesssim \int_{|\lambda-\lambda_0|<|t|^{-\frac{1}{2}}} |a(\lambda)|\, d\lambda\\
    		+|t|^{-1} \int_{|\lambda-\lambda_0|>|t|^{-\frac{1}{2}}} \bigg( \frac{|a(\lambda)|}{|\lambda-\lambda_0|^2}+
    		\frac{|a'(\lambda)|}{|\lambda-\lambda_0|}\bigg)\, d\lambda.
  	\end{multline*}

\end{lemma}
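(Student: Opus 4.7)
The plan is to split the oscillatory integral at the critical scale $|\lambda-\lambda_0|\sim |t|^{-1/2}$, apply a trivial $L^1$ bound on the near piece, and integrate by parts on the far piece using the fact that $|\phi'(\lambda)|\gtrsim |\lambda-\lambda_0|$.

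First I would introduce a smooth cutoff $\eta$ with $\eta(s)=1$ for $|s|\le 1$ and $\eta(s)=0$ for $|s|\ge 2$, and write $a(\lambda)=a(\lambda)\eta(|t|^{1/2}(\lambda-\lambda_0))+a(\lambda)\bigl(1-\eta(|t|^{1/2}(\lambda-\lambda_0))\bigr)=:a_{\rm near}(\lambda)+a_{\rm far}(\lambda)$. Using a smooth rather than a sharp cutoff avoids awkward boundary contributions after integration by parts. The near piece contributes
\[
\Bigl|\int e^{it\phi(\lambda)}a_{\rm near}(\lambda)\,d\lambda\Bigr|\le \int_{|\lambda-\lambda_0|<2|t|^{-1/2}}|a(\lambda)|\,d\lambda,
\]
which is of the required form (after a harmless constant on the radius).

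For the far piece I would write $e^{it\phi(\lambda)}=\frac{1}{it\phi'(\lambda)}\frac{d}{d\lambda}e^{it\phi(\lambda)}$ and integrate by parts once. Because the cutoff is smooth and compactly supported away from $\lambda_0$, there are no boundary terms, and we obtain
\[
\int e^{it\phi(\lambda)}a_{\rm far}(\lambda)\,d\lambda
=-\frac{1}{it}\int e^{it\phi(\lambda)}\frac{d}{d\lambda}\Bigl(\frac{a_{\rm far}(\lambda)}{\phi'(\lambda)}\Bigr)\,d\lambda.
\]
Since $\phi'(\lambda_0)=0$ and $\phi''\ge 1$, the mean value theorem gives $|\phi'(\lambda)|\gtrsim |\lambda-\lambda_0|$, while $|\phi''|\le C$ by hypothesis. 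Expanding the derivative produces three terms: $a'_{\rm far}/\phi'$, bounded by $|a'(\lambda)|/|\lambda-\lambda_0|$; $a_{\rm far}\phi''/(\phi')^2$, bounded by $|a(\lambda)|/|\lambda-\lambda_0|^2$; and a cutoff-derivative term of the form $|t|^{1/2}\eta'(|t|^{1/2}(\lambda-\lambda_0))\,a(\lambda)/\phi'(\lambda)$ supported on $|\lambda-\lambda_0|\sim |t|^{-1/2}$. On this annulus $1/|\phi'|\lesssim |t|^{1/2}$, so the cutoff-derivative term is bounded by $|t|\cdot |a(\lambda)|$ on a set of measure $\sim |t|^{-1/2}$, giving a contribution that can be absorbed back into the near integral (possibly after enlarging its radius by a constant).

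The only subtle point — and what I expect to be the main technical obstacle — is this third term coming from $\eta'$, which only disappears cleanly because the $1/|\phi'|$ factor is saturated precisely where $\eta'$ is supported; keeping careful track of constants is what distinguishes this soft argument from the cruder estimate $|t|^{-1/2}\|a\|_\infty$. Once these three pieces are added up and the $1/(it)$ prefactor from the integration by parts is included, one obtains exactly the claimed bound (up to fixed constants in the radius of the near interval, which can be absorbed into the $\lesssim$).
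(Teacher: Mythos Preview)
Your argument is correct and is the standard proof of this stationary-phase bound; the paper itself does not supply a proof but simply cites the lemma from Schlag~\cite{Sc2}. One minor quibble: the far cutoff $1-\eta$ is not compactly supported, so your stated justification ``the cutoff is smooth and compactly supported away from $\lambda_0$, there are no boundary terms'' does not by itself dispose of the boundary contributions at $\pm\infty$. In every use of the lemma in this paper the amplitude $a$ already carries the compactly supported factor $\chi(\lambda)$, so the issue is moot; alternatively one may assume the right-hand side is finite (otherwise the estimate is vacuous) and pass through a truncation.
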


	Rather than use the expansions of Lemma~\ref{lem:R0exp}, we need to utilize finer
	cancellation properties of the free resolvents than
	can be captured in these expansions.

	We note that
	by \eqref{Hankel} and the definition of the Hankel
	functions, we have
	\begin{align}\label{Jn canc}
		[R_0^+-R_0^-](\lambda^2)(x,y)=
		\frac{i}{2}\bigg(\frac{\lambda}{2\pi |x-y|}
		\bigg)^{\frac{n}{2}-1} J_{\f n2 -1}(\lambda |x-y|)
	\end{align}
	Noting \eqref{Jn low}, for $\lambda|x-y|\ll 1$, 
	we have
	\begin{align}
		[R_0^+-R_0^-](\lambda^2)(x,y)&=
		\frac{i}{2}\bigg(\frac{\lambda}{2\pi |x-y|}
		\bigg)^{\frac{n}{2}-1} \bigg(\frac{\lambda|x-y|}{2}\bigg)^{\f n2 -1}
		\sum_{k=0}^\infty c_k (\lambda |x-y|)^{2k}\nn\\
		&=\lambda^{n-2} G_{n-2}^c+\widetilde O(\lambda^{n-2} (\lambda|x-y|)^{\epsilon}),
		\qquad 0\leq \epsilon <2. \label{R0diffeps}
	\end{align}
	In particular, we note that there are no logarithms
	in this expansion. 
	On the other hand, if $\lambda|x-y|\gtrsim 1$, using
	\eqref{Jn asymp}, we have
	\begin{align}\label{eq:freediffhigh}
		[R_0^+-R_0^-](\lambda^2)(x,y)&=
		\frac{\lambda^{\f n2 -1}}{|x-y|^{\f n2 -1}}
			\bigg(e^{i\lambda|x-y|}\omega_+(\lambda|x-y|)
		+e^{-i\lambda|x-y|}\omega_-(\lambda|x-y|)
		\bigg).
	\end{align}

\begin{lemma}\label{lem:Jn est}

	We have the expansion	
	$$
		[R_0^+-R_0^-](\lambda^2)(x,y)
		=\widetilde O_{\f n2-1}(\lambda^{n-2})
	$$

\end{lemma}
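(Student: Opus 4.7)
The proof rests on the observation that the logarithmic singularities at $\lambda = 0$ that appear in the individual resolvents $R_0^\pm(\lambda^2)$ cancel in the difference. Indeed, from \eqref{Jn canc} the difference is a scalar multiple of $\lambda^{\f n2 -1}|x-y|^{-(\f n2 -1)} J_{\f n2 -1}(\lambda|x-y|)$, and the power series \eqref{Jn low} for $J_{\f n2 -1}$ is entire and contains no $\log$ terms; the companion Bessel function $Y_{\f n2 -1}$, in whose expansion \eqref{Yn low} the logarithms live, does not appear. I will split the estimate into the regimes $\lambda|x-y| \lesssim 1$ and $\lambda|x-y| \gtrsim 1$.

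In the low regime, substituting \eqref{Jn low} into \eqref{Jn canc} yields the convergent series
$$[R_0^+ - R_0^-](\lambda^2)(x,y) = \sum_{k=0}^\infty c_k \lambda^{n-2+2k}|x-y|^{2k}.$$
Term-by-term differentiation, justified by the analyticity of $J_{\f n2 -1}$, shows that the $j$-th $\lambda$-derivative for $0 \leq j \leq \f n2 -1$ is a sum of contributions of the form $\lambda^{n-2+2k-j}|x-y|^{2k}$, each dominated by $\lambda^{n-2-j}$ in this regime, with a uniformly controlled tail.

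In the high regime I use the asymptotic representation \eqref{eq:freediffhigh} together with $\omega_\pm(z) = \widetilde O(z^{-1/2})$. Applying Leibniz to $\lambda^{\f n2 -1}|x-y|^{-(\f n2 -1)}e^{\pm i\lambda|x-y|}\omega_\pm(\lambda|x-y|)$ and distributing $j \leq \f n2 -1$ derivatives, every derivative of the prefactor $\lambda^{\f n2 -1}$ or of $\omega_\pm(\lambda|x-y|)$ effectively replaces $\lambda$ by $\lambda^{-1}$ (using $\omega_\pm^{(k)}(z) = O(z^{-1/2-k})$ and collecting the paired $|x-y|^k$ factors), while each derivative of the exponential produces a factor of $|x-y|$. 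Since $|x-y| \gtrsim \lambda^{-1}$ in this regime, the latter dominates and the worst case gives
$$\bigl|\partial_\lambda^j [R_0^+-R_0^-](\lambda^2)(x,y)\bigr| \lesssim \lambda^{\f n2 -3/2}|x-y|^{j-\f n2 + 1/2}.$$
Because $j - \f n2 + 1/2 \leq -1/2$ for $j \leq \f n2 -1$, another application of $|x-y|^{-1} \lesssim \lambda$ reduces the right-hand side to $\lambda^{n-2-j}$, matching the claim.

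There is no serious obstacle here: the work is largely bookkeeping, and the only mild technical point is routing the Leibniz terms so that the $\omega_\pm$ contributions never degrade the bound. The conceptual content of the lemma is entirely that the difference of resolvents has no log singularity in $\lambda$, which is precisely what makes it usable as the clean leading-order piece in Section~\ref{sec:BS} (contrasting, e.g., with the weaker bound \eqref{eq:log-} available for a single resolvent).
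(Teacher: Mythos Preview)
Your proof is correct and follows essentially the same approach as the paper: split into the regimes $\lambda|x-y|\ll 1$ and $\lambda|x-y|\gtrsim 1$, use the power series \eqref{Jn low} via \eqref{R0diffeps} (with $\epsilon=0$) in the low regime, and in the high regime use \eqref{eq:freediffhigh} together with the derivative bookkeeping that appears as \eqref{R0 high derivs} in the proof of Lemma~\ref{lem:R0exp}. The paper's own proof is a one-line reference to those three displays; you have simply written out the details they encode.
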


\begin{proof}

	This follows from \eqref{R0diffeps} with $\epsilon=0$,
	\eqref{eq:freediffhigh} and \eqref{R0 high derivs}
	in the proof of Lemma~\ref{lem:R0exp}.

\end{proof}

To best utilize certain cancellations between the
difference of the iterated resolvents, we note the
following algebraic fact,
\begin{align}\label{alg fact}
	\prod_{k=0}^MA_k^+-\prod_{k=0}^M A_k^-
	=\sum_{\ell=0}^M \bigg(\prod_{k=0}^{\ell-1}A_k^-\bigg)
	\big(A_\ell^+-A_\ell^-\big)\bigg(
	\prod_{k=\ell+1}^M A_k^+\bigg).
\end{align}

When applied to the summand in Proposition~\ref{bsprop} it yields
operators of the form $(R_0^- V)^j(R_0^+ - R_0^-)(VR_0^+)^\ell$,
with $j + \ell = k$.  We separate them further into cases where the
difference $R_0^+ - R_0^-$ occurs on the leading resolvent of the product
(i.e. $j = 0$), the lagging resolvent ($\ell = 0$), or a generic position in the interior.

The first case of the difference occuring on a leading or lagging resolvent is the most delicate.  If the difference
acts on an inner resolvent, we obtain an extra
$\lambda^{n-2}$ smallness from Lemma~\ref{lem:Jn est}.
This extra smallness, along with using some recurrence
relationships for the free resolvents in Lemma~\ref{recurrence} allow us to avoid using expansions
for the leading and lagging resolvents to more easily
obtain the time decay.  This is done in detail in Lemma~\ref{lem:BS2} and follows quickly from the
arguments in the more delicate case considered in 
Lemma~\ref{lem:BS1}.  

With respect to avoiding spatial weights Remark~\ref{rmk:33}
explains that we need only consider when the first
$\frac{n}{2}-1$ derivatives when integrating by parts
act on a leading (respectively lagging) resolvent.  Instead
of integrating by parts the final time, we use a modification of stationary phase from Lemma~\ref{stat phase} to attain the time decay and avoid the spatial weights.

\begin{lemma}\label{lem:BS1}

	If $|V(x)|\les \la x\ra^{-\frac{n+2}{2}-}$,
	we have the bound
	\begin{align*}
		\sup_{x,y\in \R^n} 
		\bigg|\int_0^\infty e^{it\lambda^2}
		\lambda \chi(\lambda) \big\{
		[R_0^+-R_0^-](\lambda^2)
		(VR_0^+)^k(\lambda^2)\big\}(x, y)\, d\lambda\bigg|
		\les |t|^{-\f n2}
	\end{align*}

\end{lemma}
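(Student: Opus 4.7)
The plan is to obtain the $|t|^{-n/2}$ decay by integrating by parts $\f n2 - 1$ times in $\lambda$ against the phase $e^{it\lambda^2}$ and concluding with one application of Lemma~\ref{stat phase}. The key structural input is Lemma~\ref{lem:Jn est}, which asserts $[R_0^+-R_0^-](\lambda^2) = \widetilde O_{\f n2 - 1}(\lambda^{n-2})$ without any spatial weights; moreover the polynomial low-frequency expansion \eqref{R0diffeps} shows this factor carries no logarithmic singularity in $\lambda$ at any order of differentiation. Combined with the expansion of $(VR_0^+)^k(\lambda^2)$ from Lemma~\ref{iteratedBSexp} and the decay hypothesis $|V|\les \langle z\rangle^{-(n+2)/2-}$ (which renders the intermediate $z_j$-integrations uniformly convergent in $x,y$), the composite amplitude $h(\lambda)(x,y) := \lambda\chi(\lambda)[R_0^+-R_0^-](\lambda^2)(VR_0^+)^k(\lambda^2)(x,y)$ satisfies $|\partial_\lambda^j h(\lambda)(x,y)| \les \lambda^{n-1-j}$ uniformly in $x,y$ for $0\leq j\leq \f n2 - 1$.

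Concretely, I would apply the identity $e^{it\lambda^2} = (2it\lambda)^{-1}\partial_\lambda e^{it\lambda^2}$ and integrate by parts $\f n2 - 1$ times. All boundary terms vanish because $h(\lambda)\sim \lambda^{n-1}$ at the origin and $\chi$ is compactly supported in $[0,\lambda_1]$. The remaining integral has the form $|t|^{-(n/2-1)}\int_0^\infty e^{it\lambda^2} g(\lambda)\,d\lambda$, where $g$ is a linear combination of expressions of the form $\lambda^{-a}\partial_\lambda^b h$ with $b\leq \f n2-1$ and $|g(\lambda)|\les \lambda$ on $\mathrm{supp}(\chi)$. Invoking Lemma~\ref{stat phase} with $\phi(\lambda) = \lambda^2$ and $\lambda_0 = 0$, the near-critical contribution satisfies $\int_0^{|t|^{-1/2}}|g|\,d\lambda \les \int_0^{|t|^{-1/2}}\lambda\,d\lambda \les |t|^{-1}$, and the far-critical contribution $|t|^{-1}\int_{|t|^{-1/2}}^{\lambda_1}(|g|/\lambda^2 + |g'|/\lambda)\,d\lambda$ supplies the remaining $|t|^{-1}$ once $|g'(\lambda)|$ is controlled at an appropriate order in $\lambda$ without spatial weights.

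The hard part is controlling $|g'|$, since computing it effectively invokes a $\f n2$-th derivative of the product of resolvents. By Remark~\ref{rmk:33} such a derivative produces spatial weights only in two configurations: all $\f n2$ derivatives falling on the leading resolvent, or all on the lagging resolvent. In our setting the leading factor is $[R_0^+-R_0^-](\lambda^2)$, and by \eqref{R0diffeps} its $\f n2$-th derivative has unweighted size $\lambda^{n/2-2}$ times a smooth function of $\lambda|x-z_1|$, so the all-on-leading configuration is benign. The principal obstacle is the remaining case in which all $\f n2$ derivatives land on the lagging factor $R_0^+(\lambda^2)(z_k, y)$, where a priori an $\langle y\rangle^{1/2}$ weight would appear. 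I would circumvent this by splitting the lagging resolvent according to whether $\lambda|z_k - y|$ is small or large, applying the polynomial expansion of Lemma~\ref{lem:R0exp} in the low-frequency regime and integrating by parts against the oscillatory factor $e^{i\lambda|z_k - y|}$ from \eqref{eq:freediffhigh} in the high-frequency regime, and then absorbing any residual weights into the $\lambda^{n-2}$ smallness carried by the leading difference.
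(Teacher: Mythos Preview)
Your outline has the right overall shape (integrate by parts $\tfrac n2-1$ times, then extract the last $|t|^{-1}$ by a stationary-phase argument), but the execution contains two related gaps.

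First, applying Lemma~\ref{stat phase} with $\phi(\lambda)=\lambda^2$ and $\lambda_0=0$ to the full amplitude $g$ does not close.  The leading behavior of $[R_0^+-R_0^-]$ is exactly $c\lambda^{n-2}$ (from $G_{n-2}^c$), so after $\tfrac n2-1$ integrations by parts the amplitude obeys $|g(\lambda)|\sim\lambda$, not $\lambda^{1+\epsilon}$.  Your far-critical integral then contains $\int_{|t|^{-1/2}}^{\lambda_1}|g|/\lambda^2\,d\lambda\sim\log|t|$, which loses a logarithm.  The paper sidesteps this by peeling off the exact monomial $\lambda^{n-2}G_{n-2}^c$ (handled by Lemma~\ref{lem:IBP}) and treating the genuinely smaller remainder by a direct two-fold integration by parts, using the cutoff constraint $\lambda|x-z_1|\les1$ to neutralize the residual $|x-z_1|^\epsilon$ weight.

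Second, your claim that the all-derivatives-on-leading configuration is ``benign'' is not correct in the regime $\lambda|x-z_1|\gtrsim1$.  Writing $[R_0^+-R_0^-](\lambda^2)(x,z_1)=\lambda^{n-2}F(\lambda|x-z_1|)$ with $F(z)=c\,z^{1-\frac n2}J_{\frac n2-1}(z)$, one indeed has $\partial_\lambda^{n/2}\bigl[\lambda^{n-2}F(\lambda r)\bigr]=\lambda^{\frac n2-2}G(\lambda r)$ with $G$ smooth; but $F^{(k)}(z)\sim z^{(1-n)/2}$ for large $z$ (differentiating an oscillatory function does not improve decay), so $G(z)\sim z^{1/2}$ and hence $\partial_\lambda^{n/2}[R_0^+-R_0^-]\sim\lambda^{\frac{n-3}{2}}|x-z_1|^{1/2}$.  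This is precisely the weight you sought to avoid.  The paper resolves this by \emph{not} treating the oscillation as amplitude: in the regime $\lambda|x-z_1|\gtrsim1$ it writes $[R_0^+-R_0^-]=e^{\pm i\lambda|x-z_1|}(\text{symbol})$, absorbs $e^{-i\lambda|x-z_1|}$ into the phase, and applies Lemma~\ref{stat phase} with the shifted critical point $\lambda_0=|x-z_1|/2t$.  The resulting amplitude then carries the compensating $|x-z_1|^{-1/2}$ decay (see \eqref{eq:a bds}), which is exactly what makes the $A,B,C$ estimates in the paper uniform in $x$.  Your proposal to do something analogous for the lagging resolvent is the right idea, but it must also be done for the leading difference, and it changes the stationary point --- it is not compatible with your stated choice $\lambda_0=0$.
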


\begin{proof}

	By Lemma~\ref{iteratedBSexp}, Remark~\ref{rmk:33} and the discussion
	following it, we need only consider the contribution
	when, upon integrating by parts, all of the derivatives act on the leading or lagging free resolvent.  In the proof we consider when all derivatives act on the leading difference of free
	resolvents, which we regard as the most delicate
	case.  As the remaining operator $(VR_0^+)^k\chi(\lambda)$ is left undisturbed,
	it suffices to note that it has a bounded kernel, uniformly in $\lambda$.
	The case where
	all derivatives act on the lagging free resolvent is somewhat delicate as well; 
	this term fits best in the framework 
	of Lemma~\ref{lem:BS2} below.
	
	For all other placement of derivatives, we note that if any derivatives
	act on `inner resolvents' or the 
	cut-off, an error bound with 
	polynomial weights suffices as growth in these
	variables is controlled by the decay of the 
	surrounding potentials.  Meanwhile, at most
	$\f n2-1$ derivatives would act on a leading or lagging
	resolvent so that they too can be bounded without
	weights.  

	Unlike in the odd dimensional case,
	one must consider the small 
	and large $\lambda |x-z_1|$ regimes separately.
	Using \eqref{R0diffeps},
	the small $\lambda|x-z_1|$ regime requires bounding
	\begin{align}\label{eq:Bornlowbad}
		\bigg|\int_0^\infty e^{it\lambda^2}
		\lambda \chi(\lambda) [\lambda^{n-2}+
		\chi(\lambda|x-z_1|)
		|x-z_1|^{\epsilon}\widetilde O(\lambda^{n-2+\epsilon})] \, d\lambda\bigg|	
		\les |t|^{-\f n2}.
	\end{align}
	The contribution of the first term follows from
	Lemma~\ref{lem:IBP}.  The second term is bounded
	by using a slight modification of Lemma~\ref{lem:fauxIBP}.  In particular, we can
	safely integrate by parts $\f n2-1$ times without
	boundary terms to get
	$$
		|t|^{1-\f n2}\int_0^{\infty}e^{it\lambda^2}
		\chi(\lambda) \chi(\lambda|x-z_1|)
		|x-z_1|^{\epsilon}\widetilde O(\lambda^{1+\epsilon})
		\, d\lambda.
	$$
	The integral
	can be broken up into two pieces, on $0<\lambda<|t|^{-\f12}$ we take $\epsilon=0$ and integrate to gain the extra power of $|t|^{-1}$.
	On $|t|^{-\f12}<\lambda $, we wish to gain another
	$|t|^{-1}$.  First, if no derivatives act on the
	cut-off $\chi(\lambda |x-z_1|)$ we see that
	\begin{align*}
		\int_{|t|^{-\f12}}^\infty e^{it\lambda^2} 
		\chi(\lambda) \chi(\lambda|x-z_1|)&
		|x-z_1|^{\epsilon}\widetilde O(\lambda^{1+\epsilon})
		\, d\lambda \les \frac{|x-z_1|^{\epsilon}\lambda^{\epsilon}}{|t|}
		\bigg|_{\lambda=|t|^{-\f12}}\\
		&+\frac{1}{|t|}\int_{|t|^{-\f12}}^\infty  e^{it\lambda^2} \chi(\lambda) \chi(\lambda|x-z_1|)
		|x-z_1|^{\epsilon}\widetilde O(\lambda^{\epsilon-1})\, d\lambda
	\end{align*}
	Integrating by parts again on the second term and
	taking $\epsilon>0$ small enough 
	(say $\epsilon=\f12$), we
	can bound with
	\begin{align}
		\frac{|x-z_1|^{\epsilon}\lambda^{\epsilon}}{|t|}
		\bigg|_{\lambda=|t|^{-\f12}}&+ \frac{|x-z_1|^{\epsilon}\lambda^{\epsilon-2}}{|t|^2}
		\bigg|_{\lambda=|t|^{-\f12}}+\frac{1}{|t|^2}
		\int_{|t|^{-\f12}}^\infty 
		\chi(\lambda) \chi(\lambda|x-z_1|)
		|x-z_1|^\epsilon
		\widetilde O(\lambda^{\epsilon-3})\, d\lambda\nn\\
		&\les \frac{|x-z_1|^{\epsilon}\lambda^{\epsilon}}{|t|}
		\bigg|_{\lambda=|t|^{-\f12}}+ \frac{|x-z_1|^{\epsilon}\lambda^{\epsilon-2}}{|t|^2}
		\bigg|_{\lambda=|t|^{-\f12}}+\frac{1}{|t|^2}\nn\\
		&\les |x-z_1|^{\epsilon} |t|^{-1-\f\epsilon 2}
		+|t|^{-2}\les
		|t|^{-1}\label{eq:thingy}
	\end{align}
	The last inequality follows from $1\gtrsim 
	\lambda|x-z_1|>
	|t|^{-\f12}|x-z_1|$, which implies $|x-z_1|\les |t|^{\f12}$.  We also used that $\chi'(\lambda)$
	is supported on $\lambda \approx 1$, so the bound
	$|\chi'(\lambda)|\les \lambda^{-1}$ is true.
	
	If, when integrating by parts, the derivative acts
	on the cut-off $\chi(\lambda|x-z_1|)$ we can bound by
	\begin{align*}
		\frac{|x-z_1|^{\epsilon}\lambda^\epsilon}{t}
		\bigg|_{\lambda=t^{-\f 12}}&+\frac{1}{t} 
		\int_{t^{-\f 12}}^\infty |x-z_1|^{1+\epsilon}
		\chi'(\lambda |x-z_1|) \lambda^{\epsilon}\,
		d\lambda\\
		&\les t^{-1}+\frac{|x-z_1|^{1+\epsilon}}{t}
		\int_{\lambda \sim |x-z_1|^{-1}}\lambda^\epsilon
		\, d\lambda \les t^{-1}.
	\end{align*}
	Here the boundary term is bounded by $|t|^{-1}$ as
	before, and the support of $\chi'(\lambda|x-z_1|)$
	implies that $\lambda \approx |x-z_1|^{-1}$.  A similar
	argument covers the case when the derivative acts
	on $\chi(\lambda|x-z_1|)$ in the second integration
	by parts in \eqref{eq:thingy}.
	
	For the $\lambda|x-z_1|\gtrsim 1$ regime, we still
	consider only the most
	delicate term arises when all the derivatives act
	on the leading difference of free resolvents.
	Without loss of generality, we take $t>0$.
	We note that the most difficult term from the
	contribution of \eqref{eq:freediffhigh}
	occurs with the negative phase.  Here,
	one has to bound
	\begin{align*}
		\int_0^\infty e^{it\lambda^2}\lambda \chi(\lambda)
		e^{-i\lambda|x-z_1|}\frac{\lambda^{\f n2 -1}}{|x-z_1|^{\f n2 -1}} \omega_-(\lambda|x-z_1|)
		\, d\lambda
	\end{align*}
	We note that the $\lambda$ smallness and the 
	support of the cut-off $\chi(\lambda)$ allow us to
	integrate by parts $\f n2-1$ times without boundary
	terms, noting
	the second to last bound in 
	\eqref{R0 high derivs} with $k=\f n2 -1$, we need
	to control
	\begin{align}\label{a osc int}
		\frac{1}{|t|^{\f n2-1}} \int_0^\infty e^{it\lambda^2-i\lambda|x-z_1|} \chi(\lambda)
		a(\lambda) \, d\lambda
	\end{align}
	where by \eqref{Jn asymp},
	\begin{align}\label{eq:a bds}
		|a(\lambda)|\les \frac{\lambda^{\f 12}}
		{|x-z_1|^{\f12}}, \qquad
		|a'(\lambda)|\les \frac{1}{\lambda^{\f12}|x-z_1|^{\f 12}}.
	\end{align}
	The stationary point of the phase occurs at
	$\lambda_0=\frac{|x-z_1|}{2t}$.
	By Lemma~\ref{stat phase}, we need to bound
	three integrals,
	\begin{multline}\label{eq:ABC}
		\int_{|\lambda-\lambda_0|<t^{-\f12}}|a(\lambda)|\,
		d\lambda +|t|^{-1} \int_{|\lambda-\lambda_0|>t^{-\f12}}
		\bigg(
		\frac{|a(\lambda)|}{|\lambda-\lambda_0|^2}+
		\frac{|a'(\lambda)|}{|\lambda-\lambda_0|}\bigg)\, 
		d\lambda\\ :=A+|t|^{-1}(B+C).
	\end{multline}
	We begin by showing that $A\les |t|^{-1}$.  There
	are two cases to consider.  First, if $\lambda_0\gtrsim t^{-\f12}$, we have $\lambda \les \lambda_0$, so that
	$$
		A\les \int_{|\lambda-\lambda_0|<t^{-\f12}}
		\frac{\lambda_0^{\f12}}{|x-z_1|^{\f12}}\, 
		d\lambda \les t^{-\f12} \lambda_0^{\f 12}|x-z_1|^{-\f12} \les t^{-1}.
	$$
	Here we used that $\lambda_0=|x-z_1|/2t$ in the
	last inequality.  
	
	In the second case one has $\lambda_0\les t^{-\f12}$,
	then $\lambda \les t^{-\f12}$, so that
	$$
		A \les \int_0^{t^{-\f12}} \frac{\lambda^{\f12}}{|x-z_1|^{\f12}} 
		\widetilde \chi(\lambda |x-z_1|)\, d\lambda
		\les t^{-\f34}|x-z_1|^{-\f12}.
	$$
	Here $\widetilde \chi =1-\chi$ is a cut-off away
	from zero which we employ to emphasize the support
	condition that 
	$\lambda |x-z_1|\gtrsim 1$.  For this integral to have
	a non-zero contribution, one must have
	$|x-z_1|^{-1}\les \lambda \les t^{-\f12}$, which then
	yields $A\les t^{-1}$ as desired.
	
	We now move to bounding $B$, the 
	first integral supported on
	$|\lambda-\lambda_0|\gtrsim t^{-\f12}$.  By Lemma~\ref{stat phase}, we need only show that
	$B\les 1$.  Again we 
	consider two cases.  First, if $\lambda_0\ll t^{-\f12}$, one sees that $|\lambda-\lambda_0|\approx 
	\lambda$.  So that
	$$
		B \les \int_{\R } \frac{\widetilde \chi(\lambda |x-z_1|)}{|x-z_1|^{\f12}\lambda^{\f32}}\, d\lambda
		\les |x-z_1|^{-\f12} \int_{|x-z_1|^{-1}}^\infty
		\lambda^{-\f32}\, d\lambda
		\les 1.
	$$
	In the second case one has $\lambda_0\gtrsim t^{-\f12}$.  In this case, we let $s=\lambda-\lambda_0$
	\begin{align*}
		B &\les \int_{|s|>t^{-\f12}} 
		\frac{(s+\lambda_0)^{\f12}}{|x-z_1|^{\f12}|s|^2}\, ds \les \frac{1}{|x-z_1|^{\f12}} \bigg(\int_{|s|>t^{-\f12}} s^{-\f32}+\lambda_0^{\f12} s^{-2}\, ds
		\bigg)\\
		&\les \frac{t^{\f14}}{|x-z_1|^{\f12}}+ \frac{t^{\f12}\lambda_0^{\f12}}{|x-z_1|^{\f12}}
		\les 1.
	\end{align*}
	The last inequality follows since $t^{-\f12}\les \lambda_0=|x-z_1|/2t$ implies that $t^{\f12}\les |x-z_1|$.

	We now turn to the final term $C$, we need only show
	$C\les 1$.  The first case is again when
	$\lambda_0\ll t^{-\f 12}$, in which case
	$|\lambda-\lambda_0|\approx \lambda$, and
	$$
		C\les \int_{\R} \frac{\widetilde \chi(\lambda|x-z_1|)}{\lambda^{\f32}|x-z_1|^{\f12}}
		\, d\lambda \les 1.
	$$
	In the second case $\lambda_0\gtrsim t^{-\f12}$, 
	which yields that $|x-z_1|\gtrsim t^{\f12}$.
	In this case,
	\begin{align*}
		C &\les \int_{|\lambda-\lambda_0|>t^{-\f12}}
		\frac{ \widetilde \chi(\lambda|x-z_1|)}{|x-z_1|^{\f12}
		\lambda^{\f12}|\lambda-\lambda_0|}\, d\lambda\\
		&\les |x-z_1|^{-\f12}\bigg(
		\int_{|\lambda-\lambda_0|>t^{-\f12}} \frac{ d\lambda}{|\lambda-\lambda_0|^{\f32}}+
		\int_{\R} \frac{ \widetilde \chi(\lambda|x-z_1|)}
		{\lambda^{\f32}}\, d\lambda
		\bigg) \les t^{\f14}|x-z_1|^{-\f12}+1 \les 1.
	\end{align*}

	We note that if the `+' phase is encountered instead
	of the `-', in place of \eqref{a osc int}, after
	again integrating by parts $\f n2-1$ times, one
	needs to bound
	\begin{align}\label{a osc int2}
		\frac{1}{|t|^{\f n2-1}} \int_0^\infty e^{it\lambda^2+i\lambda|x-z_1|} \chi(\lambda)
		a(\lambda) \, d\lambda
	\end{align}
	In which case, one can simply use that
	$\frac{d}{d\lambda}(e^{it\lambda^2+i\lambda|x-z_1|})
	=(2it\lambda+i|x-z_1|)e^{it\lambda^2+i\lambda|x-z_1|}$
	and integrate by parts.  The bound on $a(\lambda)$
	shows that the boundary terms are zero, so that
	$$
		|\eqref{a osc int2}| \les 
		|t|^{1-\f n2}\int_0^{\infty}
		\bigg|\frac{ \lambda^{-\f12}|x-z_1|^{-\f12}
		\widetilde \chi(\lambda|x-z_1|)}{2t\lambda+|x-z_1|}
		\bigg| \, d\lambda
		\les |t|^{-\f n2} \int_{\R } \frac{\widetilde \chi(\lambda |x-z_1|)}{\lambda^{\f32}|x-z_1|^{\f12}}
		\, d\lambda \les |t|^{-\f n2}.
	$$

The assumed decay rate on the potential is chosen
so that all spatial integrals are absolutely convergent.  
The analysis here is essentially the same as in the
odd dimensional case.
We note that
\begin{align}\label{eq:R0bs}
	|\partial_\lambda^j R_0^\pm(\lambda^2)(x,y)| \les 
	|x-y|^{j+2-n}+\lambda^{\frac{n-3}{2}}
	|x-y|^{j+\frac{1-n}{2}},
\end{align}
as developed in the proof of
Lemma~\ref{lem:R0exp}.  The second term decays more slowly
for large $x, y$, so it dictates the decay requirements for the potential.
In the iterated resolvent, differentiated $\f n2$ times,
we need to control integrals of the form
$$
	\int_{\R^{kn}}\frac{1}{|x-z_1|^{\frac{n-1}{2}-\alpha_0}}
	\prod_{j=1}^k \frac{V(z_j)}{|z_j-z_{j+1}|^{\frac{n-1}{2}-\alpha_j}}
	d\vec{z},
$$
where $\alpha_j\in \mathbb N_0$ and $\sum \alpha_j=\frac{n}{2}$, $z_{k+1}=y$ and
$d\vec z=dz_1\, dz_2\, \cdots \, dz_k$.
(There is a caveat  that
if $\alpha_0=\frac{n}{2}$ then the last derivative is applied
as in the stationary phase argument~\eqref{a osc int}
and does not yield a factor of $|x-z_1|^{\f 12}$ in the numerator.
Similarly if $\alpha_k=\frac{n}{2}$, the 
value of $\frac{n-1}{2} - \alpha_k$ should be treated as zero rather than $-\f 12$.)
Using arithmetic-geometric mean inequalities, 
any integral
we need to control is dominated by the sum
\begin{align*}
	\int_{\R^{kn}}\frac{1}{|x-z_1|^{\frac{n-1}{2}}}
	\prod_{j=1}^k \frac{V(z_j)}{|z_j-z_{j+1}|^{\frac{n-1}{2}}}
	\bigg(& |x-z_1|^{\frac{n-1}{2}}\\
	&+
	\sum_{\ell=2}^{k-1} |z_\ell-z_{\ell+1}|^{\frac{n}{2}} + |z_k-y|^{\frac{n-1}{2}}
	\bigg)
	d\vec{z}.
\end{align*}
Choose a representative element from the summation over $\ell$.  This negates a factor
of $|z_\ell - z_{\ell+1}|^{(1-n)/2}$ in the product and replaces it with
$|z_\ell - z_{\ell+1}|^{\f 12} \les \la z_\ell\ra^\f 12 \la z_{\ell+1}\ra^{\f 12}$.
With $|V(z_j)|\les \la z_j \ra^{-\beta}$, we have to control an integral
of the form
\begin{equation*}
\int_{\R^{kn}}\frac{1}{|x-z_1|^{\frac{n-1}{2}}}\biggl(\prod_{j=1}^{\ell-1}
\frac{\la z_j\ra^{-\beta}}{|z_j - z_{j+1}|^{\frac{n-1}{2}}}\la z_\ell\ra^{\f 12}\biggr)
\biggl(\la z_{\ell+1}\ra^{\f 12}\prod_{j=\ell}^k \frac{\la z_j\ra^{-\beta}}{|z_j - z_{j+1}|^{\frac{n-1}{2}}}\biggr)\,d\vec{z}
\end{equation*}
with $y = z_{k+1}$.
Assuming that $\beta > \frac{n+2}{2}$, this is bounded uniformly in $x,y$ by iterating the single integral
estimate
\begin{equation} \label{eq:simpiterated}
	\sup_{z_{j-1} \in \R^n} \int_{\R^n} \frac{\la z_j \ra^{\f12-\beta}}{|z_{j-1}-z_j|^{\frac{n-1}{2}}}
	\, dz_j \les 1,
\end{equation}
starting with $j = \ell$ we can iterate the above
bound and work outward the integating in $z_{\ell+1}$ to
$z_k$ and $z_{\ell-1}$ to $z_1$.

To make certain that the local singularities of the resolvent are integrable
uniformly in $x$ and $y$,
cancellation in the first factor $(R_0^+(\lambda^2) - R_0^-(\lambda^2))(x, z_1)$
is crucial.  By~\eqref{Jn canc}, this is a bounded function of the spatial variables.
Differentiation of resolvents with respect to $\lambda$ generally improves their 
local regularity, so for this purpose the worst case is when all derivatives act on the
cut-off function $\chi(\lambda)$ instead.
Then we are left to control an integral of the form
\begin{equation*}
 \int_{\R^{kn}} \biggl(\prod_{j=1}^{k-1} \frac{\la z_j \ra^{-\beta}}{|z_j - z_{j+1}|^{n-2}}\biggr)
\frac{\la z_k\ra^{-\beta}}{|z_k - y|^{n-2}}\,d\vec{z},
\end{equation*}
which is bounded so long as $\beta > 2$, using an estimate analogous to~\eqref{eq:simpiterated}.  We note that the
lack of the $|x-z_1|^{2-n}$ singular terms is vital to
this iterated integral being bounded for any $k=1,2,\dots$.
If the `+/-' difference acts on an inner resolvent,
say on $R_0^+(\lambda^2)(z_\ell,z_{\ell+1})-R_0^-(\lambda^2)(z_\ell,z_{\ell+1})$ we are lead to bound
\begin{equation*}
 \int_{\R^{kn}} \frac{1}{|x - z_1|^{n-2}}
 \biggl(\prod_{j=1}^{k-1} \frac{\la z_j \ra^{-\beta}}{|z_j - z_{j+1}|^{n-2}}\biggr)
\frac{\la z_k\ra^{-\beta}|z_\ell-z_{\ell+1}|^{n-2}}{|z_k - y|^{n-2}}\,d\vec{z},
\end{equation*}
Here,
one simply integrates $d\vec{z}$ first in the $z_\ell$ variable and proceed outward
through the rest of the product.


\end{proof}

We still need to consider the case in which all
derivatives act on the leading or lagging free
resolvent and the `+/-' difference affects a different
free resolvent, that is we wish to control the 
contribution of
\begin{align}\label{eq:innerdiff}
	\lambda \bigg[\bigg(\frac{1}{\lambda}\frac{d}
	{d\lambda}	\bigg)^{\f n2-1} R_0^-(\lambda^2)
	 \bigg] V (R_0^-(\lambda^2)V)^j  (R_0^+(\lambda^2)-
	 R_0^-(\lambda^2)) (VR_0^+(\lambda^2))^\ell,
	 \qquad j, \ell\geq 0
\end{align}
Here if we simply integrate by part the final time,
we have polynomial weights in the spatial variables when
the final derivative also acts on the leading free
resolvent.  As noted in the discussion preceeding
Lemma~\ref{lem:BS1}, this is somehow simpler than the
previous case.  In particular, the argument follows
using the techniques of the previous lemma, and
the resulting calculation is streamlined using the following Lemma.  We first  define $\mathcal G_n (\lambda,|x-y|)$ to be the
kernel of the $n$-dimensional free resolvent operator
$R_0^+(\lambda^2)$, and hence $\mathcal G_n (-\lambda,|x-y|)$ is the kernel of $R_0^-(\lambda^2)$,
 then
    \begin{lemma}\label{recurrence}
            For $n\geq 2$, the following recurrence relation holds.
        \begin{align*}
            \left(\frac{1}{\lambda}\frac{d}{d\lambda}\right) \mathcal G_n       
            (\lambda, r)
            =\frac{1}{2\pi}\mathcal G_{n-2}(\lambda,r).
        \end{align*}
    
    \end{lemma}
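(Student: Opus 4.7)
The proof will be a direct calculation using the explicit Hankel function representation of the free resolvent kernel together with a standard Bessel function recurrence. From \eqref{Hankel}, with $z = \lambda^2$ so that $z^{1/2} = \lambda$, the kernel can be written as
\begin{equation*}
\mathcal G_n(\lambda, r) = \frac{i}{4}\left(\frac{1}{2\pi r}\right)^{\frac{n}{2}-1}\lambda^{\frac{n}{2}-1} H^{(1)}_{\frac{n}{2}-1}(\lambda r).
\end{equation*}
Setting $\nu := \frac{n}{2}-1$, the plan is to apply the differentiation operator $\frac{1}{\lambda}\frac{d}{d\lambda}$ to the factor $\lambda^\nu H^{(1)}_\nu(\lambda r)$, since all other factors are independent of $\lambda$.

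The key identity I would invoke is the standard Bessel recurrence $\frac{d}{dz}\bigl[z^\nu H^{(1)}_\nu(z)\bigr] = z^\nu H^{(1)}_{\nu-1}(z)$, which holds for all Hankel functions of the first kind. Substituting $z = \lambda r$ and using the chain rule yields
\begin{equation*}
\frac{d}{d\lambda}\bigl[\lambda^\nu H^{(1)}_\nu(\lambda r)\bigr] = r^{-\nu}\cdot r \cdot (\lambda r)^\nu H^{(1)}_{\nu-1}(\lambda r) = r \lambda^\nu H^{(1)}_{\nu-1}(\lambda r),
\end{equation*}
so that $\frac{1}{\lambda}\frac{d}{d\lambda}\bigl[\lambda^\nu H^{(1)}_\nu(\lambda r)\bigr] = r \lambda^{\nu-1} H^{(1)}_{\nu-1}(\lambda r)$.

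Applying this to $\mathcal G_n(\lambda, r)$ produces
\begin{equation*}
\frac{1}{\lambda}\frac{d}{d\lambda}\mathcal G_n(\lambda, r) = \frac{i}{4}(2\pi)^{1-n/2} r^{2-n/2}\lambda^{\nu-1} H^{(1)}_{\nu-1}(\lambda r).
\end{equation*}
The final step is to recognize the right-hand side. Replacing $n$ by $n-2$ in the definition of $\mathcal G_n$ gives $\mathcal G_{n-2}(\lambda, r) = \frac{i}{4}(2\pi)^{2-n/2} r^{2-n/2}\lambda^{\nu-1} H^{(1)}_{\nu-1}(\lambda r)$, and comparing the prefactors yields exactly a ratio of $2\pi$, producing the claimed identity. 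There is no real obstacle here; the only care required is to match the powers of $\lambda$ and $r$ in the prefactor with those produced by shifting the dimension by two, which works out cleanly because both the geometric weight $r^{1-n/2}$ and the order $\nu$ of the Hankel function shift by exactly one.
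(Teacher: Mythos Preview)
Your proof is correct and follows exactly the approach indicated in the paper: apply the standard Hankel recurrence $\frac{d}{dz}[z^\nu H^{(1)}_\nu(z)] = z^\nu H^{(1)}_{\nu-1}(z)$ to the representation \eqref{Hankel}. The paper merely cites this recurrence from \cite{AS} without writing out the computation, so your version is a faithful (and more detailed) execution of the same argument.
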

    
    \begin{proof}
        The proof follows from the recurrence relations of the Hankel functions,
        found in \cite{AS} and the representation of the kernel given in
        \eqref{Hankel}.
        
    \end{proof}

This tells us that the
action of $\frac{1}{\lambda}\frac{d}{d\lambda}$ takes
an $n$-dimensional free resolvent to an $n-2$ dimensional
free resolvent.
With this, we are now ready to prove
\begin{lemma}\label{lem:BS2}

	If $|V(x)|\les \la x\ra^{-\frac{n+2}{2}-}$ and 
	$j,\ell \geq 0$,
	we have the bound
	\begin{align*}
		\sup_{x,y\in \R^n} 
		\bigg|\int_0^\infty e^{it\lambda^2}
		\lambda \chi(\lambda) 
		[R_0^-(\lambda^2)V(R_0^-(\lambda^2)V)^j [R_0^+- R_0^-](\lambda^2) (VR_0^+(\lambda^2))^\ell](x,y)\, d\lambda\bigg|
		\les |t|^{-\f n2}.
	\end{align*}

\end{lemma}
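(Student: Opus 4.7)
The main advantage over Lemma~\ref{lem:BS1} is that the difference $[R_0^+-R_0^-](\lambda^2)$ now sits in an interior (or lagging) position of the product, so by Lemma~\ref{lem:Jn est} it contributes an extra $\widetilde{O}_{\f n2-1}(\lambda^{n-2})$ of smallness without any dependence on the endpoints $x$ and $y$. Together with the $\lambda\chi(\lambda)$ prefactor from the Stone formula, the amplitude already vanishes like $\lambda^{n-1}$ at the origin, and the plan is to integrate by parts $\f n2$ times in $\lambda$, gaining $|t|^{-\f n2}$ with no boundary terms, while using Lemma~\ref{recurrence} to prevent derivatives on the leading and lagging free resolvents from forcing spatial weights on $x$ or $y$.

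Concretely, I would set up the IBPs using $\frac{d}{d\lambda}(e^{it\lambda^2}) = 2it\lambda e^{it\lambda^2}$: the first IBP absorbs the factor of $\lambda$ and places one copy of $\frac{d}{d\lambda}$ onto the amplitude, and each subsequent IBP (after reinserting a factor of $\lambda$) produces one further copy of $\frac{1}{\lambda}\frac{d}{d\lambda}$. By the Leibniz rule these derivatives distribute over $R_0^-(\lambda^2)(x,z_1)$, the interior $R_0^\pm(\lambda^2)(z_i,z_{i+1})$'s, the inner difference, the lagging resolvent, and $\chi(\lambda)$. Derivatives landing on the inner difference remain $\widetilde{O}(\lambda^{n-2-\alpha})$ by Lemma~\ref{lem:Jn est}, while derivatives landing on interior resolvents are controlled by the pointwise bounds
\begin{equation*}
|\partial_\lambda^k R_0^\pm(\lambda^2)(z,z')| \les |z-z'|^{k+2-n} + \lambda^{\frac{n-3}{2}}|z-z'|^{k+\frac{1-n}{2}}
\end{equation*}
from the proof of Lemma~\ref{lem:R0exp}, with the resulting spatial growth absorbed by the adjacent potentials exactly as in the iterated estimates~\eqref{eq:simpiterated}.

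The delicate case is when many derivatives accumulate on the leading $R_0^-(\lambda^2)(x,z_1)$ or, when $\ell \ge 1$, on the lagging $R_0^+(\lambda^2)(z_k,y)$, since those factors are not shielded by a potential on one side. Here Lemma~\ref{recurrence} is the central tool: because our IBPs produce precisely $\frac{1}{\lambda}\frac{d}{d\lambda}$ operators, each application to an $m$-dimensional free resolvent replaces it with an $(m-2)$-dimensional one, reducing its spatial singularity by two orders. After $k$ such applications the leading resolvent becomes, up to constants, the $(n-2k)$-dimensional free resolvent in $|x-z_1|$ with kernel singularity of order $n-2-2k$; once $k \geq \f n2-1$ this is at worst logarithmic, hence locally integrable in $z_1$ uniformly in $x$. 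An analogous reduction handles the lagging resolvent, and the case $\ell = 0$, where the differenced resolvent is itself at the lagging end, falls directly under Lemma~\ref{lem:Jn est}. For the final IBP, or if we stop short at $\f n2 - 1$ IBPs, I would apply the stationary-phase estimate of Lemma~\ref{stat phase} exactly as in the treatment of~\eqref{a osc int} in Lemma~\ref{lem:BS1}, producing the remaining $|t|^{-1}$ factor without spatial weights. Uniform convergence of the resulting multilinear spatial integrals in $x,y$ then follows from~\eqref{eq:simpiterated}, given $|V(x)| \les \la x\ra^{-\frac{n+2}{2}-}$. The principal obstacle is bookkeeping: systematically checking, for every term in the Leibniz expansion, that the smallness from Lemma~\ref{lem:Jn est} together with the dimensional reduction of Lemma~\ref{recurrence} render each derivative placement harmless, so that no weights on $x$ or $y$ are ever needed.
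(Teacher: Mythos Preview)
Your proposal is correct and follows essentially the same approach as the paper: both integrate by parts $\f n2-1$ times, invoke Lemma~\ref{recurrence} to reduce the leading (or lagging) free resolvent to the two-dimensional kernel when all derivatives land there, and then finish with the stationary-phase argument from Lemma~\ref{stat phase} exactly as in the treatment of~\eqref{a osc int}. The paper streamlines matters by immediately restricting attention to the worst case (all derivatives on the leading resolvent) rather than carrying the full Leibniz expansion, and it spells out that the logarithmic behavior of the resulting $iJ_0+Y_0$ splits into a $\log\lambda$ piece absorbed by the $\widetilde O_1(\lambda^{n-2})$ factor and a $\log^-|x-z_1|$ piece that is integrable against $V(z_1)$; you should make that decomposition explicit when writing up the small-$\lambda|x-z_1|$ regime.
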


\begin{proof}
As in the proof of Lemma~\ref{lem:BS1}, 
we need only consider the case when all the
derivatives act on the leading resolvent.  The other
cases are less delicate and can be treated identically.

At this point, by using Lemma~\ref{recurrence} a total of $\frac{n}{2}-1$ times the leading free resolvent is
a constant multiple of the two-dimensional free
resolvent.  Thus, we can
we can reduce the contribution of \eqref{eq:innerdiff} to
$$
	t^{1-\f n2}\int_0^\infty e^{it\lambda^2} \chi(\lambda)
	\lambda (iJ_0(\lambda|x-\cdot|)+Y_0(\lambda|x-\cdot|))V
	\widetilde O_1(\lambda^{n-2})\, d\lambda.
$$
The Bessel functions of order zero appear as the kernel
of a two-dimensional resolvent.  The $\widetilde{O}_1(\lambda^{n-2})$
expression is much smaller than necessary ($\widetilde{O}_1(\lambda^{0+})$
would be adequate), so it can absorb singularities of the Bessel functions
with respect to $\lambda$.

 Expansions for these
Bessel functions, see \cite{AS}, \cite{Sc2} or \cite{EG}, show
that for $\lambda|x-z_1|\ll 1$,
\begin{align*}
	|iJ_0(\lambda|x-z_1|)+Y_0(\lambda|x-z_1|)|
	&=1+\log(\lambda |x-z_1|)+\widetilde O_1((\lambda|x-z_1|)^{2-}),\\
	|\partial_\lambda [iJ_0(\lambda|x-z_1|)+Y_0(\lambda|x-z_1|)]|
	&=\lambda^{-1}+\widetilde O_1((\lambda|x-z_1|)^{1-})
\end{align*}
Recall that
$$
	|(1+\log(\lambda|x-z_1|))\chi(\lambda|x-z_1|)
	\chi(\lambda)|\les 1+ |\log \lambda|+\log^- |x-z_1|
$$
The $\log \lambda = \widetilde{O}_1(\lambda^{0-})$ singularity
is easily negated by $\widetilde{O}_1(\lambda^{n-2})$ as mentioned above.
The $\log^- |x-z_1|$
singularity is integrable, and is managed by the estimate
\begin{equation*}
\sup_{x \in \R^n} \int_{\R^n} \log^-|x-z_1|\la z_1\ra^{-\beta}\, dz_1 \les 1
\end{equation*}
for any $\beta > n$.


For $\lambda|x-y|\gtrsim 1$, one has the description
\begin{equation*}
iJ_0(\lambda|x-z_1|) + Y_0(\lambda|x-z_1|) =
e^{i\lambda|x-z_1|}\omega_+(\lambda|x-z_1|)+ e^{-i\lambda|x-z_1|}\omega_-(\lambda|x-z_1|)
\end{equation*}
similar in form to \eqref{Jn asymp} but with  different
functions $\omega_\pm(z)$.  Differentiating directly with respect to $\lambda$
is not advised, as
the resulting $|x-z_1|\omega_\pm(\lambda|x-z_1|)$ term grows like $\lambda^{-\f 12}\la x\ra^{\f 12}$
for large $x$.

However this issue was encountered once before while evaluating~\eqref{a osc int}.
The same argument from Lemma~\ref{lem:BS1} applies here as well and
yields the desired unweighted bound, again with more
than enough $\lambda$ smallness to ensure the argument
runs through.


\end{proof}

This provides all we need for the proof of the main
proposition in this section.

\begin{proof}[Proof of Proposition~\ref{bsprop}]

	The proposition follows from Lemma~\ref{lem:BS1},
	the discussion following this Lemma and finally
	from Lemma~\ref{lem:BS2}.

\end{proof}

\section{Dispersive estimates: the leading terms}\label{sec:sing}

In this section we prove dispersive bounds for the
most singular $\lambda$ terms of the expansion for
$R_V^+(\lambda^2)-R_V^-(\lambda^2)$.  These terms are
sensitive to the existence of zero energy eigenvalues
and are the slowest decaying in time. 
This behavior  arises in the
last term involving the operator $M^{\pm}(\lambda)^{-1}$
in \eqref{eq:bstail}.

From the `+/-' cancellation, we need to control the
contribution of
\begin{equation}\label{eq:Minv canc}
\begin{aligned}
	(R_0^+(\lambda^2)V)^mR_0^+(\lambda^2) v
	&M^+(\lambda)^{-1}v R_0^+(\lambda^2) (V
	R_0^+(\lambda^2))^m\\
	&-(R_0^-(\lambda^2)V)^mR_0^-(\lambda^2) v
	M^-(\lambda)^{-1}v R_0^-(\lambda^2) (V
	R_0^-(\lambda^2))^m
\end{aligned}
\end{equation}
to the Stone formula, \eqref{Stone}.  Thanks to the
algebraic fact \eqref{alg fact}, we need to consider
three cases.  The difference of `+' and `-' terms may
act on the operators $M^\pm(\lambda)^{-1}$ or on the
free resolvents.  As in the treatment of the finite
Born series terms in Section~\ref{sec:BS},
if the difference acts on free resolvents
we need to distinguish if they are `inner' resolvents which
require less care than the case of `leading' or `lagging' resolvents.

\subsection{No cancellation}

We first consider the case in which there are no 
cancellation properties to take advantage of, that 
is when $P_eV1\neq 0$.

\begin{lemma}\label{lem:nocanc}

	If $P_eV1\neq 0$ and $|V(x)|\les \la x\ra^{-n-}$,
	then
	\begin{align*}
		\eqref{eq:Minv canc}&= \lambda^{n-6} P_eV1VP_e
		+ \widetilde{O}_{\f n2 - 1}(\lambda^{n-6+})
	\end{align*}
	which contributes $c|t|^{2-\f n2}P_eV1VP_e+ 
	O(|t|^{2-\f n2 +})$ to \eqref{Stone}.

	If $P_eV1\neq 0$ and $|V(x)|\les \la x\ra^{-n-4-}$,
	then
	\begin{align*}
		\eqref{eq:Minv canc}&= \lambda^{n-6} P_eV1VP_e
		+\frac{R_0^+(\lambda^2)-R_0^-(\lambda^2)}
		{\lambda^2} VP_e
		+P_eV\frac{R_0^+(\lambda^2)-R_0^-(\lambda^2)}
		{\lambda^2}
		+ \mathcal E(\lambda)
	\end{align*}
	which contributes $c|t|^{2-\f n2}P_eV1VP_e+ 
	O(|t|^{1-\f n2})$ to \eqref{Stone}.

\end{lemma}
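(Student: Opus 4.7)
The plan is to insert the expansion of $M^\pm(\lambda)^{-1}$ from Corollary~\ref{cor:Minv no cancel} into \eqref{eq:Minv canc} and use the algebraic identity \eqref{alg fact} to localize the plus-minus difference to a single factor at a time. The only non-real contributions to $M^\pm(\lambda)^{-1}$ come from the $g_j^\pm(\lambda)$ multipliers, and \eqref{eq:g diff} gives $g_1^+(\lambda)-g_1^-(\lambda)=2i\,\Im(z_1)\lambda^{n-2}$. Under the hypothesis $|V(x)|\les\la x\ra^{-n-}$ this yields
\begin{equation*}
M^+(\lambda)^{-1}-M^-(\lambda)^{-1}=c\lambda^{n-6}M_{n-6}^L+\widetilde O_{n/2-1}(\lambda^{n-6+}),
\end{equation*}
with $M_{n-6}^L=wP_eV1VP_ew$ by \eqref{Mn-6L}.

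Next I would sandwich this difference between the surrounding factors $(R_0^\pm V)^m R_0^\pm v$ and $vR_0^\pm(VR_0^\pm)^m$. Replacing each resolvent by its $\lambda\to 0$ limit $G_0^0$, using $vw=V$, and invoking the identity $G_0^0VP_e=-P_e$ (which follows from $H\psi=0\Rightarrow\psi=-G_0^0V\psi$ for every $\psi\in\text{Ran}\,P_e$) together with its adjoint $P_eVG_0^0=-P_e$, the leading operator collapses to $c\lambda^{n-6}P_eV1VP_e$, matching the stated leading term. Time integration via Lemma~\ref{lem:IBP} then produces the advertised $c|t|^{2-\f n2}P_eV1VP_e$ contribution to \eqref{Stone}. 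Replacing any single resolvent by the next term in its expansion costs an extra $\lambda^2$, so these corrections produce $O(|t|^{1-\f n2})$ (or better) in time; the remainder $\widetilde O_{n/2-1}(\lambda^{n-6+})$ similarly integrates to a strictly lower order.

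The remaining contributions arise from allocating the plus-minus difference to one of the free-resolvent factors rather than to $M^\pm(\lambda)^{-1}$. For these, Lemma~\ref{lem:Jn est} provides $\lambda^{n-2}$ smallness for $R_0^+-R_0^-$; paired with the most singular $-D_1/\lambda^2$ piece of $M^\pm(\lambda)^{-1}$, this gives $\widetilde O(\lambda^{n-4})$. In the $\beta>n$ case the resulting $|t|^{1-\f n2}$ decay is absorbed into the error. In the sharper $\beta>n+4$ case the same pairing is not pointwise small enough to absorb into $\mathcal E(\lambda)$, so it must be extracted explicitly as the two terms $\frac{R_0^+-R_0^-}{\lambda^2}VP_e$ and $P_eV\frac{R_0^+-R_0^-}{\lambda^2}$ appearing in the statement; the extra operators in the improved expansion \eqref{Minv eqn2} isolate precisely these pieces, which each generate $O(|t|^{1-\f n2})$ via the near-free evolution $\int e^{it\lambda^2}\lambda\chi(\lambda)(R_0^+-R_0^-)/\lambda^2\, d\lambda$.

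The main obstacle is obtaining these bounds uniformly in $(x,y)$, i.e.\ as $L^1\to L^\infty$ operator bounds without spatial weights. When $\frac{n}{2}$ derivatives needed for integration by parts all land on a leading or lagging free resolvent, naive integration by parts introduces a $\la x\ra^{1/2}$ or $\la y\ra^{1/2}$ weight, as noted in Remark~\ref{rmk:33}. The remedy is to use the stationary-phase substitute Lemma~\ref{stat phase} together with the Hankel recurrence Lemma~\ref{recurrence} exactly as in Lemmas~\ref{lem:BS1} and \ref{lem:BS2}; these estimates apply here without modification because the outer factors are structurally identical to those in the finite Born series.
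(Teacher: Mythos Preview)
Your proposal is correct and follows essentially the same route as the paper: decompose \eqref{eq:Minv canc} via \eqref{alg fact}, use Corollary~\ref{cor:Minv no cancel} and \eqref{Mn-6L} to identify the $\lambda^{n-6}P_eV1VP_e$ piece when the $+/-$ difference hits $M^\pm(\lambda)^{-1}$, and for the resolvent-difference cases invoke Lemma~\ref{lem:Jn est} together with the $-D_1/\lambda^2$ singularity, extracting the leading/lagging pieces in the second case for treatment by Lemma~\ref{lem:2canc}. One small clarification: the displayed terms $\frac{R_0^+-R_0^-}{\lambda^2}VP_e$ and its adjoint arise solely from the $+/-$ difference landing on the outermost free resolvents, not from the refined expansion \eqref{Minv eqn2}; the role of \eqref{Minv eqn2} is rather to push the $M^+-M^-$ and interior-resolvent contributions down to order $\lambda^{n-4}$ in the $\beta>n+4$ case.
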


Here we cannot write the final error term $\mathcal E(\lambda)$ accurately as
$\widetilde O_k(\lambda^\alpha)$, as there are too
many fine properties of this error term that this
notation fails to capture if one hopes to attain the
faster $|t|^{1-\f n2}$ decay rate.  One can explicitly
reconstruct $\mathcal E(\lambda)$ from our proof, though
we do not think it worthwhile to do so.

We note that the terms 
$$
	\frac{R_0^+(\lambda^2)-R_0^-(\lambda^2)}
	{\lambda^2} VP_e
	+P_eV\frac{R_0^+(\lambda^2)-R_0^-(\lambda^2)}
	{\lambda^2}
$$
appear in the expansion in all cases, see the statements of
Lemmas~\ref{lem:nocanc}, \ref{lem:1canc} and
\ref{lem:3canc}.  The different
cancellation assumptions on $P_eV1$ and $P_eVx$ allow us
some flexibility on how to control their contribution 
to \eqref{Stone}.  To avoid presenting three proofs of
how to bound these terms, which would have a certain amount of
overlap, we control these terms separately in Lemma~\ref{lem:2canc}
and Corollary~\ref{cor:2canc} below.

\begin{proof}

The first statement is a straightforward application of Lemma~\ref{lem:R0exp}
and Corollary~\ref{cor:Minv no cancel} in the context of applying \eqref{alg fact}
to~\eqref{eq:Minv canc}.
Lemmas~\ref{lem:IBP} and~\ref{lem:fauxIBP} then control the respective
integrals in~\eqref{Stone} due to the leading term and the remainder.

More precisely, the leading term appears if the `+/-' difference in~\eqref{alg fact}
falls on $M^\pm(\lambda)^{-1}$.  In that case Corollary~\ref{cor:Minv no cancel}
indicates that 
\begin{align*}
M^+(\lambda)^{-1} - M^-(\lambda)^{-1} &= \frac{g_1^+(\lambda) - g_1^-(\lambda)}{\lambda^4}
M_{n-6}^L + \widetilde{O}_{\f n2 - 1}(\lambda^{n-6+}) \\
&= 2\Im (z_1)\lambda^{n-6}M_{n-6}^L + \widetilde{O}_{\f n2 -1}(\lambda^{n-6+}),
\end{align*}
where we used~\eqref{eq:g diff} in the last line. 
Meanwhile $R_0^\pm(\lambda^2) = G_0^0 + \widetilde{O}_{\f n2 - 1}(\lambda^{0+})$.
Together with the fact that $V$ is integrable, this establishes the remainder as
$\widetilde{O}_{\f n2 - 1}(\lambda^{n-6+})$.  The operator in the leading term is seen,
using identities \eqref{S1 ident}, \eqref{D1 useful} and \eqref{Mn-6L}, to be
\begin{equation*}
(G_0^0V)^mG_0^0 vM_{n-6}^LvG_0^0(VG_0^0)^m
	=(G_0^0V)^mG_0^0 vD_1vG_{n-2}^cvD_1 vG_0^0(VG_0^0)^m=P_e V1VP_e.
\end{equation*}

If the +/- difference acts on any one of the resolvents in~\eqref{eq:Minv canc}, we see
that $R_0^+(\lambda^2) - R_0^-(\lambda^2) = \widetilde{O}_{\f n2 - 1}(\lambda^{n-2})$,
$R_0^\pm(\lambda^2)(z_j,z_{j+1})=(1+\log^-|z_j-z_{j+1}|)\widetilde O_{\f n2-1}(1)$
and $M^\pm(\lambda)^{-1} = \widetilde{O}_{\f n2 -1}(\lambda^{-2})$.  Recall that the notation
$\widetilde O_{\f n2-1}(1)$ indicates that differentiation
in $\lambda$ is comparable to division by $\lambda$.
That more than
suffices to place all of these terms in the remainder.

Now assume that $|V(x)| \les \la x \ra^{-n-4-}$. Carrying out the power series expansion
further in Corollary~\ref{cor:Minv no cancel}, one obtains
\begin{multline*}
	M^+(\lambda)^{-1}-M^-(\lambda)^{-1}=
	\frac{g_1^+(\lambda)-g_1^-(\lambda)}
	{\lambda^4}M_{n-6}^L
	+\frac{g_1^+(\lambda)-g_1^-(\lambda)}{\lambda^2}
	M_{n-4}^{L1}\\
	+\frac{g_2^+(\lambda)-g_2^-(\lambda)}
	{\lambda^4}
	M_{n-4}^{L2}
	+\widetilde{O}_{\f n2-1}
	(\lambda^{n-4+})\\
	=2\Im (z_1)\lambda^{n-6}M_{n-6}^L
	+2\Im (z_1) \lambda^{n-4} M_{n-4}^{L1}
	+ 2\Im (z_2)
	\lambda^{n-4}M_{n-4}^{L2}+\widetilde{O}_{\f n2-1}
	(\lambda^{n-4+}).
\end{multline*}
Similarly, we have $R_0^\pm(\lambda^2)(x,y) = G_0^0 + \lambda^2G_1^0
+ (1 + \log^-|x-y|)\widetilde{O}_{\f n2 - 1}(\lambda^4)$.  Thus the term
featuring $M^+(\lambda)^{-1} - M^-(\lambda)^{-1}$ has the form
\begin{equation} \label{Minv diff}
\begin{aligned}
	(G_0^0&V)^mG_0^0 v[M^+(\lambda)^{-1}-M^-(\lambda)^{-1}]
	vG_0^0(VG_0^0)^m\\
	&+[\lambda^2 \Gamma_1
		+(1+\log^- |x-\cdot|)\widetilde O_{\f n2-1}(\lambda^4)][M^+(\lambda)^{-1}-M^-(\lambda)^{-1}]
	vG_0^0(VG_0^0)^m\\
	&+(G_0^0V)^mG_0^0v[M^+(\lambda)^{-1} - M^-(\lambda^{-1}][\lambda^2 \Gamma_1
		+(1+\log^- |x-\cdot|)\widetilde O_{\f n2-1}(\lambda^4)] \\
	&+[\lambda^2 \Gamma_1
		+(1+\log^- |x-\cdot|)\widetilde O_{\f n2-1}(\lambda^4)][M^+(\lambda)^{-1}-M^-(\lambda)^{-1}]\\
	&\qquad \times [\lambda^2 \Gamma_1
	+(1+\log^-|\cdot-y|)\widetilde O_{\f n2-1}(\lambda^4)]\\
	&\hskip -.25in =\lambda^{n-6} P_eV1VP_e+ \lambda^{n-4}K_1+
	\widetilde O_{\f n2 -1}(\lambda^{n-4+})K_2
\end{aligned}
\end{equation}
with $K_1, K_2$ operators that map $L^1\to L^\infty$.

If the +/- difference falls on a free resolvent in the interior of the product, we have
\begin{equation*}
(R_0^+(\lambda^2) - R_0^-(\lambda^2))(z_j, z_{j+1}) =
c\lambda^{n-2}G_{n-2}^c + |z_j-z_{j+1}|^{0+}\widetilde{O}_{\f n2 - 1}(\lambda^{n-2+})
\end{equation*}
and $M^\pm(\lambda)^{-1} = -\lambda^{-2} D_1 +  \widetilde{O}_{\f n2 - 1}(1)$.  The resulting term of~\eqref{eq:Minv canc}
takes the form $\lambda^{n-4} K_3 + \widetilde{O}_{\f n2 - 1}(\lambda^{n-4+})$, with $K_3$ another operator from $L^1$ to $L^\infty$.

We note that the extra power
of $|z_j-z_{j+1}|^{0+}$ that appears in the remainder term is acted on by $R_0^-(\lambda^2)V$ on the left
and $VR_0^+(\lambda^2)$ on the right, so that the
decay of the potentials ensures that the product remains bounded between unweighted spaces.

The terms in which the `+/-' difference
acts on the first (or last) free resolvent are
trickier because one cannot differentiate too many times, or go too far into the power series expansion
of $R_0^+(\lambda^2)- R_0^-(\lambda^2)$
without introducing weights.  Suppose the difference acts on the leading resolvent;
the other case is identical up to symmetry.
Once again we can use the expansions for $M^\pm(\lambda)^{-1}$ and
$R_0^\pm(\lambda^2)$ along with Lemma~\ref{lem:Jn est}
to express this term as
\begin{align*}
	[R_0^+(\lambda^2)-R_0^-(\lambda^2)](VG_0^0)^m v
	\bigg(-\frac{D_1}{\lambda^2}
	\bigg)v (G_0^0V)^mG_0^0 &+ \widetilde{O}_{\f n2 - 1}(\lambda^{n-2})  \\
	= \frac{R_0^+(\lambda^2) - R_0^-(\lambda^2)}{\lambda^2}VP_e &+ \widetilde{O}_{\f n2 - 1}(\lambda^{n-2})
\end{align*}
One can quickly show using Lemma~\ref{lem:fauxIBP} that the remainder contributes at most
$|t|^{1-\f n2}$ to the Stone formula.  In fact this contribution is of the order $|t|^{-\f n2}$,
seen by adopting the methods of Lemma~\ref{lem:BS1}.  The contribution of $\lambda^{-2}(R_0^+(\lambda^2)-R_0^-(\lambda^2))VP_e$
to~\eqref{Stone} is rather intricate, and is discussed fully as Lemma~\ref{lem:2canc}.  For the purpose of this
Lemma, we note that  $\lambda^{-2}(R_0^+(\lambda^2)-R_0^-(\lambda^2))VP_e$
is bounded by $|t|^{1-\f n2}$ as an operator from 
$L^1\to L^\infty$ by Lemma~\ref{lem:2canc}, which finishes the proof.

\end{proof}

The remaining terms in the Born series are smaller than
these for large $|t|$ by Proposition~\ref{bsprop}.
In fact using the identities for $S_1$ and
Lemma~\ref{lem:eproj}, at this point we can write
\begin{align}
	e^{itH}P_{ac}(H)=c|t|^{2-\f n2} P_eV
	1VP_e+O(|t|^{1-\f n2}),
\end{align}
where the operator $P_eV1VP_e$ is rank one, and the
error term is understood as mapping $L^1$ to $L^\infty$.  The weaker
claim, with error term of size $o(|t|^{2-\f n2})$ follows
by using the first statement of Lemma~\ref{lem:1canc}.

\subsection{The case of $P_eV1=0$}

Here we consider when the operator $P_eV1=0$.  This
cancellation makes the initial term in Lemma~\ref{lem:nocanc} vanish,
clearing the way for time decay
at the faster rate of $|t|^{1-\f n2}$.  Here we provide more detail
on the behavior of the next term in the evolution.

\begin{lemma}\label{lem:1canc}

	If $P_eV1= 0$ and $|V(x)|\les \la x\ra^{-n-8-}$,
	then
	\begin{align*}
		\eqref{eq:Minv canc}&=\lambda^{n-4}\Gamma_1+
		\frac{R_0^+(\lambda^2)-R_0^-(\lambda^2)}
		{\lambda^2} VP_e
		+P_eV\frac{R_0^+(\lambda^2)-R_0^-(\lambda^2)}
		{\lambda^2}+\lambda^{n-2}\Gamma_2+
		\mathcal E(\lambda),
	\end{align*}
	where $\Gamma_1, \Gamma_2:L^1\to L^\infty$.  The
	error term belongs to the class 
	$\la x\ra^{\f 12}\la y\ra^{\f 12}\widetilde{O}_{\f n2}(\lambda^{n-2+})$, however
	its contribution to~\eqref{Stone} is $O(|t|^{-\f n2})$ without spatial weights.
	Assuming the result of Lemma~\ref{lem:2canc}, the total contribution to~\eqref{Stone}
	of all terms is $|t|^{1-\f n2}+ \la x\ra \la y\ra O(|t|^{-\f n2})$.

\end{lemma}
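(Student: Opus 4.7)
\noindent\emph{Proof proposal.}
The plan is to adapt the argument of Lemma~\ref{lem:nocanc} using the sharper expansion of $M^\pm(\lambda)^{-1}$ from Corollary~\ref{cor:Minv} that becomes available once $P_eV1 = 0$ and $|V(x)|\les \la x\ra^{-n-8-}$.  First I apply the algebraic identity~\eqref{alg fact} to~\eqref{eq:Minv canc}, sorting the summands according to where the `+/-' difference lands: on $M^\pm(\lambda)^{-1}$, on an inner free resolvent, or on the leading/lagging resolvent.  In each case I expand the undisturbed factors via Lemma~\ref{lem:R0exp} (or Corollary~\ref{cor:R0exp} whenever I need $\f n2$ derivatives, accepting the cost of a $|x-z_1|^{1/2+}$ weight) and convert the surviving $g_j^+ - g_j^-$ combinations to pure powers of $\lambda$ via~\eqref{eq:g diff}.

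The cancellation $P_eV1 = 0$ kills the $g_1^\pm(\lambda)/\lambda^4$ head of $M^\pm(\lambda)^{-1}$, so the leading non-real contributions to $M^+(\lambda)^{-1} - M^-(\lambda)^{-1}$ are scalar multiples of $\lambda^{n-4}M^{L2}_{n-4}$ and $\lambda^{n-2}(M^{L2}_{n-2}+M^{L3}_{n-2})$.  Sandwiched between the $G_0^0 + \lambda^2 G_1^0$ heads of the flanking resolvents these produce $\lambda^{n-4}\Gamma_1$ and part of $\lambda^{n-2}\Gamma_2$.  When the difference falls on an inner resolvent, Lemma~\ref{lem:Jn est} supplies a factor $\widetilde O_{\f n2 -1}(\lambda^{n-2})$, which paired with $M^\pm(\lambda)^{-1} = -D_1/\lambda^2 + \cdots$ feeds additional finite-rank $\lambda^{n-4}$ and $\lambda^{n-2}$ operators into $\Gamma_1,\Gamma_2$; a secondary use of $P_eV1 = 0$ through~\eqref{D1 useful} is needed to zero out the $\lambda^{n-4}(\log\lambda)$ cross-term that would arise from the $g_1^\pm$ piece of the inner resolvent.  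Finally, when the difference hits the leading (resp.\ lagging) resolvent, I keep only the $-D_1/\lambda^2$ head of $M^\pm(\lambda)^{-1}$ and collapse the remaining chain using $D_1 = wP_ew$ together with the eigenfunction identity $P_e = -G_0^0 V P_e$ (from $HP_e = 0$), producing exactly $\lambda^{-2}(R_0^+(\lambda^2) - R_0^-(\lambda^2))VP_e$ (resp.\ its mirror).

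Everything not accounted for above is collected into $\mathcal E(\lambda)$.  A direct estimate using Corollary~\ref{cor:R0exp} places $\mathcal E(\lambda)$ in the class $\la x\ra^{1/2}\la y\ra^{1/2}\widetilde O_{\f n2}(\lambda^{n-2+})$, which by Corollary~\ref{cor:fauxIBP2} contributes $\la x\ra\la y\ra O(|t|^{-\f n2})$ to~\eqref{Stone}.  To recover the advertised unweighted $O(|t|^{-\f n2})$ bound I follow the template of Lemmas~\ref{lem:BS1} and~\ref{lem:BS2}: the spatial weight only appears when all $\f n2$ derivatives pile onto a single outer resolvent, and in that case I forgo the last integration by parts and instead invoke the stationary-phase estimate of Lemma~\ref{stat phase}, separating the regimes $\lambda|x-z_1|\ll 1$ and $\lambda|x-z_1|\gtrsim 1$.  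The extra $\lambda^{0+}$ smallness carried by $\mathcal E(\lambda)$ relative to the Born-series remainder of Section~\ref{sec:BS} leaves ample room for this argument to close.  The main obstacle I anticipate is the bookkeeping: after applying~\eqref{alg fact} and the two cancellation identities, one must verify that every residual term genuinely fits the pattern $\lambda^{n-4}\Gamma_1 + \lambda^{n-2}\Gamma_2 + \mathcal E(\lambda)$ without leaking into a $\lambda^{n-6}$ piece. Once the decomposition is in hand, Lemma~\ref{lem:IBP} applied to $\lambda^{n-4}\Gamma_1$ gives $|t|^{1-\f n2}$, the same lemma on $\lambda^{n-2}\Gamma_2$ gives $|t|^{-\f n2}$, and Lemma~\ref{lem:2canc} handles the two $\lambda^{-2}(R_0^+-R_0^-)VP_e$ terms at rate $|t|^{1-\f n2}$, assembling to the claimed $|t|^{1-\f n2} + \la x\ra\la y\ra O(|t|^{-\f n2})$.
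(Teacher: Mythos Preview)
Your approach is essentially the paper's: the same three-way split via~\eqref{alg fact}, the same use of Corollary~\ref{cor:Minv} (equation~\eqref{Minv PV102}) for $M^+(\lambda)^{-1} - M^-(\lambda)^{-1}$, and the same weight-removal device borrowed from Lemmas~\ref{lem:BS1}--\ref{lem:BS2} for $\mathcal E(\lambda)$.  One small correction is needed in your treatment of the inner-resolvent case.  The difference $R_0^+ - R_0^-$ carries no logarithm (see~\eqref{R0diffeps} or Lemma~\ref{lem:Jn est}), so there is no ``$\lambda^{n-4}(\log\lambda)$ cross-term from the $g_1^\pm$ piece''; the leading contribution is simply $\lambda^{n-2}G_{n-2}^c$ paired with $-\lambda^{-2}D_1$, a pure $\lambda^{n-4}$ term, and this term vanishes \emph{entirely} (not merely its log part) because $G_{n-2}^c = c_{n-2}\,1$ is constant and $1\cdot (VG_0^0)^{m-j}vD_1 \propto 1VP_ew = 0$.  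Thus the inner-resolvent case feeds only into $\lambda^{n-2}\Gamma_2$, not into $\Gamma_1$; the $\lambda^{n-4}\Gamma_1$ term comes solely from the $M^+ - M^-$ case.
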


We note that the error term $\mathcal E(\lambda)$
here is distinct from the
error term in Lemma~\ref{lem:nocanc}.  

\begin{proof}

The structure of the argument is the same as in the preceding lemma.  The extra decay
permits us to evaluate more terms of each power series, or better control the remainder.
The fact that $P_eV1 = 0$ causes some of the leading order expressions to vanish.

When the `+/-' cancellation in~\eqref{alg fact} acts on
$M^{\pm}(\lambda)^{-1}$, the first nonzero term has size
$\lambda^{n-4}$.  In detail, we note that
by Corollary~\ref{cor:Minv}, specifically \eqref{Minv PV102} we have
\begin{align*}
	M^+(\lambda)-M^-(\lambda)&=
	\frac{g_2^+(\lambda)-g_2^-(\lambda)}{\lambda^4}
	M_{n-4}^{L2}
	+\frac{g_2^+(\lambda)-g_2^-(\lambda)}{\lambda^2}
	M_{n-2}^{L2}
	+\widetilde{O}_{\f n2}(\lambda^{n-})\\
	&= c_1 \lambda^{n-4}M_{n-4}^{L2}+c_2 \lambda^{n-2}
	M_{n-2}^{L2}+\widetilde O_{\f n2}(\lambda^{n-2+})
\end{align*}
Writing the resolvents as
$R_0^{\pm}(\lambda^2)(x,y)=G_0^0+\lambda^2 G_1^0 +
\la x\ra^{\f 12}\la y \ra^{\f 12}\widetilde O_{\f n2}(\lambda^4)$, as suggested by Corollary~\ref{cor:R0exp},
we can see that
\begin{align*}
	[R_0^-(\lambda^2)V]^m& R_0^-(\lambda^2)v
	[M^+(\lambda)-M^-(\lambda)]v R_0^+(\lambda^2)
	[VR_0^+(\lambda^2)]^m(x,y)\\
	&=c_1\lambda^{n-4}(G_0^0V)^mG_0^0v M_{n-4}^{L2}v
	G_0^0 (VG_0^0)^m +\lambda^{n-2} K_2
	+\la x\ra^{\f 12}\la y \ra^{\f 12}\widetilde 
	O_{\f n2}(\lambda^{n-2+})
\end{align*}
Here $K_2$ is a finite rank operator made out $G_0^0 $'s
and $vM_{n-2}^{L2}v$ along with all the combinations consisting of
$G_0^0$'s, $vM_{n-4}^{L2}v$ and exactly one instance of $G_1^0$.
Lemma~\ref{lem:IBP}  shows that
the first term contributes $|t|^{1-\f n2}$ to 
\eqref{Stone} and the second term contributes $|t|^{-\f n2}$.
Lemma~\ref{lem:fauxIBP2} shows that the last term
generates a map from $L^{1, \f 12}$ to $L^{\infty, -\f 12}$ with norm
$|t|^{-\f n2}$.  The half-power weights only arise if one allows $\f n2$ derivatives
to fall on the first or the last free resolvent in the product.  The argument in Lemma~\ref{lem:BS2} of
using the stationary phase bound of Lemma~\ref{stat phase}
in place of the last integration by parts
shows how that situation can be prevented, so that all the expressions with time decay $|t|^{-\f n2}$
are bounded operators from $L^1$ to $L^\infty$.

Now suppose the `+/-' difference acts on a
free resolvent in the interior of the product.
We may write
\begin{align*}
[R_0^+(\lambda^2)-R_0^-(\lambda^2)](z_j,z_{j+1})&=
	\lambda^{n-2}G_{n-2}^c +\lambda^{n}G_{n}^c +
      |z_j - z_{j+1}|^{2+}\widetilde{O}_{\f n2}(\lambda^{n+}) ,\\
 R_0^\pm(\lambda^2)(z_j,z_{j+1}) &= G_0^0 + \lambda^2G_1^0 
  + \la z_j\ra^{\f 12}\la z_{j+1}\ra^{\f 12}\widetilde{O}_{\f n2}(\lambda^{2+}),\\
 M^\pm(\lambda)^{-1} &= -\lambda^{-2}D_1 + M_0 + \widetilde{O}_{\f n2}(\lambda^{0+}).
\end{align*}
Note that $P_eV1 = 0$ causes the leading term ($\lambda^{n-4}K_3$ in the previous lemma)
to vanish because  $(VG_0^0)^{m-j}D_1 = VP_ew$
and $G_{n-2}^c(z_j, z_{j+1})=c_{n-2}1$ is a constant function.  Thus $G_{n-2}^c(VG_0^0)^{m-j}D_1 = 0$.

Expressions with $\lambda^{n-2}$ occur by replacing the leading term in exactly one of the above
power series by its successor.  That is when $\lambda^n G_n^c$ occurs in place of $\lambda^{n-2}G_{n-2}^c$,
$\lambda^2 G_1^0$ in place of $G_0^0$ or
$M_0$ in place of $-\lambda^2 D_1$.  The operator $G_n^c$ has spatial growth
of $|z_j - z_{j+1}|^2$ but it is controlled by the
decay of the potentials as it is multiplied on both sides by $V(z_j)$ and $V(z_{j+1})$.

Remainders in the class $\widetilde{O}_{\f n2}(\lambda^{n-2+})$
are mostly bounded from $L^1$ to $L^\infty$ as well, except that once again weights of
$\la x\ra^{\f 12}$ or $\la y\ra^{\f 12}$ arise if all $\f n2$ derivatives
fall on the first or the last free resolvent.  Following the calculations in Lemma~\ref{lem:BS2},
one can see that the contribution of these remainder terms to~\eqref{Stone} has time decay
 $|t|^{-\f n2}$ as a map between unweighted $L^1$ and $L^\infty$.

Now suppose the difference of free resolvents occurs at the leading resolvent of the product~\eqref{eq:Minv canc}.
The expression where one approximates all other free resolvents by $G_0^0$, and $M^+(\lambda)^{-1}$
by $-\lambda^{-2}D_1$, is considered separately in Lemma~\ref{lem:2canc}.  Under the assumption
$P_eV1 = 0$, its contribution to~\eqref{Stone} is an operator with kernel bounded by $\la x\ra|t|^{-\f n2}$.
The analogous expression when the +/- difference is applied to the very last resolvent in the product
yields a bound of $\la y\ra|t|^{-\f n2}$.  Put together, these operators form a map from $L^{1,1}$
to $L^{\infty, -1}$ with time decay $|t|^{-\f n2}$.

Finally there is an assortment of remainder terms found by applying~\eqref{alg fact} to
\begin{align*}
\big(R_0^+(\lambda^2) - R_0^-(\lambda^2)\big)\Big[
	(VR_0^+(\lambda^2)V)^m v
	M^+(\lambda)^{-1}&v R_0^+(\lambda^2) (V
	R_0^+(\lambda^2))^m\\
	&-(VG_0^0)^m v
	\Big(-\frac{D_1}{\lambda^2}\Big)v G_0^0
	(V G_0^0)^m\Big].
\end{align*}
Each one is headed by $(R_0^+(\lambda^2) - R_0^-(\lambda^2))$,
concludes with either $R_0^+(\lambda^2)$ or $G_0^0$, and
is of order $\lambda^{n-2}$.  Following the calculations in Lemma~\ref{lem:BS1}
one can show that they contribute $|t|^{-\f n2}$ to~\eqref{Stone}.

\end{proof}

%

Hence we have if $P_eV1=0$
\begin{align*}
	e^{itH}P_{ac}(H)=|t|^{1-\f n2}\Gamma+O(|t|^{-\f n2})
\end{align*}
where $\Gamma$ is a finite rank operator mapping
$L^1$ to $L^\infty$, which we
do not make explicit and the error term is understood
as an operator between weighted spaces.  Combining this
with the analysis for when $P_eV1\neq 0$, we have the
expansion
\begin{align*}
	e^{itH}P_{ac}(H)=c|t|^{2-\f n2}P_eV1VP_e+
	|t|^{1-\f n2}\Gamma_2+O(|t|^{-\f n2}),
\end{align*}
with $\Gamma_2:L^1\to L^\infty$ a finite rank operators,
which is valid whether or not $P_eV1=0$.

\subsection{The case of $P_eV1=0$ and
$P_eVx=0$}

Finally we consider the evolution when we have both
cancellation conditions on the zero-energy eigenfunctions.

\begin{lemma}\label{lem:3canc}

	If $P_eV1= 0$, $P_eVx=0$ and $|V(x)|\les \la x\ra^{-n-8-}$,
	then
	\begin{align*}
		\eqref{eq:Minv canc}&=
		\frac{R_0^+(\lambda^2)-R_0^-(\lambda^2)}
		{\lambda^2} VP_e
		+P_eV\frac{R_0^+(\lambda^2)-R_0^-(\lambda^2)}
		{\lambda^2}+\lambda^{n-2}\Gamma_3+
		\mathcal E(\lambda),
	\end{align*}
	where $\Gamma_3:L^1\to L^\infty$.  The
	error term contributes $O(|t|^{-\f n2})$ as an
	operator from $L^1\to L^\infty$.  Assuming the result of
	Lemma~\ref{lem:2canc}, the total contribution to~\eqref{Stone}
	of all terms is $O(|t|^{-\f n2})$.

\end{lemma}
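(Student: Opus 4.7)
The plan is to follow the template established in Lemmas~\ref{lem:nocanc} and \ref{lem:1canc}, exploiting the stronger cancellation hypotheses $P_eV1=0$ and $P_eVx=0$ to push the expansion one step further. First I would apply the algebraic identity~\eqref{alg fact} to the expression~\eqref{eq:Minv canc}, which partitions the terms according to whether the ``+/-'' difference falls on the central factor $M^\pm(\lambda)^{-1}$, on one of the interior free resolvents, or on the leading/lagging resolvent. Each placement is then analyzed using the appropriate expansions from Section~\ref{sec:resolv}, with error bounds dictated by Lemmas~\ref{lem:IBP}, \ref{lem:fauxIBP} and \ref{lem:fauxIBP2}.

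When the difference acts on $M^\pm(\lambda)^{-1}$, I would invoke the third expansion of Corollary~\ref{cor:Minv}. Under both cancellation conditions all coefficients of $g_1^\pm(\lambda)$ and $g_2^\pm(\lambda)$ drop out, leaving
\begin{equation*}
M^+(\lambda)^{-1} - M^-(\lambda)^{-1} = \frac{g_3^+(\lambda)-g_3^-(\lambda)}{\lambda^4}M_{n-2}^{L3} + \widetilde{O}_{\f n2}(\lambda^{n-2+}) = c\,\lambda^{n-2}M_{n-2}^{L3} + \widetilde{O}_{\f n2}(\lambda^{n-2+})
\end{equation*}
by \eqref{eq:g diff}. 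Sandwiching this between the iterated resolvent expansions and collecting finite-rank pieces gives a leading $\lambda^{n-2}\Gamma_3$ term, which contributes $|t|^{-n/2}$ to~\eqref{Stone} via Lemma~\ref{lem:IBP}. The remainder, which carries spatial weights $\la x\ra^{1/2}\la y\ra^{1/2}$ only when all $\f n2$ derivatives pile onto a single outer resolvent, is controlled without weights by the stationary-phase argument of Lemma~\ref{lem:BS2} (cf. \eqref{a osc int}), yielding $|t|^{-n/2}$ as an $L^1\to L^\infty$ bound.

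The main obstacle will be the case where the difference acts on an interior free resolvent $R_0^\pm(\lambda^2)(z_j,z_{j+1})$. Here I would use the fine expansion
\begin{equation*}
(R_0^+-R_0^-)(\lambda^2)(z_j,z_{j+1}) = c\lambda^{n-2}G_{n-2}^c + c'\lambda^{n}G_{n}^c + c''\lambda^{n+2}G_{n+2}^c + |z_j-z_{j+1}|^{4+}\widetilde{O}_{\f n2}(\lambda^{n+2+})
\end{equation*}
paired with $M^\pm(\lambda)^{-1} = -\lambda^{-2}D_1 + \cdots$ on both sides. The crucial cancellation, already used in Corollary~\ref{Binv cor}, is that $D_1 v G_{n-2}^c = c_{n-2} w P_eV1 = 0$, and moreover $D_1 v G_n^c v D_1$ expands via $|z_j-z_{j+1}|^2 = |z_j|^2 - 2z_j\cdot z_{j+1} + |z_{j+1}|^2$ into a combination of $wP_eVx^2\cdot 1 V P_e w$, $wP_eVx\cdot yVP_ew$, and $wP_eV1\cdot y^2 VP_ew$, all of which vanish under the two orthogonality conditions. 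Consequently, within the interior the $\lambda^{n-2}$ and $\lambda^{n}$ contributions annihilate, and one is left with an honest $\lambda^{n-2}$-size piece (drawn from trading a nearby $-\lambda^{-2}D_1$ for a higher-order term in the $M^\pm$ expansion, or by advancing to $\lambda^{n+2}G_{n+2}^c$) plus a weighted $\widetilde O_{\f n2}(\lambda^{n-2+})$ remainder. The spatial growth of $G_n^c$ and $G_{n+2}^c$ is absorbed by the surrounding potentials since $\beta>n+8$, and the outer-resolvent weight issue is again handled by the stationary-phase device of Lemma~\ref{lem:BS2}.

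For the leading/lagging cases, I would argue exactly as in Lemma~\ref{lem:1canc}: approximating $M^\pm(\lambda)^{-1}$ by $-\lambda^{-2}D_1$ and the inner resolvents by $G_0^0$ produces precisely the stated terms $\lambda^{-2}(R_0^+-R_0^-)VP_e$ and its mirror, whose contribution to~\eqref{Stone} is $|t|^{-n/2}$ under both orthogonality conditions by Lemma~\ref{lem:2canc}. The residual expressions, all of the form $(R_0^+-R_0^-)(\lambda^2)\cdot K(\lambda) \cdot R_0^\pm(\lambda^2)$ with $K(\lambda)$ regular and gaining at least $\lambda^{n-2}$, are handled by the $L^1\to L^\infty$ argument of Lemma~\ref{lem:BS1}. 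Assembling these ingredients yields the claimed decomposition and total $O(|t|^{-n/2})$ bound.
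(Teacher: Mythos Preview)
Your overall strategy matches the paper's: split via \eqref{alg fact} into the $M^\pm(\lambda)^{-1}$ difference, interior resolvent differences, and leading/lagging resolvent differences, then invoke Corollary~\ref{cor:Minv} for the first and Lemma~\ref{lem:2canc} for the last. The treatment of the $M^+(\lambda)^{-1}-M^-(\lambda)^{-1}$ case and the leading/lagging case is correct and essentially identical to the paper.

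The interior resolvent case, however, is over-engineered and contains a flawed step. You invoke the identity $D_1 v G_n^c v D_1 = 0$ from Corollary~\ref{Binv cor}, but that identity requires $G_n^c$ to be sandwiched directly between two copies of $D_1$. When the $+/-$ difference falls on an interior free resolvent there is only \emph{one} factor of $M^\pm(\lambda)^{-1}$ in the product, so the relevant object is $D_1 v (G_0^0V)^{j}G_n^c(\cdots)$, not $D_1vG_n^cvD_1$. Expanding $G_n^c(z_j,z_{j+1})=c_n|z_j-z_{j+1}|^2$ and using $P_eV1=0$, $P_eVx=0$ kills the $|z_{j+1}|^2$ and $z_j\cdot z_{j+1}$ pieces, but $P_eV|z_j|^2$ survives; the claimed annihilation is therefore false. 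Fortunately this does not damage the conclusion: the surviving piece is already of size $\lambda^{n-2}$, hence contributes $|t|^{-n/2}$ and can simply be absorbed into $\Gamma_3$. The paper avoids this detour entirely by observing that the interior case was already controlled at the $|t|^{-n/2}$ level in Lemma~\ref{lem:1canc} using only $P_eV1=0$; the additional hypothesis $P_eVx=0$ is not needed for interior resolvents. You should drop the $G_n^c$ cancellation argument and simply cite the interior analysis from Lemma~\ref{lem:1canc}.
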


Again the error term $\mathcal E(\lambda)$ is distinct
from the previous lemmas.

\begin{proof}

As in the proofs of Lemmas~\ref{lem:nocanc} and
\ref{lem:1canc} we have to consider when the `+/-' 
difference in \eqref{alg fact} acts on either a 
resolvent of $M^\pm (\lambda)^{-1}$.  In the latter case,
the same argument as above goes through, though we
note (from Corollary~\ref{cor:Minv}) that the operator
$M_{n-4}^{L2}=0$, so that
\begin{align*}
	M^+(\lambda)-M^-(\lambda)&=
	\frac{g_3^+(\lambda)-g_3^-(\lambda)}{\lambda^4}
	M_{n-2}^{L3}
	+\widetilde{O}_{\f n2}(\lambda^{n-2+})
	=c_2 \lambda^{n-2}
	M_{n-2}^{L3}+\widetilde O_{\f n2}(\lambda^{n-2+})
\end{align*}
This easily gives us the bound of $|t|^{-\f n2}$ when
combined with the previous sections as an operator
from $L^{1}$ to $L^{\infty}$.

When the `+/-' difference acts on free resolvents, 
we can control the contribution by $|t|^{-\f n2}$ as an
operator from $L^1\to L^\infty$ if the difference acts
on an `inner' resolvent as before.   
For the remaining two terms, when the `+/-' acts on a
leading or lagging free resolvent,
we use the following estimates of Lemma~\ref{lem:2canc}.

\end{proof}


\begin{lemma}\label{lem:2canc}

	The operator
	$$
		\frac{R_0^+(\lambda^2)-R_0^-(\lambda^2)}{\lambda^2}
		VP_e
	$$
	contributes $|t|^{1-\f n2}$ to \eqref{Stone} as an operator from
	$L^1$ to $L^\infty$. 
	If $P_eV1 = 0$, then it contributes
	$|t|^{-\f n2}$ as an operator from $L^1$ to $L^{\infty,-1}$.
	If in addition $P_eVx=0$, then the contribution still has size $|t|^{-\f n2}$,
	but acts as an operator from $L^1$ to $L^\infty$.

\end{lemma}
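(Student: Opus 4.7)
The plan is to work directly with the integral kernel.  Since $P_e$ is a finite-rank projection onto the zero-energy eigenspace $\{\psi_j\}$ (each $\psi_j$ bounded under the decay hypothesis on $V$), the kernel of $\frac{R_0^+-R_0^-}{\lambda^2}VP_e$ has the form $\sum_j \bar\psi_j(y)\,F_j(\lambda,x)$ where
\begin{equation*}
F_j(\lambda,x)=\int_{\R^n}\frac{R_0^+(\lambda^2)(x,z)-R_0^-(\lambda^2)(x,z)}{\lambda^2}\,V(z)\psi_j(z)\,dz.
\end{equation*}
Dropping the index $j$, it suffices to bound $\int_0^\infty e^{it\lambda^2}\lambda\chi(\lambda)F(\lambda,x)\,d\lambda$ with the required $x$-dependence.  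From~\eqref{Jn canc} I may write $\frac{R_0^+-R_0^-}{\lambda^2}(x,z)=c\lambda^{n-4}\phi(\lambda|x-z|)$, where $\phi(r):=r^{1-n/2}J_{n/2-1}(r)$ is smooth and even, bounded on bounded sets, and of order $r^{(1-n)/2}$ with oscillation for $r\gtrsim 1$.

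I split the $z$-integration by inserting $1=\chi(\lambda|x-z|)+\widetilde\chi(\lambda|x-z|)$.  In the low-frequency region $\lambda|x-z|\les 1$, the even parity of $\phi$ gives the Taylor expansion $\phi(r)=\phi(0)+\tfrac12 \phi''(0)r^2+O(r^4)$.  Inserting this together with the identity $|x-z|^2=|x|^2-2x\cdot z+|z|^2$ yields the decomposition
\begin{equation*}
F(\lambda,x)=c_0\lambda^{n-4}\,P_eV1+c_1\lambda^{n-2}\bigl(|x|^2 P_eV1-2x\cdot P_eVz+P_eV|z|^2\bigr)+\mathcal{R}(\lambda,x),
\end{equation*}
where $\mathcal{R}$ collects the tail of the Taylor expansion truncated by $\chi(\lambda|x-z|)$ together with the high-frequency piece carried by the $\widetilde\chi(\lambda|x-z|)$ cutoff.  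Under the decay assumptions on $V$, the techniques of Lemmas~\ref{lem:BS1} and~\ref{lem:BS2}---integration by parts in $\lambda$ followed by the stationary-phase estimate of Lemma~\ref{stat phase}---show that the contribution of $\mathcal{R}$ to~\eqref{Stone} is $O(|t|^{-n/2})$ uniformly in $x$.

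The three claims then follow by applying Lemma~\ref{lem:IBP} to the two explicit terms of $F$.  In case (a) the leading $\lambda^{n-4}P_eV1$ piece is nonzero and constant in $x$, contributing $|t|^{1-n/2}$ to~\eqref{Stone} while the other terms are of order $|t|^{-n/2}$.  In case (b) with $P_eV1=0$, the leading term vanishes and the $\lambda^{n-2}$ coefficient reduces to $-2x\cdot P_eVz+P_eV|z|^2$, which grows at most linearly in $x$ and contributes $|t|^{-n/2}$ as an operator from $L^1$ to $L^{\infty,-1}$.  In case (c) the additional hypothesis $P_eVz=0$ kills the linear-in-$x$ term as well, leaving $P_eV|z|^2$, which is bounded in $x$ and yields an unweighted $L^1\to L^\infty$ bound of size $|t|^{-n/2}$.

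The main obstacle is the uniform-in-$x$ control of $\mathcal{R}$.  The low-frequency tail produces expressions like $\lambda^n|x-z|^4 V\psi$ integrated in $z$, but the support condition $\lambda|x-z|\les 1$ confines $|x-z|\les\lambda^{-1}$ and, combined with the $|t|^{-n/2-1}$ time decay from Lemma~\ref{lem:IBP}, still yields the required $|t|^{-n/2}$ bound.  The high-frequency piece is handled exactly as the leading/lagging resolvent contribution in Lemma~\ref{lem:BS1}: integration by parts $\frac{n}{2}-1$ times in $\lambda$ followed by the stationary-phase estimate of Lemma~\ref{stat phase}.  The additional factor $\lambda^{-2}$ is harmless because the combined smallness $\lambda^{n-4}\phi(\lambda|x-z|)\widetilde\chi(\lambda|x-z|)$ retains more than enough $\lambda$-decay to ensure vanishing boundary terms and absolute convergence of the spatial integrals after the decay of $V$ is used.
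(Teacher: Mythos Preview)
Your decomposition has a genuine gap in the treatment of the remainder $\mathcal R$, specifically its high-frequency part.  You assert that the piece carried by $\widetilde\chi(\lambda|x-z|)$ contributes $O(|t|^{-n/2})$ uniformly in $x$ via the argument of Lemma~\ref{lem:BS1}, but the extra $\lambda^{-2}$ is not harmless there.  In Lemma~\ref{lem:BS1} one integrates by parts $\frac n2-1$ times and then applies Lemma~\ref{stat phase} to a symbol $a(\lambda)$ obeying $|a(\lambda)|\les\lambda^{1/2}|x-z|^{-1/2}$.  With your extra $\lambda^{-2}$ the same computation leaves $|a(\lambda)|\les\lambda^{-3/2}|x-z|^{-1/2}$, and the stationary-phase bound fails.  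At best you recover $|t|^{1-n/2}$ unweighted (this is exactly what the paper obtains for part~(a)), or $|t|^{-n/2}$ only with an $\la x\ra^2$ weight as in Lemma~\ref{lem:L12}.  Neither suffices for parts~(b) and~(c).  A related problem afflicts the low-frequency tail: bounding $\lambda^n|x-z|^4$ by $\lambda^{n-4}$ via the support condition returns you to the $|t|^{1-n/2}$ rate, and keeping the $|x-z|^4$ explicit requires $\la x\ra^4$ weights.

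The paper's proof avoids this entirely by centering the Taylor expansion of $K(\lambda,r)$ at $r=|x|$ rather than at $r=0$.  The point is that the first radial derivative $\partial_r K(\lambda,r)$ carries an honest extra factor of $\lambda^2$ uniformly in $r$ (because $z^{1-n/2}J_{n/2-1}(z)$ is even), so after one subtraction the remainder $K(\lambda,|x-z|)-K(\lambda,|x|)$ is already of order $\lambda^n$ times something controlled by $|z|\la x\ra$, valid for all $\lambda|x-z|$.  This feeds into the tailor-made oscillatory bound of Lemma~\ref{lem:off_center_statphase} with $m=n$, which handles small and large $\lambda r$ simultaneously and yields $|t|^{-n/2}$.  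The second subtraction (under $P_eVx=0$) removes the $\la x\ra$ weight.  Your low/high splitting breaks precisely this uniformity: the cancellation hypotheses are exploited only on the low-frequency side, leaving the high-frequency side with no mechanism to gain the missing $\lambda^2$.
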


Here we need to be careful with the spatial variables
to see that the orthogonality conditions allow us to 
move the dependence on $x$ or $y$ into an inner
spatial variable, which can be controlled by the
decay of the potential.
To make this clear, we note that we wish to
bound the integral
\begin{align}\label{R0Pe}
\int_0^1 e^{it\lambda^2} \chi(\lambda) \lambda^{-1}
(R_0^+(\lambda^2) - R_0^-(\lambda^2))(x,z_1)V(z_1)P_e
(z_1,y) \,d\lambda
\end{align}
in terms of $t,x$ and $y$.

To prove this lemma, we first need to following
oscillatory integral estimate, whose proof is in
Section~\ref{sec:Spec}.

\begin{lemma} \label{lem:off_center_statphase}
Let $m$ be any positive integer.
Suppose $|\Omega^{(k)}(z)| \leq \la z \ra^{\frac{1-m}{2}-k}$ for each $k \geq 0$.
Then
\begin{equation}
\int_0^\infty e^{it\lambda^2} \lambda^{m-1}e^{\pm i\lambda r}\Omega(\lambda r)\chi(\lambda)\,d\lambda
\les |t|^{-\frac{m}{2}}
\end{equation}
with a constant that does not depend on the value of $r > 0$.
\end{lemma}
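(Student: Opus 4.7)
Assume WLOG $t > 0$; the bound is trivial for $t \leq 1$ from absolute integrability (the weight $\lambda^{m-1}\la \lambda r\ra^{(1-m)/2}\chi(\lambda)$ is integrable uniformly in $r$), so take $t \geq 1$. Split the integral at $\lambda = t^{-1/2}$, writing $I = I_{\mathrm{low}} + I_{\mathrm{high}}$. For the low piece,
\begin{equation*}
|I_{\mathrm{low}}| \les \int_0^{t^{-1/2}}\lambda^{m-1}\la \lambda r\ra^{(1-m)/2}\,d\lambda \les t^{-m/2},
\end{equation*}
uniformly in $r$: use $\la \lambda r\ra \sim 1$ directly when $r \leq t^{1/2}$, and when $r > t^{1/2}$ split further at $\lambda = r^{-1}$ and apply $\la \lambda r\ra \sim \lambda r$ on $[r^{-1}, t^{-1/2}]$ to verify the same bound.

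For $I_{\mathrm{high}}$ with the $+$ sign, $\phi'_+(\lambda) = 2t\lambda + r \geq 2t^{1/2}$ is non-vanishing, so integrate by parts via $e^{i\phi_+} = (i\phi'_+)^{-1}\partial_\lambda e^{i\phi_+}$. The boundary at $\lambda_1$ vanishes by $\chi$ and the boundary at $t^{-1/2}$ contributes at most $|a(t^{-1/2})|/|\phi'_+(t^{-1/2})| \les t^{(1-m)/2}/t^{1/2} = t^{-m/2}$, where $a(\lambda) = \lambda^{m-1}\Omega(\lambda r)\chi(\lambda)$. Each subsequent differentiation of the amplitude produces factors of either $\lambda^{-1}$ (from $\lambda^{m-1}$) or $r\Omega^{(k)}(\lambda r)$ (via the chain rule); the hypothesized bound $|\Omega^{(k)}(z)| \les \la z\ra^{(1-m)/2 - k}$ ensures $r^k|\Omega^{(k)}(\lambda r)| \les \lambda^{-k}\la \lambda r\ra^{(1-m)/2}$ when $\lambda r \gtrsim 1$, and $\les r^k$ when $\lambda r \les 1$. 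After a sufficient (bounded) number of iterations, the remaining integrand is absolutely integrable with total bound $\les t^{-m/2}$. The $-$ sign case on the subregion $\{\lambda : |\lambda - \lambda_0| \geq t^{-1/2}\}$, with $\lambda_0 = r/(2t)$ denoting the stationary point, follows by the identical IBP scheme since $|\phi'_-(\lambda)| \gtrsim t|\lambda - \lambda_0| \gtrsim t^{1/2}$ there.

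Near the stationary point, applicable only when $\lambda_0 \in [t^{-1/2}, \lambda_1]$ (i.e., $r \gtrsim t^{1/2}$), apply Lemma~\ref{stat phase} to the rescaled phase $\phi_-/t$ with second derivative equal to $2$ and large parameter $t$. The width-$t^{-1/2}$ stationary integral contributes at most $|a(\lambda_0)|\, t^{-1/2} \les \lambda_0^{(m-1)/2}r^{(1-m)/2}t^{-1/2} \sim t^{-m/2}$, using $\lambda_0 r \sim r^2/t \gtrsim 1$; the $t^{-1}$-weighted tail integrals in Lemma~\ref{stat phase} reduce to the IBP estimates just described. The main obstacle throughout is to verify carefully that repeated differentiation of $\Omega(\lambda r)$ does not introduce logarithmic losses, especially in the range $\lambda r \les 1$ where $r$ is small relative to $\sqrt{t}$; the bounds $|\Omega^{(k)}(z)| \les \la z\ra^{(1-m)/2 - k}$ supply precisely the decay needed so that the chain-rule factors $r^k$ are balanced by compensating inverse powers of the phase derivative, giving the uniform bound $|t|^{-m/2}$ independent of $r > 0$.
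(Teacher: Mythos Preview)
Your plan is correct and follows essentially the same decomposition as the paper's proof: split at $\lambda=t^{-1/2}$, handle the nonstationary $+$ phase and the far-from-stationary $-$ phase by repeated integration by parts against $e^{i(t\lambda^2\pm\lambda r)}$, and estimate the width-$t^{-1/2}$ window around $\lambda_0=r/(2t)$ directly using $|a(\lambda_0)|\les t^{(1-m)/2}$.  The only substantive difference is bookkeeping at the borderline power: the paper performs exactly $\lceil m/2\rceil$ integrations by parts and then notes that the apparent $\int\lambda^{-1}\Omega(\lambda r)\,d\lambda$ term (for $m$ even) does not actually arise, thanks to the cancellation $\frac{d}{d\lambda}\bigl(\frac{\lambda}{\lambda+r/2t}\bigr)=\frac{r/2t}{(\lambda+r/2t)^2}$; your phrase ``a sufficient (bounded) number of iterations'' avoids this by doing one extra integration by parts, which also works.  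One small point: invoking Lemma~\ref{stat phase} for the near-stationary piece is slightly misleading, since that lemma only supplies a single $t^{-1}$ and its tail bounds are weaker than the multi-IBP you actually need on $\{|\lambda-\lambda_0|\geq t^{-1/2}\}$---you are really just using the trivial estimate on the width-$t^{-1/2}$ window and your own IBP scheme elsewhere, which is exactly what the paper does (organized instead by the dichotomy $r\lessgtr 4\sqrt{t}$).
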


We note that $m$ in this lemma is an arbitrary integer, not that value chosen in \eqref{eq:bstail} that ensures the iterated resolvents are locally $L^2$.

\begin{proof}[Proof of Lemma~\ref{lem:2canc}]

%
%
According to~\eqref{Jn canc},
 the integral kernel of $R_0^+(\lambda^2) - R_0^-(\lambda^2)$
can be expressed (modulo constants) as
\begin{multline*}
K(\lambda, |x-z_1|) = \lambda^{n-2} \frac{J_{\frac{n}{2}-1}(\lambda|x-z_1|)}
{(\lambda|x-z_1|)^{\frac{n}{2}-1}}\\
= \lambda^{n-2} \big(e^{i\lambda|x-z_1|}\Omega_+(\lambda|x-z_1|) 
+ e^{-i\lambda|x-z_1|}\Omega_-(\lambda|x-z_1|)\big),
\end{multline*}
where the functions $\Omega_\pm$ and their derivatives satisfy
$|\Omega^{(k)}_\pm(z)| \les \la z \ra^{\frac{1-n}{2} - k}$.
Derivatives with respect to the spatial variable $r = |x-z_1|$ are obtained by differentiating \eqref{Jn low} and
\eqref{Jn asymp} according to whether $\lambda r$ is small or large.  
Since the expansion of $z^{1-\frac{n}{2}}J_{\frac{n}{2}-1}(z)$ in~\eqref{Jn low}
has only even powers of $z$, its first derivative is bounded by $|z|$ rather than a constant.  Thus we can write
\begin{equation} \label{eq:Omega1-2}
\begin{aligned}
\partial_rK(\lambda, r) &= \lambda^{n}r\big(e^{i\lambda r}\Omega_{1,+}(\lambda r)
+ e^{-i\lambda r} \Omega_{1,-}(\lambda r)\big) \\
\partial^2_rK(\lambda, r) &= \lambda^{n}\big(e^{i\lambda r}\Omega_{2,+}(\lambda r)
+ e^{-i\lambda r} \Omega_{2,-}(\lambda r)\big)
\end{aligned}
\end{equation}
where $|\Omega_{j,\pm}^{(k)}(z)| \les \la z \ra^{\frac{1-n}{2}-j-k}$ for $j = 1, 2$
and all $k \geq 0$.  

Roughly speaking, the bound on
$\partial_r K(\lambda,r)$ gains two powers of $\lambda$
at the cost of one power of $r=|x-z_1|$.  This gains us an extra
power of time decay in the contribution to the Stone
formula, \eqref{Stone}, at the cost of one power spatial
weight.   The bound on $\partial_r^2 K(\lambda, r)$ 
allows us to gain the desired time decay with no spatial weights.

As an immediate consequence we can apply Lemma~\ref{lem:off_center_statphase} with $m = n-2$
to obtain
\begin{equation*}
\int_0^\infty e^{it\lambda^2} \lambda^{n-3} e^{\pm i \lambda|x-z_1|} \Omega_\pm(\lambda|x-z_1|)
\chi(\lambda)\,d\lambda \les |t|^{1-\frac{n}{2}}
\end{equation*}
and therefore $\int_0^\infty e^{it\lambda^2}\lambda^{-1}\chi(\lambda)
(R_0^+(\lambda^2) - R_0^-(\lambda^2))VP_e\,d\lambda$ maps $L^1$ to $L^\infty$ with norm
decay of $|t|^{1-\frac{n}{2}}$.

When $P_eV1=0$,
we can extract a leading-order term by replacing $K(\lambda,|x-z_1|)$ by
$K(\lambda,|x-z_1|)-K(\lambda,|x|)$ each place that it occurs.  From an operator perspective this
amounts to approximating $R_0^+(\lambda^2) - R_0^-(\lambda^2)$ by
$K(\lambda|x|)1$.  This term vanishes from the Schr\"odinger evolution precisely
when $P_eV1 = 0$.

The remainder can be written using the expression
\begin{equation*}
K(\lambda, |x-z_1|) - K(\lambda, |x|) = \int_0^1 \partial_r K(\lambda, |x-sz_1|)  \frac{(-z_1) \cdot (x-sz_1)}{|x-sz_1|}\, ds.
\end{equation*}
Based on the decomposition in~\eqref{eq:Omega1-2} and Lemma~\ref{lem:off_center_statphase} with $m = n$, we have the bound
\begin{equation*}
\Big|\int_0^\infty e^{it\lambda^2} \lambda^{-1} \chi(\lambda)
\partial_r K(\lambda, |x-sz_1|) \frac{(-z_1) \cdot (x-sz_1)}{|x-sz_1|}\, d\lambda \Big|
\les |t|^{-\frac{n}{2}} |x-sz_1| |z_1|
\end{equation*}
for each $s$.  If $s \in [0,1]$ we also have $|x-sz_1| \leq |x| + |z_1| \leq \la x\ra \la z_1 \ra$.
It follows that $\int_0^\infty e^{it\lambda^2} \lambda^{-1} \chi(\lambda) 
(R_0^+(\lambda^2) - R_0^-(\lambda^2) - K(\lambda,|x|)1)VP_e\,d\lambda$ maps $L^1$ to $L^{\infty,-1}$
provided $V$ has enough decay so that the range of $VP_e$ belongs to $L^{1,2}$, which follows from the fact that
$P_e:L^1\to L^\infty$, see Corollary~\ref{Pemapping}, and the decay of $V$.

Now if in addition $PVx = 0$ we can gain more by going to the second order expression
\begin{multline*}
K(\lambda, |x-z_1|) - K(\lambda, |x|) + \partial_rK(\lambda,|x|) \frac{z_1 \cdot x}{|x|}  \\
= \int_0^1 (1-s) \bigg[\partial_r^2 K(\lambda, |x-sz_1|)  \frac{(z_1 \cdot (x-sz_1))^2}{|x-sz_1|^2}\\
+ \partial_r K(\lambda, |x-sz_1|)\Big(\frac{|z_1|^2}{|x-sz_1|} - \frac{(z_1 \cdot (x-sz_1))^2}{|x-sz_1|^3}\Big)\bigg]\, ds.
\end{multline*}
Thanks to the bounds in~\eqref{eq:Omega1-2} and Lemma~\ref{lem:off_center_statphase} with $m=n$,
there is a uniform estimate
\begin{equation*}
\Big|\int_0^\infty e^{it\lambda^2}\lambda^{-1}\chi(\lambda) \partial_r^2K(\lambda, |x-sz_1|)
\frac{(z_1 \cdot (x-sz_1))^2}{|x-sz_1|^2}\, d\lambda\Big| \les 
|t|^{-\f n2} \la z_1\ra^2,
\end{equation*}
and similarly for each of the terms with $\partial_r K (\lambda, |x-sz_1|)$ using \eqref{eq:Omega1-2} repeatedly.  Plugging this back 
into the original operator integral yields
\begin{multline*}
\bigg\Vert
\int_0^\infty e^{it\lambda^2} \lambda^{-1}\chi(\lambda)
\big(R_0^+(\lambda^2) - R_0^-(\lambda^2) - K(\lambda, |x|)1 + \partial_rK(\lambda,|x|){\textstyle \frac{x}{|x|}}\cdot z_1\big)VP_e\, d\lambda
\bigg\Vert_{L^1\to L^\infty}\\ \les |t|^{-\frac{n}{2}},
\end{multline*}
provided $VP_e$ has range in $L^{1, 2}$, which is ensured
by Corollary~\ref{Pemapping} and the decay of $V$.

\end{proof}

\begin{corollary}\label{cor:2canc}

	The operator
	$$
		P_eV
		\frac{R_0^+(\lambda^2)-R_0^-(\lambda^2)}{\lambda^2}
	$$
	contributes $|t|^{1-\f n2}$ to \eqref{Stone} as an operator from $L^1$ to $L^\infty$.  If $P_eV1=0$, it
	contributes $|t|^{-\f n2}$ to \eqref{Stone} as an 
	operator from $L^{1,1}$ to $L^{\infty}$.  If in
	addition $P_eVx=0$ the contribution is as an operator
	from $L^1$ to $L^\infty$.

\end{corollary}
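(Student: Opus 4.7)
The operator's contribution to~\eqref{Stone} has integral kernel
\begin{equation*}
\int_{\R^n} P_e(x,z) V(z) \biggl[\int_0^\infty e^{it\lambda^2} \chi(\lambda) \lambda^{-1} K(\lambda, |z-y|)\, d\lambda\biggr] dz,
\end{equation*}
where $K(\lambda,r)$ is the representation of $R_0^+(\lambda^2)-R_0^-(\lambda^2)$ introduced in the proof of Lemma~\ref{lem:2canc}.  This is, up to transposition, the mirror image of the object studied there with the roles of the output variable $x$ and the input variable $y$ interchanged; the plan is to run precisely the same three expansions of $K$, now performed in the $z$-variable about $z=0$ relative to $y$.

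For the first (unconditional) assertion, Lemma~\ref{lem:off_center_statphase} with $m=n-2$ bounds the inner $\lambda$-integral by $|t|^{1-\f n2}$ uniformly in $z$ and $y$.  Integration in $z$ against $P_e(x,z)V(z)$ then yields the desired kernel bound uniformly in $x$, using the decay of $V$ and the absolute boundedness of $P_e$ from Corollary~\ref{Pemapping} (which ensures $\sup_x \int |P_e(x,z) V(z)|\, dz \les 1$).

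For the second assertion, write
\begin{equation*}
K(\lambda,|z-y|) = K(\lambda,|y|) + \int_0^1 \partial_r K(\lambda, |y-sz|) \frac{(-z)\cdot (y-sz)}{|y-sz|}\, ds.
\end{equation*}
The subtracted term contributes $\lambda^{-2}K(\lambda,|y|)(P_eV1)(x)$, which vanishes by assumption.  Inserting the representation of $\partial_r K$ from~\eqref{eq:Omega1-2} and applying Lemma~\ref{lem:off_center_statphase} with $m=n$ produces a pointwise bound of $|t|^{-\f n2}|y-sz||z| \les |t|^{-\f n2}\la y\ra\la z\ra$ on the inner $\lambda$-integral.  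The factor $\la z\ra$ is absorbed by the extra decay of $V$, leaving a kernel bound of $|t|^{-\f n2}\la y\ra$, which is precisely the estimate from $L^{1,1}$ to $L^\infty$.  For the third assertion, a second-order Taylor expansion of $K(\lambda,|z-y|)$ in $z$ simultaneously removes both the $(P_eV1)(x)$ and the $(P_eVx)(x)$ contributions; the quadratic remainder carries a factor $|z|^2$ absorbed into the decay of $V$ (requiring the range of $VP_e$ to lie in $L^{1,2}$, provided by Corollary~\ref{Pemapping} together with the hypothesized decay on $V$), and a further application of~\eqref{eq:Omega1-2} and Lemma~\ref{lem:off_center_statphase} at $m=n$ delivers a uniform $|t|^{-\f n2}$ bound as an operator from $L^1$ to $L^\infty$.

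The only subtlety relative to Lemma~\ref{lem:2canc}, and the main conceptual step, is that expanding $K(\lambda,\cdot)$ about $y$ rather than $x$ redirects all leftover polynomial weights onto $\la y\ra$ (the input variable) rather than $\la x\ra$ (the output variable).  This is why the $P_eV1=0$ cancellation produces a mapping from $L^{1,1}$ to $L^\infty$ here, dual to the $L^1\to L^{\infty,-1}$ statement in Lemma~\ref{lem:2canc}.
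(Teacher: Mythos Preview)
Your proof is correct and follows precisely the approach the paper intends: the corollary is simply the transpose of Lemma~\ref{lem:2canc}, obtained by interchanging the roles of the input and output variables, and you have carried this out in detail. One minor arithmetic slip: in the second assertion you write $|y-sz|\,|z| \les \la y\ra\la z\ra$, but in fact only $|y-sz|\,|z| \les \la y\ra\la z\ra^2$ holds (take $y=0$, $|z|$ large); the conclusion is unaffected since the assumed decay on $V$ absorbs $\la z\ra^2$, and indeed this is consistent with your later remark that the range of $VP_e$ must lie in $L^{1,2}$.
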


We are now ready to prove Theorem~\ref{thm:main}.

\begin{proof}[Proof of Theorem~\ref{thm:main}]

	We note that the Theorem is proven by bounding the
	oscillatory integral in the Stone formula
	\eqref{Stone},
\begin{align}
	\bigg|
	\int_0^\infty e^{it\lambda^2} \lambda \chi(\lambda)
	[R_V^+(\lambda^2)-R_V^-(\lambda^2)](x,y)\, 
	d\lambda\bigg| \les_{x,y} |t|^{-\alpha}
\end{align}	

We begin by proving Part~(\ref{thmpart1}), where there is
no $x,y$ dependence.  The proof follows by 
expanding $R_V^\pm(\lambda^2)$ into the Born series
expansion, \eqref{eq:finitebs} and \eqref{eq:bstail}.
The contribution of \eqref{eq:finitebs} is bounded by
$|t|^{-\f n2}$ by Proposition~\ref{bsprop}, while
the contribution of \eqref{eq:bstail} is bounded
by $|t|^{2-\f n2}P_eV1VP_e+O(|t|^{1-\f n2})$ by
Lemma~\ref{lem:nocanc}.

To prove Part~(\ref{thmpart2}), one uses 
Lemma~\ref{lem:1canc} in the place of Lemma~\ref{lem:nocanc}
in the proof of Part~(\ref{thmpart1}).  Finally,
Part~(\ref{thmpart3}) is proven by using
Lemma~\ref{lem:3canc}.

\end{proof}

We note that the proof of Theorem~\ref{thm:reg} is
actually simpler.  If zero is regular, the expansion 
of $M^{\pm}(\lambda)^{-1}$ is of the same form
with respect to the spectral variable $\lambda$ as
$(M^{\pm}(\lambda)+S_1)^{-1}$ given in 
Lemma~\ref{M+S inverse} with different operators that 
are still absolutely bounded and real-valued, see
Remark~\ref{rmk:reg}.
The dispersive bounds follow as in the analysis when
zero is not regular without the most singular terms that
arise from $-D_1/\lambda^2$.

We note that we need one further estimate on the
operator
$$
	\frac{R_0^+(\lambda^2)-R_0^-(\lambda^2)}{\lambda^2}
	VP_e
$$
that is not contained in Lemma~\ref{lem:2canc} to prove
the Corollary~\ref{cor:ugly} in the case that $P_eV1\neq 0$.  To establish that the operator with the $|t|^{1-\f n2}$ decay rate is indeed finite rank, and to see
why the operator $A_0(t)$ must map $L^{1,2}$ to 
$L^{\infty, -2}$ if $P_eV1\neq 0$, we need the following lemma 

\begin{lemma}\label{lem:L12}

	The operator
	$$
		\frac{R_0^+(\lambda^2)-R_0^-(\lambda^2)}{\lambda^2}
		VP_e
	$$
	contributes $c|t|^{1-\f n2}1VP_e+ O(|t|^{-\f n2})$ to \eqref{Stone}, where the error term is an operator from $L^1$ to $L^{\infty,-2}$. 

\end{lemma}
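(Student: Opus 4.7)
The plan is to isolate the $x$-independent leading order of the kernel $K(\lambda,|x-z_1|) := (R_0^+ - R_0^-)(\lambda^2)(x,z_1)$ at $z_1 = x$ and treat the remainder by the same second-order Taylor expansion used in Lemma~\ref{lem:2canc}. From~\eqref{Jn canc} and the power series~\eqref{Jn low}, $r \mapsto K(\lambda,r)$ is an even function with $K(\lambda,0) = b_{n-2}\lambda^{n-2}$ and $\partial_r K(\lambda,0) = 0$; the vanishing of the first derivative is already visible in the explicit factor of $r$ on the first line of~\eqref{eq:Omega1-2}.

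Based on this, I would split
$$
K(\lambda,|x-z_1|) = b_{n-2}\lambda^{n-2} + |x-z_1|^2\int_0^1 (1-t)\,\partial_r^2K(\lambda, t|x-z_1|)\,dt
$$
and decompose the contribution of $\lambda^{-2}(R_0^+ - R_0^-)VP_e$ to~\eqref{Stone} accordingly. The first piece contributes
$$
b_{n-2}\Big(\int_0^\infty e^{it\lambda^2}\chi(\lambda)\lambda^{n-3}\,d\lambda\Big)\,1\cdot VP_e = c|t|^{1-\f n2}\,1\cdot VP_e
$$
by Lemma~\ref{lem:IBP}, which is exactly the advertised leading term (a rank-one operator whose kernel is constant in $x$).

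For the second piece, the second identity in~\eqref{eq:Omega1-2} expresses $\partial_r^2 K(\lambda,r)$ as $\lambda^n$ times oscillating amplitudes satisfying $|\Omega_{2,\pm}^{(k)}(z)|\les\la z\ra^{\frac{1-n}{2}-2-k}$, which more than meets the hypotheses of Lemma~\ref{lem:off_center_statphase} with $m = n$. That lemma yields
$$
\sup_{r'\geq 0}\Big|\int_0^\infty e^{it\lambda^2}\chi(\lambda)\lambda^{-1}\,\partial_r^2 K(\lambda,r')\,d\lambda\Big|\les |t|^{-\f n2},
$$
so the remainder kernel is bounded by $|t|^{-\f n2}|x-z_1|^2|V(z_1)||P_e(z_1,y)|$. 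Since $|x-z_1|^2\les\la x\ra^2\la z_1\ra^2$, and since $P_e:L^1\to L^\infty$ by Corollary~\ref{Pemapping} while the hypothesis $|V(x)|\les\la x\ra^{-n-8-}$ of Corollary~\ref{cor:ugly} guarantees $\la\cdot\ra^2 V\in L^1$, the $z_1$-integral is finite uniformly in $y$, giving the claimed $L^1\to L^{\infty,-2}$ bound with decay $|t|^{-\f n2}$.

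The main obstacle, as in Lemma~\ref{lem:2canc}, is ensuring that the $r$-second-derivative of the leading resolvent difference truly gains two powers of $\lambda$ without costing any extra powers of $r$; this is precisely what is recorded in~\eqref{eq:Omega1-2} and flows from differentiating~\eqref{Jn low} and~\eqref{Jn asymp} in the small- and large-$\lambda r$ regimes. Once that symbol bound is secured, the rest is a direct invocation of Lemma~\ref{lem:off_center_statphase} together with the mapping properties of $VP_e$.
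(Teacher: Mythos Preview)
Your proof is correct and somewhat cleaner than the paper's own argument. The paper proves Lemma~\ref{lem:L12} by splitting into the regimes $\lambda|x-z_1|\ll 1$ and $\lambda|x-z_1|\gtrsim 1$: on the small regime it uses the explicit expansion $[R_0^+-R_0^-]=\lambda^{n-2}G_{n-2}^c+\lambda^n G_n^c+\widetilde O(\lambda^{n-2}(\lambda|x-z_1|)^{2+\epsilon})$ together with Lemma~\ref{lem:IBP} and the argument controlling~\eqref{eq:Bornlowbad}; on the large regime it integrates by parts $\frac{n}{2}-1$ times and applies the stationary-phase bound of Lemma~\ref{stat phase} in the style of Lemma~\ref{lem:BS1}. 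Your approach instead Taylor-expands $K(\lambda,\cdot)$ to second order about $r=0$ (rather than about $r=|x|$ as in Lemma~\ref{lem:2canc}), which immediately isolates the same rank-one leading term $b_{n-2}\lambda^{n-2}\cdot 1VP_e$, and then handles the entire remainder uniformly in $r$ via Lemma~\ref{lem:off_center_statphase} with $m=n$ and the $\partial_r^2K$ representation from~\eqref{eq:Omega1-2}. This avoids the regime split and the separate stationary-phase computation, at the price of relying on the machinery already built for Lemma~\ref{lem:2canc}. One minor remark: the decay $|\Omega_{2,\pm}^{(k)}(z)|\les\la z\ra^{\frac{1-n}{2}-2-k}$ quoted from the paper is stronger than what actually holds for large $z$ (the correct order is $\la z\ra^{\frac{1-n}{2}-k}$), but that weaker bound is precisely the hypothesis of Lemma~\ref{lem:off_center_statphase} with $m=n$, so your argument goes through unchanged.
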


\begin{proof}

The desired bound 
follows using  \eqref{Jn canc} as in Lemma~\ref{lem:BS1}.  
We first concern ourselves with when 
$\lambda|x-z_1|\ll 1$,
in this case we note that
using \eqref{R0diffeps} out to one further term, we have
\begin{align}
		[R_0^+(\lambda^2)-R_0^-(\lambda^2)](x,z_1)&=\lambda^{n-2} G_{n-2}^c+\lambda^n G_n^c
		+\widetilde O(\lambda^{n-2} (\lambda|x-z_1|)^{2+\epsilon}),
		\, 0\leq \epsilon <2.
\end{align}
Recalling that $G_n^c(x,z_1)=c_n |x-z_1|^2$,
we can now write (for $\lambda|x-z_1|\ll 1$)
\begin{multline*}
	\frac{R_0^+-R_0^-(\lambda^2)(x,z_1)}{\lambda^2} 
	V(z_1) P_e(z_1,y)
	= c_{n-2}\lambda^{n-4}V(z_1) P_e(z_1,y)
	+\lambda^{n-2}  |x-z_1|^2 V(z_1) P_e(z_1,y)\\
	+ \widetilde O(\lambda^{n-4} (\lambda|x-z_1|)^{2+\epsilon})V(z_1) P_e(z_1,y).
\end{multline*}
The first $\lambda^{n-4}$ term can be seen to contribute
$c|t|^{1-\f n2}$ to \eqref{Stone} by Lemma~\ref{lem:IBP}.
Similarly the second term with $\lambda^{n-2}$ is seen
to contribute $\la x\ra^2 |t|^{-\f n2}$ to \eqref{Stone}
by Lemma~\ref{lem:IBP}.
The final error term is controlled
identically to how one bounds \eqref{eq:Bornlowbad}
in Lemma~\ref{lem:BS1} (with an additional factor of
$|x-z_1|^2$), from which one again has a
contribution of size 
$\la x\ra ^2|t|^{-\f n2}$ to \eqref{Stone}.

On the other hand, if $\lambda|x-z_1|\gtrsim 1$, we
can write
$$
	[R_0^+-R_0^-](\lambda^2)(x,z_1)
	=e^{i\lambda |x-z_1|}\widetilde O(
	\lambda^{n-2}(\lambda|x-z_1|^{\f 12+\alpha}))+
	e^{-i\lambda |x-z_1|}\widetilde O(
	\lambda^{n-2}(\lambda|x-z_1|^{\f 12+\alpha})).
$$
As usual, the most delicate term is the `-' phase.
We need to control the contribution of
\begin{align*}
	\int_0^\infty e^{it\lambda^2}\lambda^{-1} \chi(\lambda) 	e^{-i\lambda |x-z_1|}\widetilde O(
	\lambda^{n-2}(\lambda|x-z_1|)^{\f 12+\alpha})\, d\lambda.
\end{align*}
Upon integrating by parts $\f n2-1$ times against
the imaginary Gaussian, we are
left to bound an integral of the form
\begin{align*}
	|t|^{1-\f n2}\int_0^\infty 
	e^{it\lambda^2-i\lambda|x-z_1|} a(\lambda) \, d\lambda
\end{align*}
where 
\begin{align*}
	|a(\lambda)| \les \lambda^{-1} (\lambda|x-z_1|)^{\f 12+\alpha}\les \lambda^{\f 12} |x-z_1|^{\f 32}
	\les |x-z_1|^2 \bigg(\frac{ \lambda^{\f 12}}
	{|x-z_1|^{\f 12}}
	\bigg)
\end{align*}
where we took $\alpha=1$ in the second to last line.
Similarly,
\begin{align*}
	|a'(\lambda)|\les |x-z_1|^2 \bigg(\frac{1}{\lambda^{\f 12} |x-z_1|^{\f 12}}	\bigg).
\end{align*}
Now, one can employ Lemma~\ref{stat phase} as in the
proof of Lemma~\ref{lem:BS1} (with an extra factor of
$|x-z_1|^2$) to see that this term contributes at
most $\la x\ra^2 |t|^{-\f n2}$ to \eqref{Stone}.
The `+' phase again follows more simply from another 
integration by parts, this time against
$e^{it\lambda^2+i\lambda |x-z_1|}$.

\end{proof}

\begin{corollary}\label{cor:L12}

	The operator
	$$
		P_e V\frac{R_0^+(\lambda^2)-R_0^-(\lambda^2)}
		{\lambda^2}
	$$
	contributes $c|t|^{1-\f n2}P_eV1+ O(|t|^{-\f n2})$ to \eqref{Stone}, where the error term is an operator from $L^{1,2}$ to $L^{\infty}$. 

\end{corollary}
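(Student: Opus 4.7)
The plan is to derive Corollary~\ref{cor:L12} directly from Lemma~\ref{lem:L12} by exploiting the symmetry of all the kernels involved.  The free resolvent kernel $R_0^\pm(\lambda^2)(x,z)$ depends only on $|x-z|$ and is therefore invariant under transposition of its two arguments.  Multiplication by the real-valued potential $V$ is self-adjoint, and the spectral projection satisfies $P_e(x,z) = P_e(z,x)$ since $P_e$ is an orthogonal projection onto a real subspace.  Consequently, the integral kernel of $P_eV[R_0^+(\lambda^2)-R_0^-(\lambda^2)]/\lambda^2$ at the point $(x,y)$ equals the integral kernel of $[R_0^+(\lambda^2)-R_0^-(\lambda^2)]VP_e/\lambda^2$ at the transposed point $(y,x)$.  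This identity is preserved after integrating against $e^{it\lambda^2}\lambda\chi(\lambda)$ inside the Stone formula~\eqref{Stone}, so the proof of the corollary reduces to applying Lemma~\ref{lem:L12} and swapping $x \leftrightarrow y$ at the kernel level.

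By Lemma~\ref{lem:L12}, the contribution of $[R_0^+(\lambda^2)-R_0^-(\lambda^2)]VP_e/\lambda^2$ to \eqref{Stone} is $c|t|^{1-n/2}\kappa_1(x,y) + R(t;x,y)$, where $\kappa_1(x,y) = \int V(z)P_e(z,y)\,dz$ is the kernel of the rank-one operator $1VP_e$, and the remainder obeys $|R(t;x,y)| \les \la x\ra^2 |t|^{-n/2}$ uniformly in $y$.  This pointwise bound is the kernel-level reformulation of the statement that $R$ maps $L^1 \to L^{\infty,-2}$ with norm $|t|^{-n/2}$.  After the $x \leftrightarrow y$ swap, the leading term becomes $c|t|^{1-n/2}\kappa_1(y,x) = c|t|^{1-n/2}\int P_e(x,z)V(z)\,dz$, which is precisely the kernel of $P_eV1$.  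The remainder becomes $R(t;y,x)$, bounded by $C\la y\ra^2 |t|^{-n/2}$ uniformly in $x$, which is exactly the condition for an operator $L^{1,2}\to L^\infty$ with norm $|t|^{-n/2}$.

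There is no substantive obstacle; the argument is bookkeeping.  The only point deserving care is verifying that the kernels of $1VP_e$ and $P_eV1$ transpose into one another under $(x,y)\mapsto(y,x)$, which follows at once from $P_e^\ast = P_e$ together with the symmetry of multiplication by $V$.  As an alternative to the duality argument, one could mirror the proof of Lemma~\ref{lem:L12} line by line, now expanding $[R_0^+(\lambda^2)-R_0^-(\lambda^2)](z_1,y)/\lambda^2$ in the variable $|z_1-y|$ rather than $|x-z_1|$; the small-$\lambda$ expansion in \eqref{R0diffeps} produces the leading term $c_{n-2}\lambda^{n-4}P_eV\mathbbm 1 + c_n\lambda^{n-2}|z_1-y|^2 P_eV + \widetilde O(\lambda^{n-4}(\lambda|z_1-y|)^{2+\epsilon})P_eV$, and the same stationary phase computation in the regime $\lambda|z_1-y|\gtrsim 1$ with $|z_1-y|^2\les \la y\ra^2\la z_1\ra^2$ gives the same $\la y\ra^2 |t|^{-n/2}$ remainder, with the surplus $\la z_1\ra^2$ absorbed by the decay of $V$ through $P_e:L^1\to L^\infty$ (Corollary~\ref{Pemapping}).
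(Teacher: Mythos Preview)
Your proof is correct and follows the same approach as the paper, which simply states that the corollary is identical in form to Lemma~\ref{lem:L12} with the spatial variables $x$ and $y$ trading places. Your transposition argument is a clean formalization of this symmetry, and your alternative line-by-line mirroring is exactly what the paper has in mind.
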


The proof of the corollary is identical in form to the
proof of Lemma~\ref{lem:L12} with the spatial variables
$x$ and $y$ trading places.


\section{Spectral characterization and integral estimates}\label{sec:Spec}

We provide a characterization of the spectral subspaces of
$L^2(\R^n)$ that are related to the invertibility of
certain operators in our expansions.  This characterization
and its proofs are identical to those given in \cite{GGodd}, as such 
we provide the statements and omit the proofs.  As in the
odd case, the lack of resonances in dimensions $n>4$
simplifies these characterizations.
In addition, we state several oscillatory 
integral estimates from
\cite{GGodd} and provide proofs for new integral estimates
that are required in this paper.

\begin{lemma}\label{S characterization}

	Assume that $|V(x)|\les \la x\ra^{-2\beta}$ for some $\beta\geq 2$,
	$f\in S_1L^2(\R^n)\setminus\{0\}$ for
	$n\geq 5$ iff $f=wg$ for $g\in L^2\setminus\{0\}$ such that
	$-\Delta g+Vg=0$ in $\mathcal S'$.

\end{lemma}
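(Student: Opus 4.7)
The plan is to prove both implications by explicit construction, using the key identities $V = wv = vw$, $wU = v$, and $G_0^0 = (-\Delta)^{-1}$ with kernel $c_n|x-y|^{2-n}$. The passage between $f \in S_1 L^2$ and zero-energy eigenfunctions $g$ is via the natural map $g = -G_0^0 v f$, with inverse $f = wg$.

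For the easy implication ($\Leftarrow$), suppose $g \in L^2\setminus\{0\}$ satisfies $-\Delta g + Vg = 0$ distributionally, and set $f = wg$. Applying $G_0^0$ to the equation $-\Delta g = -Vg = -wvg = -wf$ gives $g = -G_0^0 v f$, provided we can justify inverting $-\Delta$ on $Vg$; this requires showing $Vg$ has enough decay, which follows from $|V(x)|\lesssim \la x\ra^{-2\beta}$ with $\beta \geq 2$ and $g \in L^2$ (so $Vg \in L^1 \cap L^{2}$ after Cauchy-Schwarz). Then I would compute $(U + vG_0^0 v)f = Uwg + vG_0^0 vf = vg - vg = 0$, using $Uw = v$. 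Also $f \neq 0$ since $vg \neq 0$ (if $vg = 0$ then $Vg = 0$, so $-\Delta g = 0$ in $\mathcal S'$, forcing $g=0$ by $L^2$ Liouville).

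For the harder direction ($\Rightarrow$), suppose $f \in S_1 L^2\setminus\{0\}$, so $(U + vG_0^0 v)f = 0$, and define $g := -G_0^0 vf$. There are three things to verify in sequence. First, $f = wg$: multiply the identity $(U + vG_0^0 v)f = 0$ by $U$, to get $f = -Uv G_0^0 v f = -w G_0^0 vf = wg$. Second, $-\Delta g + Vg = 0$: apply $-\Delta$ to $g = -G_0^0 vf$ to obtain $-\Delta g = -vf = -vwg = -Vg$ (using $f = wg$ from the previous step). Third, $g \in L^2$; this is the main technical obstacle.

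For the $L^2$-membership of $g$, the strategy is to split $g(x) = c_n\int_{|x-y| \leq 1} |x-y|^{2-n}v(y)f(y)\,dy + c_n\int_{|x-y| > 1}|x-y|^{2-n}v(y)f(y)\,dy$. Since $v \in L^\infty \cap L^2$ (using $\beta\geq 2$), Young's or Hardy-Littlewood-Sobolev type inequalities on the local piece and direct pointwise bounds from the decay of $v$ on the far piece establish $g \in L^2(\R^n)$ when $n \geq 5$, where $|x-y|^{2-n}$ decays fast enough at infinity to avoid a resonance obstruction (this is precisely where the restriction $n \geq 5$ enters, matching the absence of zero-energy resonances in these dimensions). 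Finally, $g\neq 0$ follows because $f = wg$ and $f \neq 0$. The remaining mild subtlety is the possible presence of components of $f$ supported where $V = 0$: these lie trivially in $\ker(U + vG_0^0v)$ but play no dynamical role, and are removed by the standard convention that $S_1$ is taken modulo $\ker v$ (equivalently, restricted to the closure of the range of $v$), which is implicit in the odd-dimensional companion paper \cite{GGodd} and should be invoked here in the same way.
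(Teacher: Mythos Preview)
Your approach is the standard one and matches what the paper defers to in \cite{GGodd}: pass between $f\in S_1L^2$ and eigenfunctions via $g=-G_0^0vf$, $f=wg$, and verify the equation, the $L^2$ membership, and nontriviality on each side.

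Two small corrections. First, the claim that $v\in L^2$ ``using $\beta\ge 2$'' is wrong under the stated hypotheses: $v\les\la x\ra^{-\beta}$ lies in $L^2(\R^n)$ only if $2\beta>n$, which fails for $\beta=2$ and $n\ge 5$. This does not affect the near piece (there $vf\in L^2$ via $v\in L^\infty$ and Young with the $L^1$ kernel $|x|^{2-n}\chi_{\{|x|<1\}}$ suffices), but it does affect the far piece. The clean fix is a bootstrap: the first pass gives only $|g(x)|\les\la x\ra^{-\beta}$ for large $|x|$ (Cauchy--Schwarz against $\la y\ra^{-2\beta}|x-y|^{4-2n}$ on $|x-y|>1$), which is not yet $L^2$ when $\beta$ is small; but then $f=wg$ gains extra decay $|f|\les\la x\ra^{-2\beta}$, and feeding this back into $g=-G_0^0vf$ improves the pointwise decay of $g$. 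Iterating finitely many times yields $|g(x)|\les\la x\ra^{2-n}$, hence $g\in L^2$ for $n\ge 5$. Alternatively, if you are willing to take $\beta>2$ strictly, a single application of Hardy--Littlewood--Sobolev with $vf\in L^{2n/(n+4)}$ (via $v\in L^{n/2}$) does the job in one step.

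Second, the ``subtlety'' about $f$ supported on $\{V=0\}$ is not actually present. With the standard convention $U=1$ on $\{V=0\}$ (needed anyway for $U$ to be invertible in the definition of $M^\pm$), the equation $(U+vG_0^0v)f=0$ reads $f=0$ pointwise on $\{V=0\}$, so $f$ is automatically supported in $\{V\ne 0\}$ and $U^2f=f$ holds without any quotient convention.
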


\begin{lemma}\label{D1 lemma}
 
  The kernel of $S_1vG_1^0vS_1$ is trivial in $S_1L^2(\R^n)$
  for $n\geq 5$.

\end{lemma}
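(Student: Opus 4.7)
The plan is to take $\phi \in S_1 L^2(\R^n)$ with $(S_1 v G_1^0 v S_1)\phi = 0$ and deduce $\phi = 0$. Since $S_1$ is a self-adjoint projection and $\phi = S_1\phi$, pairing the identity above with $\phi$ reduces the problem to the vanishing of a single quadratic form,
\[
	\la G_1^0 v\phi, v\phi\ra_{L^2(\R^n)} = 0.
\]
By Lemma~\ref{S characterization}, we may write $\phi = wg$ for some $g \in L^2(\R^n)$ satisfying $-\Delta g + Vg = 0$ in the distributional sense. Using $vw = v\cdot Uv = U|V| = V$ we have $v\phi = Vg$, so the vanishing becomes $\la G_1^0 Vg, Vg\ra = 0$.

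The next step is to identify $G_1^0$ as a positive multiple of $(-\Delta)^{-2}$: its integral kernel $c_1|x-y|^{4-n}$ (which is a legitimate Riesz kernel for $n \geq 5$) corresponds on the Fourier side to the positive multiplier $c|\xi|^{-4}$. Plancherel's identity then gives
\[
	\la G_1^0 Vg, Vg\ra = c\,\|(-\Delta)^{-1}Vg\|_{L^2}^2.
\]
The eigenvalue equation $-\Delta g = -Vg$, combined with the fact that $g \in L^2(\R^n)$ has no nontrivial harmonic component, yields the representation $g = -G_0^0 Vg = -(-\Delta)^{-1}Vg$. Substituting back produces
\[
	0 = \la G_1^0 Vg, Vg\ra = c\|g\|_{L^2}^2,
\]
so $g = 0$ and hence $\phi = wg = 0$, proving the triviality of the kernel.

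The main technical obstacle is establishing the positivity of the constant $c_1$. This can be verified either directly from the small-argument expansions \eqref{Jn low}--\eqref{Yn low} of the Hankel function appearing in \eqref{Hankel}, or more transparently by recognizing $G_1^0$ as the formal power series coefficient $(-\Delta)^{-2}$ in the free-resolvent expansion $R_0^\pm(\lambda^2) = G_0^0 + \lambda^2 G_1^0 + \cdots$, whose Fourier multiplier is manifestly positive. A secondary issue is justifying the identity $g = -G_0^0 Vg$ as an $L^2$ equality; here the decay hypothesis on $V$ guarantees that $Vg$ belongs to a suitable weighted $L^2$ space, so that $G_0^0 Vg$ is a well-defined element of $L^2(\R^n)$ and the Liouville-type argument (harmonic $L^2$ functions on $\R^n$ vanish) applies exactly as in the companion paper~\cite{GGodd}.
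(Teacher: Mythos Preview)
Your proof is correct and follows essentially the same approach as the paper, which defers to the companion work~\cite{GGodd}: recognize $G_1^0$ as a positive multiple of $(-\Delta)^{-2}$, rewrite the vanishing quadratic form $\la G_1^0 Vg, Vg\ra$ via Plancherel as $c\|(-\Delta)^{-1}Vg\|_{L^2}^2 = c\|g\|_{L^2}^2$ using the eigenfunction identity $g = -G_0^0 Vg$, and conclude $g=0$.
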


We note that the proof in the odd dimensional case 
involves the operator $G_2$ in place of the operator
$G_1^0$.  This is a notational discrepancy only, both
of these operators have integral kernel which is a 
scalar multiple of $|x-y|^{4-n}$.

 


\begin{lemma}\label{lem:eproj}
 
    The projection onto the eigenspace at zero is $G_0^0vS_1[S_1vG_1^0vS_1]^{-1}S_1vG_0^0$.
    That is,
    \begin{align}\label{Pe defn}
    	P_e=G_0^0vD_1vG_0^0	.
    \end{align}

\end{lemma}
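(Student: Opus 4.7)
The plan is to verify that $Q := G_0^0 v D_1 v G_0^0$ is the self-adjoint projection onto ${\rm Null}\, H$. Since $P_e$ is characterized by these three properties (self-adjointness, idempotency, and range equal to the zero-energy eigenspace), equality then follows by uniqueness of the orthogonal projection.

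Self-adjointness of $Q$ is immediate from the fact that $G_0^0$ and $G_1^0$ have real symmetric kernels and $S_1$ is a self-adjoint projection, so $S_1 v G_1^0 v S_1$ is self-adjoint on $S_1 L^2$, and hence so is its inverse $D_1$. For idempotency, the essential identity is $G_0^0 G_0^0 = G_1^0$, which follows from the free resolvent expansion $R_0^\pm(\lambda^2) = G_0^0 + \lambda^2 G_1^0 + \cdots$ together with the resolvent identity $(-\Delta)^{-1}(-\Delta)^{-1} = (-\Delta)^{-2}$. Using this and $D_1 = S_1 D_1 S_1$, we get
\begin{equation*}
Q^2 = G_0^0 v D_1 (v G_1^0 v) D_1 v G_0^0 = G_0^0 v \, S_1 D_1 (S_1 v G_1^0 v S_1) D_1 S_1 \, v G_0^0 = G_0^0 v D_1 v G_0^0 = Q,
\end{equation*}
where in the middle step we used the defining property $D_1 (S_1 v G_1^0 v S_1) = S_1$ on $S_1 L^2$.

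To identify the range, I will invoke Lemma~\ref{S characterization}: every $f \in S_1 L^2$ is of the form $f = w\mg$ for a unique $\mg \in {\rm Null}\, H$, and the correspondence is a bijection with inverse $\mg = -G_0^0 v f$ (verified by applying $-\Delta$ and using $vw = V$, together with the eigenfunction equation $-\Delta \mg = -V\mg$). Since $D_1 = S_1 D_1 S_1$, the range of $Q$ lies in $G_0^0 v(S_1 L^2) = {\rm Null}\,H$. Conversely, for $\mg \in {\rm Null}\,H$ set $f = w\mg \in S_1 L^2$, so $\mg = -G_0^0 v f$. Then $v G_0^0 \mg = -v G_0^0 G_0^0 v f = -v G_1^0 v f$, and applying $D_1$ yields $D_1 v G_0^0 \mg = -D_1(S_1 v G_1^0 v S_1) f = -S_1 f = -f$. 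Finally $Q\mg = -G_0^0 v f = \mg$, confirming that $Q$ acts as the identity on the eigenspace.

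The only technical point requiring care is making sense of the operator composition $G_0^0 G_0^0 = G_1^0$ on the relevant vectors, since $G_0^0$ is an unbounded convolution with $|x-y|^{2-n}$. This is not a real obstacle because each intermediate composition is applied to expressions of the form $v h$ with $h \in L^2$, and the decay hypothesis on $V$ ensures $v h$ lies in sufficiently weighted $L^2$ spaces for the kernels of $G_0^0$ and $G_1^0$ to act boundedly. With that verified, the argument is purely algebraic and runs in parallel with the corresponding proof in the odd-dimensional companion paper \cite{GGodd}.
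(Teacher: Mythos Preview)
Your proof is correct and follows the same approach that the paper defers to: the paper omits the proof of this lemma entirely, stating that the argument is identical to the one in the odd-dimensional companion paper~\cite{GGodd}, and your argument---verifying self-adjointness, idempotency via the identity $G_0^0G_0^0=G_1^0$, and range identification through the bijection $f=wg\leftrightarrow g=-G_0^0vf$ of Lemma~\ref{S characterization}---is precisely that argument.
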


\begin{lemma}\label{lem:efnLinf}

	Assume that $|V(x)|\les \la x\ra^{-\beta}$ for some $\beta>2$,	
	If $g\in L^2$ is a solution of $(-\Delta+V)g=0$
	then $g\in L^\infty$.

\end{lemma}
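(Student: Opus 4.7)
The plan is to recast $(-\Delta+V)g=0$ as the integral equation $g = -G_0^0(Vg)$, where $G_0^0$ is convolution with $c_n|x-y|^{2-n}$, and then bootstrap the Lebesgue integrability of $g$ until it can be placed in $L^\infty$ by an elementary kernel estimate. To set up the integral equation, I would note that $Vg\in L^2$ since $V\in L^\infty$, so $h:=G_0^0(Vg)$ is well defined and lies in $L^{2n/(n-4)}$ by the Hardy--Littlewood--Sobolev inequality (using $n\geq 6$). The distributional identity $-\Delta(g+h)=0$ exhibits $g+h$ as a harmonic function whose ball-averages tend to zero at infinity, so the mean value property forces $g+h\equiv 0$ and therefore $g=-G_0^0(Vg)$.

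For the bootstrap, the hypothesis $\beta>2$ guarantees that $V\in L^r(\R^n)$ for some $r>n/2$ in addition to $V\in L^\infty$. Starting from $q_0=2$ and applying H\"older against $V\in L^r$ followed by HLS to $g=-G_0^0(Vg)$, one finds inductively that $g\in L^{q_{k+1}}$ with $\tfrac{1}{q_{k+1}}=\tfrac{1}{q_k}+\tfrac{1}{r}-\tfrac{2}{n}$. Since $\tfrac{1}{r}-\tfrac{2}{n}<0$, each iterate gains a fixed positive amount of integrability, and after finitely many steps we reach $g\in L^{q_*}$ for some $q_*>n/2$.

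Once $g\in L^{q_*}$ with $q_*>n/2$, I would produce $Vg\in L^{p_-}\cap L^{p_+}$ for a pair of exponents $p_-<n/2<p_+$ by taking $p_+=q_*$ via $V\in L^\infty$ and choosing a finite $s>n/\beta$ small enough that $\tfrac{1}{p_-}=\tfrac{1}{q_*}+\tfrac{1}{s}>\tfrac{2}{n}$ (which is possible precisely because $\beta>2$). Splitting the integral $|g(x)|\les\int|x-y|^{2-n}|Vg|(y)\,dy$ into the regions $|x-y|\leq 1$ and $|x-y|>1$, H\"older against $Vg\in L^{p_+}$ controls the near piece (valid since $p_+'<n/(n-2)$) and H\"older against $Vg\in L^{p_-}$ controls the far piece (valid since $p_-'>n/(n-2)$), both uniformly in $x$. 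This yields $g\in L^\infty$.

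The step I expect to require the most care is confirming that the bootstrap exponent $r>n/2$ and the closing exponent $s>n/\beta$ coexist with the HLS range conditions at every stage; both requirements reduce to the hypothesis $\beta>2$, which is exactly why that threshold appears in the lemma.
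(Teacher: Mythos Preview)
Your argument is correct. The paper itself omits the proof of this lemma, pointing instead to the companion odd-dimensional paper \cite{GGodd}, where the same statement is proved; the standard argument there is precisely the bootstrap you outline: rewrite the eigenvalue equation as $g=-G_0^0(Vg)$ and iterate Hardy--Littlewood--Sobolev together with H\"older against $V\in L^r$ for $r>n/2$ until $g$ lands in $L^\infty$. So your approach is essentially the same as the paper's (by reference).

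Two minor points worth tightening. First, in the bootstrap you should guard against overshooting the HLS range: if at some step $1/q_k+1/r-2/n\le 0$, simply replace $r$ at that final step by a smaller exponent $r'\in(n/2,r)$ so that $1/q_{k+1}\in(0,2/n)$; this is straightforward but should be said explicitly. Second, in setting up the integral equation you invoke Weyl's lemma implicitly when you apply the mean value property to the distributional solution $g+h$ of $-\Delta(g+h)=0$; this is standard but worth a one-line remark since $g+h$ is a priori only in $L^2+L^{2n/(n-4)}$. Both issues are routine and you have clearly anticipated them in your closing paragraph.
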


\begin{corollary}\label{Pemapping}

	$P_e$ is bounded operator from $L^1$ to $L^\infty$.

\end{corollary}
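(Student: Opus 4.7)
The plan is to exploit the finite-rank structure of $P_e$ together with the $L^\infty$ regularity of zero-energy eigenfunctions supplied by Lemma~\ref{lem:efnLinf}. Recall from the discussion just before Lemma~\ref{lem:D0bdd} that the Riesz projection $S_1$ onto $\ker(U+vG_0^0v)$ is finite rank by the Fredholm alternative, since $U + vG_0^0v$ is a compact perturbation of the invertible operator $U$. Since $D_1 = S_1 D_1 S_1$ is an operator living on the finite-dimensional subspace $S_1 L^2(\R^n)$, the formula $P_e = G_0^0 v D_1 v G_0^0$ from Lemma~\ref{lem:eproj} exhibits $P_e$ as a finite-rank operator as well.

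Next I would use Lemma~\ref{S characterization} together with Lemma~\ref{lem:eproj} to identify the range of $P_e$ concretely with the $L^2$ zero-energy eigenspace of $H$. Since $P_e$ is an $L^2$ orthogonal projection of finite rank, we may fix an $L^2$-orthonormal basis $\{\psi_1, \ldots, \psi_N\}$ of the eigenspace and write
\begin{equation*}
P_e f(x) = \sum_{j=1}^N \psi_j(x) \int_{\R^n} \overline{\psi_j(y)} f(y)\, dy.
\end{equation*}
Each $\psi_j$ satisfies $(-\Delta + V)\psi_j = 0$ with $\psi_j \in L^2(\R^n)$, so Lemma~\ref{lem:efnLinf} yields $\psi_j \in L^\infty(\R^n)$ (the hypothesis $|V(x)| \les \la x \ra^{-\beta}$ with $\beta > 2$ is comfortably implied by the running assumption $\beta > n \geq 6$).

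Finally, the bound is immediate: for $f \in L^1(\R^n)$,
\begin{equation*}
\|P_e f\|_{L^\infty} \leq \sum_{j=1}^N \|\psi_j\|_{L^\infty} |\langle f, \psi_j\rangle|
\leq \Big(\sum_{j=1}^N \|\psi_j\|_{L^\infty}^2\Big) \|f\|_{L^1},
\end{equation*}
which establishes boundedness $L^1 \to L^\infty$. There is no real obstacle here; the only nontrivial input is the pointwise boundedness of the eigenfunctions, which is already supplied by Lemma~\ref{lem:efnLinf}. The mild technical point worth flagging is checking that the integral kernel representation above is genuinely the same operator as $G_0^0 v D_1 v G_0^0$; this is the content of Lemma~\ref{lem:eproj} combined with the identification of $S_1 L^2$ with the eigenspace via the map $f \mapsto G_0^0 v f / \|G_0^0 v f\|$ from Lemma~\ref{S characterization}, so no further work is needed.
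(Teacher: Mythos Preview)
Your proof is correct and is precisely the argument the paper has in mind: the corollary is stated immediately after Lemma~\ref{lem:efnLinf} (with no proof given, as the section defers to~\cite{GGodd}), and the intended reasoning is exactly that $P_e$ is the finite-rank orthogonal projection onto the zero-energy eigenspace, whose basis elements lie in $L^\infty$ by that lemma.
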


In addition we have the following oscillatory integral
bounds which prove useful in the preceding analysis.
Some of these Lemmas along with their proofs appear 
in Section~6 of \cite{GGodd},
accordingly we state them without proof.

\begin{lemma}\label{lem:IBP}

	If $k\in \mathbb N_0$, we have the bound
	$$
		\bigg|\int_0^\infty e^{it\lambda^2} \chi(\lambda)
		\lambda^k \, d\lambda\bigg| \les |t|^{-\frac{k+1}{2}}.
	$$

\end{lemma}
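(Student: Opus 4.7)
The plan is to estimate the oscillatory integral by splitting at the scale $\lambda \sim |t|^{-1/2}$, which is the natural dividing line between the region where $e^{it\lambda^2}$ does not oscillate significantly and the region where repeated integration by parts against the phase becomes effective.

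First I would write
\begin{equation*}
\int_0^\infty e^{it\lambda^2}\chi(\lambda)\lambda^k\,d\lambda = \int_0^{|t|^{-1/2}} e^{it\lambda^2}\chi(\lambda)\lambda^k\,d\lambda + \int_{|t|^{-1/2}}^\infty e^{it\lambda^2}\chi(\lambda)\lambda^k\,d\lambda.
\end{equation*}
The first integral is estimated trivially by pulling absolute values inside:
\begin{equation*}
\bigg|\int_0^{|t|^{-1/2}} e^{it\lambda^2}\chi(\lambda)\lambda^k\,d\lambda\bigg| \leq \int_0^{|t|^{-1/2}}\lambda^k\,d\lambda = \frac{|t|^{-(k+1)/2}}{k+1},
\end{equation*}
which is exactly the desired bound.

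For the second integral I would use the identity $e^{it\lambda^2} = \frac{1}{2it\lambda}\partial_\lambda e^{it\lambda^2}$ and integrate by parts $N$ times, choosing $N$ large enough that $k-2N < -1$. Each integration by parts produces a factor of $(2it)^{-1}$, differentiates a power $\lambda^{k-2j+1}$, and potentially differentiates the cutoff $\chi$. The boundary term at $\lambda = \infty$ vanishes because of $\chi$; the boundary term at $\lambda = |t|^{-1/2}$ contributes at stage $j$ a factor of size $|t|^{-j}\cdot |t|^{-(k-2j+1)/2} = |t|^{-(k+1)/2}$, which is acceptable. The interior remainder after $N$ steps has the form $|t|^{-N}\int_{|t|^{-1/2}}^{\lambda_1}\lambda^{k-2N}\,d\lambda$ plus similar integrals where derivatives have landed on $\chi$; since $\chi'$ is supported on $\lambda\sim 1$, those pieces trivially contribute $O(|t|^{-N})$ with $N$ as large as we like, while the main integral with $k-2N<-1$ is $O(|t|^{-N}\cdot |t|^{(2N-k-1)/2}) = O(|t|^{-(k+1)/2})$.

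The one small subtlety to watch is the boundary at $\lambda = 0$ when $k \in \{0,1\}$: here, however, we have already removed the interval $[0,|t|^{-1/2}]$ before beginning the integration by parts, so no term is ever evaluated at $\lambda = 0$ and the argument goes through uniformly in $k \geq 0$. There is no real obstacle; the estimate is a clean application of the two standard tools (trivial bound near the stationary point of the real part of the phase, and integration by parts away from it), which together balance at the scale $\lambda \sim |t|^{-1/2}$ to give the stated decay rate.
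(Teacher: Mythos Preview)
Your argument is correct; the split at $\lambda=|t|^{-1/2}$ followed by a trivial bound on the inner piece and repeated integration by parts against $e^{it\lambda^2}=\frac{1}{2it\lambda}\partial_\lambda e^{it\lambda^2}$ on the outer piece is exactly the standard mechanism, and your bookkeeping of boundary terms and the final integrable remainder is accurate. The paper itself omits the proof and defers to the companion odd-dimensional paper, but the same technique appears verbatim in the proof of Lemma~\ref{lem:off_center_statphase} here, so your approach is the intended one.
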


\begin{lemma}\label{lem:fauxIBP}

	For a fixed $\alpha>-1$,
	let $f(\lambda)=\widetilde O_{k+1}(\lambda^\alpha)$ be supported on the 
	interval $[0,\lambda_1]$ for some $0<\lambda_1\les 1$.
	Then, if $k$ satisfies $-1<\alpha-2k< 1$ we have
	\begin{align*}
		\bigg|\int_0^\infty e^{it\lambda^2} 
		f(\lambda)\, d\lambda\bigg|&\les |t|^{-\frac{\alpha+1}{2}}.
	\end{align*}
	
\end{lemma}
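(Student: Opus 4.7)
The plan is to split the integral at the transition scale $\lambda_*=|t|^{-1/2}$, where the Gaussian phase $e^{it\lambda^2}$ passes from essentially stationary to rapidly oscillating. The two resulting pieces will be treated by different methods: a trivial majorization on the low-frequency side and iterated integration by parts against the Gaussian on the high-frequency side.

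On the low-frequency piece $[0,\lambda_*]$ I would simply discard the oscillation and use the pointwise bound $|e^{it\lambda^2}f(\lambda)|\les \lambda^\alpha$. Since $\alpha>-1$, an elementary calculation gives
\begin{equation*}
\biggl|\int_0^{|t|^{-1/2}} e^{it\lambda^2}f(\lambda)\,d\lambda\biggr|\les\int_0^{|t|^{-1/2}}\lambda^\alpha\,d\lambda\les |t|^{-(\alpha+1)/2},
\end{equation*}
which already matches the target order.

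On the high-frequency piece $[\lambda_*,\lambda_1]$ I exploit the identity $e^{it\lambda^2}=(2it\lambda)^{-1}\partial_\lambda e^{it\lambda^2}$ and integrate by parts. Each step replaces the amplitude $h$ by $-(2it)^{-1}\partial_\lambda(h/\lambda)$, which carries a symbol in $\widetilde O_j(\lambda^\beta)$ to one in $\widetilde O_{j-1}(\lambda^{\beta-2}/|t|)$. Starting from $f\in\widetilde O_{k+1}(\lambda^\alpha)$ I can afford exactly $k+1$ such iterations, arriving at an amplitude in $\widetilde O_0(\lambda^{\alpha-2k-2}/|t|^{k+1})$. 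The hypothesis $\alpha-2k<1$ forces $\alpha-2k-2<-1$, so the $\lambda$-antiderivative is dominated by its value at the lower endpoint and direct computation yields
\begin{equation*}
|t|^{-(k+1)}\int_{|t|^{-1/2}}^{\lambda_1}\lambda^{\alpha-2k-2}\,d\lambda\les |t|^{-(k+1)}\lambda_*^{\alpha-2k-1}=|t|^{-(\alpha+1)/2}.
\end{equation*}
The boundary terms produced at $\lambda=\lambda_*$ at every intermediate IBP step are of size $|t|^{-j}\lambda_*^{\alpha-2j+1}=|t|^{-(\alpha+1)/2}$, so they combine to give the same order and never dominate.

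The main bookkeeping obstacle is the boundary contributions at $\lambda=\lambda_1$, which need not vanish under the literal statement but are understood to be suppressed by an implicit smooth cutoff (this is how the lemma is always applied in the paper). The two-sided hypothesis $-1<\alpha-2k<1$ plays a conceptual role: the upper bound identifies $k+1$ as the smallest number of IBPs making the resulting symbol non-integrable at the origin, which is precisely why the split at $\lambda_*$ matches cleanly; the lower bound reflects that $f$ has exactly $k+1$ useful derivatives and no spare. Verifying that all the intermediate IBP boundary terms telescope to the desired exponent, rather than producing a spurious worse power of $|t|$, is the one step that deserves attention in the write-up.
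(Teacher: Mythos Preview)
The paper does not prove this lemma; it states the result and refers to Section~6 of the companion paper \cite{GGodd} for the argument.  Your proof is correct and is the standard one for estimates of this type: split at $\lambda_*=|t|^{-1/2}$, bound the piece $[0,\lambda_*]$ trivially using $\alpha>-1$, and on $[\lambda_*,\lambda_1]$ integrate by parts $k+1$ times against $e^{it\lambda^2}$, using $\alpha-2k<1$ so that the final integrand $\lambda^{\alpha-2k-2}$ is dominated by its value at $\lambda_*$.  Your observation that the boundary contributions at $\lambda=\lambda_1$ vanish because of the implicit smooth cutoff $\chi(\lambda)$ is exactly right and matches how the lemma is applied throughout the paper.

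One minor correction to your commentary: the lower bound $\alpha-2k>-1$ is not really about ``no spare derivatives.''  It says that stopping after only $k$ integrations by parts would leave an integrand $\lambda^{\alpha-2k}$ that is integrable at the origin and hence dominated by the upper endpoint $\lambda_1$, yielding only $|t|^{-k}$ rather than $|t|^{-(\alpha+1)/2}$.  Thus $k+1$ is the \emph{minimal} number of integrations by parts that works, and the hypothesis $f\in\widetilde O_{k+1}$ supplies exactly that many.
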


The following two bounds take advantage of the fact that
$n$ is even and hence $\f n2$ is an integer.

\begin{lemma}\label{lem:fauxIBP2}

	If $\alpha>n-3$ and
	$f(\lambda)=\widetilde O_{\f n2-1}(\lambda^{\alpha})$  supported on the 
	interval $[0,\lambda_1]$ for some $0<\lambda_1\les 1$.
	Then, 
	\begin{align*}
		\bigg|\int_0^\infty e^{it\lambda^2} 
		f(\lambda)\, d\lambda\bigg|&\les |t|^{1-\frac{n}{2}}.
	\end{align*}
	
\end{lemma}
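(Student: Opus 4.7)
The approach is to integrate by parts exactly $\tfrac{n}{2}-1$ times against the oscillating factor, using the identity
$$
e^{it\lambda^2} = \frac{1}{2it\lambda}\,\partial_\lambda e^{it\lambda^2}.
$$
The key observation is that $n$ being even makes $\tfrac{n}{2}-1$ an integer, so the regularity class $\widetilde O_{\f n2-1}$ of $f$ provides exactly the right number of derivatives to carry out this many integrations by parts, and no optimization over the number of IBPs (as in Lemma~\ref{lem:fauxIBP}) is needed.

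Each integration by parts replaces $e^{it\lambda^2}f(\lambda)$ by $-\frac{1}{2it}e^{it\lambda^2}\partial_\lambda\!\big(f(\lambda)/\lambda\big)$, and the new integrand is easily seen to lie in $\widetilde O_{\f n2 - 2}(\lambda^{\alpha-2})$. Iterating $k$ times, the remaining integrand belongs to $\widetilde O_{\f n2 - 1 - k}(\lambda^{\alpha - 2k})$ and carries an overall prefactor $(2it)^{-k}$. Taking $k = \tfrac{n}{2}-1$ leaves
$$
|t|^{1-\f n2}\int_0^{\lambda_1} e^{it\lambda^2} g(\lambda)\,d\lambda, \qquad g=\widetilde O(\lambda^{\alpha - n + 2}),
$$
and since $\alpha - n + 2 > -1$ by hypothesis, the absolute value of this last integral is dominated by $\int_0^{\lambda_1}\lambda^{\alpha - n + 2}\,d\lambda \les 1$, which yields the claimed bound.

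The only item to check is that no boundary terms obstruct the procedure. At $\lambda = \lambda_1$ they vanish because $f$ is supported on $[0,\lambda_1]$, which we read (as in Lemma~\ref{lem:fauxIBP}) as $f$ vanishing smoothly at that endpoint. At $\lambda = 0$, the boundary contribution generated at the $k$-th step is comparable to $\lambda^{\alpha - 2k+1}/|t|^{k}$ evaluated at $\lambda=0$, which vanishes precisely when $\alpha > 2k-1$. The worst of these constraints, occurring at $k = \tfrac{n}{2}-1$, is exactly $\alpha > n - 3$, matching our hypothesis. This is the only place where the sharp lower bound on $\alpha$ is used, and there is no real obstacle beyond this bookkeeping: the lemma is essentially a direct accounting of how the regularity of $f$ permits the maximum number of integrations by parts in even dimensions.
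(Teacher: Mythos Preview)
Your proof is correct and takes essentially the same approach as the paper: integrate by parts $\tfrac{n}{2}-1$ times against $e^{it\lambda^2}$ and bound the remaining integral directly using $\alpha-n+2>-1$. The paper's proof is terser (it simply asserts there are no boundary terms and that the final integrand is $\widetilde O(\lambda^{-1+})$), whereas you have spelled out the bookkeeping on the boundary terms and the induction on the integrand's order, but the content is identical.
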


\begin{proof}

	The powers of $\lambda$ allow us to integrate by
	parts $\f n2-1$ times with no boundary terms,
	we are left to bound
	$$
		|t|^{1-\f n2}\int_0^\infty e^{it\lambda^2}
		\widetilde O(\lambda^{-1+})\, d\lambda.
	$$
	By the assumption that the integral is supported
	on $[0,\lambda_1]$ the integral is bounded.

\end{proof}

\begin{corollary}\label{cor:fauxIBP2}

	If $\alpha>n-1$ and
	$f(\lambda)=\widetilde O_{\f n2}(\lambda^{\alpha})$  supported on the 
	interval $[0,\lambda_1]$ for some $0<\lambda_1\les 1$.
	Then, 
	\begin{align*}
		\bigg|\int_0^\infty e^{it\lambda^2} 
		f(\lambda)\, d\lambda\bigg|&\les |t|^{-\frac{n}{2}}.
	\end{align*}
	
\end{corollary}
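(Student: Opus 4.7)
The plan is to mimic the proof of Lemma~\ref{lem:fauxIBP2}, taking advantage of the one additional available derivative ($\widetilde O_{\f n2}$ in place of $\widetilde O_{\f n2-1}$) together with the improved decay assumption ($\alpha>n-1$ instead of $\alpha>n-3$) in order to perform one more integration by parts. This is exactly what yields the extra factor of $|t|^{-1}$ over the preceding lemma.

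Concretely, I would apply the identity $e^{it\lambda^2}=(2it\lambda)^{-1}\partial_\lambda e^{it\lambda^2}$ and integrate by parts a total of $\f n2$ times. Since differentiation in $\lambda$ is comparable to division by $\lambda$ in the tilde-notation, after $k$ iterations the amplitude lies in the class $\widetilde O(\lambda^{\alpha-2k})$ divided by $(2it)^k$, while the corresponding boundary term at $\lambda=0$ is of order $\lambda^{\alpha-2k+1}/t^k$. After the full $\f n2$ iterations the integral takes the form
\begin{equation*}
|t|^{-\f n2}\int_0^{\lambda_1}e^{it\lambda^2}\,\widetilde O(\lambda^{\alpha-n})\,d\lambda,
\end{equation*}
and the hypothesis $\alpha>n-1$ ensures $\alpha-n>-1$, so the remaining integrand is absolutely integrable on $[0,\lambda_1]$ and produces the desired bound of $O(|t|^{-\f n2})$.

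The only delicate point is verifying that every boundary contribution vanishes. At $\lambda=\lambda_1$ this is immediate from the support hypothesis together with the continuity of $f$ and its derivatives. At $\lambda=0$, the boundary term after the $k$-th integration by parts is of size $\lambda^{\alpha-2k+1}$, which tends to zero precisely when $\alpha>2k-1$. The tightest constraint occurs at the final step $k=\f n2$, where one needs $\alpha>n-1$, which matches the hypothesis exactly. This borderline verification is the only real obstacle, but it is met by assumption, so the argument goes through without further difficulty.
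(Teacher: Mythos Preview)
Your proposal is correct and follows precisely the approach implicit in the paper: the corollary is stated without proof because it is obtained from Lemma~\ref{lem:fauxIBP2} by performing one additional integration by parts, which is exactly what you do. Your verification that the boundary terms vanish (the tightest constraint $\alpha>n-1$ arising at the final step $k=\f n2$) and that the remaining integrand $\widetilde O(\lambda^{\alpha-n})$ is absolutely integrable is the complete justification the paper leaves to the reader.
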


The following proof completes the
dispersive bounds proven in Section~\ref{sec:sing}.

\begin{proof}[Proof of Lemma~\ref{lem:off_center_statphase}]
Assume that $t > 0$.  The proof for $t<0$ is identical with the $\pm$ signs reversed.
Suppose the phase angle $e^{i\lambda r}$ carries a positive sign.  In this case there is no
stationary phase point of $e^{it\lambda^2 + \lambda r}$ in the domain of intergation.
One can estimate trivially that
\begin{equation*}
\Big|\int_0^{t^{-1/2}} e^{i(t\lambda^2+\lambda r)}\lambda^{m-1}
\Omega(\lambda r)\chi(\lambda)\,d\lambda\Big|
\les t^{-\frac{m}{2}},
\end{equation*}
and repeated integration by parts against
$e^{it(\lambda^2+\lambda \frac{r}{t})}$
($\frac{m}{2}$ times if $m$ is even,
$\frac{m+1}{2}$ if $m$ is odd) gives the result.  It is convenient to note that
$|(\frac{d}{d\lambda})^k \Omega(\lambda r)| \les \max(r,\lambda^{-1})^k \la \lambda r \ra^{\frac{1-m}{2}}$,
so differentiating this expression has a similar effect as when derivatives act on
the monomial $\lambda^{m-1}$ and is better behaved when $\lambda r$ is small.

All boundary terms of the repeated integration by parts can be controlled using the crude bound
$|\lambda + \frac{r}{2t}| \geq |\lambda|$.  Most of the integral terms are controlled this way as well,
but if $m$ is even this creates a few apparent terms of the form
$\int_{t^{-1/2}}^\infty \big| \lambda^{-1} \Omega(\lambda r)\chi(\lambda)\big|\, d\lambda$
if all derivatives fall on powers of $\lambda$ or $(\lambda + \frac{r}{2t})$.  In fact no such
terms occur, due to cancellation in the derivative $\frac{d}{d\lambda}\big(\frac{\lambda}{\lambda+ \frac{r}{2t}}\big)
= \frac{r}{2t\,(\lambda + r/2t)^2}$.  That leads instead to integrals of the form
\begin{align*}
\frac{r}{2t}\int_{t^{-1/2}}^\infty \big| (\lambda+r/2t)^{-2}\Omega(\lambda r)\chi(\lambda)\big|\,d\lambda \les \frac{r}{2t}
\int_{0}^\infty \frac{1}{(\lambda+r/2t)^2}\, d\lambda\les 1.
\end{align*}

Now consider the phase angle $e^{-i\lambda r}$, which causes
$e^{i(t\lambda^2 - \lambda r)}$ to have  a stationary point $\lambda_0=\frac{r}{2t}$.
If  $r <4 \sqrt{t}$, then $0\leq\lambda_0<2t^{-\f 12}$, and the integral
can be estimated in the same manner as above, splitting the domain into the
two pieces $(0,4t^{-\frac12})$ and $(4t^{-\f12},\infty)$.  On the
first interval, the bound is clear.   On the second
interval, the comparison $|\lambda - \lambda_0| \approx |\lambda|$
controls all boundary terms and most of the integral terms as before.  For the
exceptional integrals, the last bound comes from estimating
\begin{align*}
\frac{r}{2t}\int_{4t^{-1/2}}^\infty \big| (\lambda-r/2t)^{-2}\Omega(\lambda r)\chi(\lambda)\big|\,d\lambda \les \frac{r}{2t}
\int_{\frac{r}{t}}^\infty \frac{1}{(\lambda-r/2t)^2}\,d\lambda \les 1.
\end{align*}

If  $r > 4\sqrt{t}$, then $\lambda_0 > 2t^{-\frac12}$.
Here we apply stationary phase estimates to the interval $(\lambda_0-t^{-\f 12},\lambda_0+t^{-\f12})$.  On this interval
one can approximate $\lambda \approx \lambda_0$, and consequently
$|\lambda^{m-1} \Omega(\lambda r)|\approx |\lambda_0^{m-1} \Omega(\lambda_0 r)| \les t^{\frac{1-m}{2}}$.
So this integral over the interval $|\lambda - \lambda_0| < t^{-1/2}$  contributes no more
than $t^{-m/2}$ as desired.

Noting that $\partial_\lambda e^{it(\lambda-\lambda_0)^2}
=2it(\lambda-\lambda_0)e^{it(\lambda-\lambda_0)^2}$,
integration by parts 
on the interval $[\lambda_0 + t^{-1/2}, +\infty)$ is relatively straightforward.
Since $\lambda > \lambda - \lambda_0>t^{-\f 12}$, the worst behavior occurs when all derivatives act on powers
of $(\lambda - \lambda_0)$.  For all boundary terms arising in this manner it suffices to observe that
$\lambda - \lambda_0 = t^{-1/2}$ and
$|\lambda^{m-1}\Omega(\lambda r)| \les t^{\frac{1-m}{2}}$ at the left endpoint.  The integral terms
is controlled by the estimate
\begin{align*}
	t^{-k}\int_{\lambda_0 + t^{-1/2}}^\infty \frac{\lambda^{m-1}\Omega(\lambda r)}{(\lambda-\lambda_0)^{2k}}
	\,d\lambda
	&\les t^{-k}\int_{\lambda_0 + t^{-1/2}}^{2\lambda_0} 
	\frac{|\lambda_0^{m-1}\Omega(\lambda_0 r)|}{(\lambda-\lambda_0)^{2k}} \,d\lambda 
	+ t^{-k} \int_{2\lambda_0}^\infty \frac{|\Omega(\lambda r)|}{\lambda^{2k+1-m}} \,d\lambda
\end{align*}
We note that we still have 
$|\lambda_0^{m-1} \Omega(\lambda_0 r)| \les t^{\frac{1-m}{2}}$, thus by a simple change of variables
we can bound the first integral by
\begin{align*}
	t^{\frac{1-m}{2}-k}\int_{t^{-\f12}}^\infty s^{-2k}\,
	ds \les t^{-\f m2},
\end{align*}
provided $2k>1$.  For the second integral, we have that
$|\Omega(\lambda r)|\les (\lambda r)^{\frac{1-m}{2}}$,
and $2\lambda_0=r/t$, 
so we need to bound
\begin{align*}
	r^{\frac{1-m}{2}}t^{-k} \int_{r/t}^\infty
	\lambda^{\frac{m-1}{2}-2k}\, d\lambda
	&\les r^{\frac{1-m}{2}}t^{-k}
	\bigg(\frac{r}{t}
	\bigg)^{\frac{m+1}{2}-2k} \les r^{1-2k}t^{k-\frac{m+1}{2}} \les t^{-\f m2}
\end{align*}
provided $2k > \max(1,\frac{m+1}{2})$.  
Here we used that $r > 4\sqrt{t}$ in the last inequality.

Integration by parts on the interval $[0, \lambda_0 - t^{-1/2})$ is only slightly more complicated.
For all $m > 2$ there are no boundary terms at $\lambda= 0$, and if $m = 1$ the boundary term
has size $(\lambda_0 t)^{-1} \approx r^{-1} \les t^{-\f 12}$ since $r > 4\sqrt{t}$. 
The boundary terms at $\lambda_0 - t^{-\f 12}$ are handled identically to the ones at
$\lambda_0 + t^{-\f 12}$ in the previous case.

When $m$ is even, after integrating by parts $\f m2$ times,
the main integral  consists of expressions with the form
\begin{equation}\label{eq:Omegaosc}
t^{-\frac{m}{2}} \int_0^{\lambda_0 - t^{-\frac12}} 
\big|\lambda^{m-1-j} (\lambda - \lambda_0)^{j+\ell-m}
 r^\ell \Omega^{(\ell)}(\lambda r)|\, d\lambda
\end{equation}
with $j + \ell \leq \frac{m}{2}$.
There are three regimes to consider: $\lambda \in (0,\frac{1}{r})$, 
$\lambda \in (\frac{1}{r}, \frac{\lambda_0}{2})$, and $\lambda \in (\frac{\lambda_0}{2}, \lambda_0- t^{-\frac12})$.
In the first regime we use that $|\Omega^{(\ell)}(\lambda r)|\les 1$ and $|\lambda-\lambda_0|\approx \lambda_0$,
to see that this integral contributes at most
$t^{-\frac{m}{2}}(\frac{t}{r^2})^{m-j-\ell}\les t^{-\f m2}$ to the \eqref{eq:Omegaosc}. 
On the second regime, we again have $|\lambda-\lambda_0|\approx \lambda_0$
but now $|\Omega^{(\ell)}(\lambda r)|\les (\lambda r)^{\frac{1-m}{2}-\ell}$.  The contribution of this
regime to the integral is now bounded by
\begin{align*}
	t^{-\f m2}\lambda_0^{j+\ell-m} r^{\frac{1-m}{2}}
	\int_0^{\lambda_0} \lambda^{\frac{m-1}{2}-j-\ell}
	\, d\lambda \les t^{-\f m2} r^{\frac{1-m}{2}} 
	\lambda_0^{\frac{1-m}{2}} =t^{-\f m2} \bigg(\frac{\sqrt{t}}{r}\bigg)^{(m-1)}\les
	t^{-\f m2}.
\end{align*}
Since $\frac{m-1}{2}-j-\ell>-1$, we safely extended
the lower limit of integration to zero.

On the last regime we note that $\lambda \approx \lambda_0$, so that if we use $s=\lambda_0-\lambda$ we
can bound the contribution by
\begin{align*}
	t^{-\f m2} \lambda_0^{m-1-j} \Omega^{(\ell)}(\lambda_0 r) r^{\ell} \int_{t^{-\f 12}}^{\lambda_0} s^{j+\ell-m}
	\, ds.
\end{align*}
We first consider the case in which
$j+\ell-m <-1$, then we can bound this integral by
\begin{align*}
	t^{-\f m2} r^{\frac{1-m}{2}} \lambda_0^{\frac{m-1}{2}-j-\ell} \int_{t^{-\f 12}}^\infty  s^{j+\ell-m}\, ds &\les
	t^{-\f m2} \bigg( \frac{t^{\frac{1}{2}}}{r}\bigg)^{j+\ell}
	\les t^{-\f m2}.
\end{align*}
The one exception is if $m=2$ and $j + \ell = 1$, then 
we cannot extend the region of integration off to infinity,
but instead note that
$$
 	\int_{t^{-\f 12}}^{\lambda_0/2}  s^{j+\ell-m}\, ds
 	=\int_{t^{-\f 12}}^{\lambda_0/2}   s^{-1}\, ds
 	=\log \bigg(\frac{\lambda_0}{2t^{-\f 12}}
 	\bigg)
$$
So that in this case the third
region instead contributes $t^{-\f m2}(\frac{\sqrt{t}}{r})\bigl|\log(\frac{4\sqrt{t}}{r})\bigr|$
which is still uniformly bounded by $t^{-\f m2}$ since $\sqrt{t}/r<\frac14$.

When $m$ is odd the representative expressions are
\begin{equation*}
t^{-\frac{m+1}{2}} \int_0^{\lambda_0 - t^{-\frac12}} 
\big|\lambda^{m-1-j} (\lambda - \lambda_0)^{j+\ell-m-1}
 r^\ell \Omega^{(\ell)}(\lambda r)\bigr|\, d\lambda
\end{equation*}
with $j + \ell \leq \frac{m+1}{2}$.  After breaking the integral into the same three
regimes, one can similarly show that the contribution of each one is bounded by $t^{-\frac{m}{2}}$
as above.  There is again a logarithmic issue in the second regime if $j+\ell = \frac{m+1}{2}$
and in third regime if $m=1$ and $j+\ell=1$.  Both are resolved by the fact that
$(\frac{\sqrt{t}}{r})^m\bigl|\log(\frac{4\sqrt{t}}{r})\bigr|$ is uniformly bounded over $r > 4\sqrt{t}$.

\end{proof}

Finally we note the non-oscillatory integral estimate
which is proven in \cite{EG}.

\begin{lemma}\label{EG:Lem}

	Fix $u_1,u_2\in\R^n$ and let $0\leq k,\ell<n$, 
	$\beta>0$, $k+\ell+\beta\geq n$, $k+\ell\neq n$.
	We have
	$$
		\int_{\R^n} \frac{\la z\ra^{-\beta-}}
		{|z-u_1|^k|z-u_2|^\ell}\, dz
		\les \left\{\begin{array}{ll}
		(\frac{1}{|u_1-u_2|})^{\max(0,k+\ell-n)}
		& |u_1-u_2|\leq 1\\
		\big(\frac{1}{|u_1-u_2|}\big)^{\min(k,\ell,
		k+\ell+\beta-n)} & |u_1-u_2|>1
		\end{array}
		\right.
	$$
	Furthermore,
	$$
		\int_{\R^n} \frac{\la z\ra^{-\beta-}}
		{|z-u_1|^k|z-u_2|^\ell}\, dz
		\les 
		\bigg(\frac{1}{|u_1-u_2|}\bigg)^{\alpha},
	$$
	where one can take $\alpha=\max(0,k+\ell-n)$ or
	$\alpha=\min(k,\ell,
			k+\ell+\beta-n)$.

\end{lemma}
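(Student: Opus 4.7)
The proof proceeds by case analysis on the distance $|u_1-u_2|$ together with a spatial decomposition of the integration region. For $|u_1-u_2| \leq 1$, I would translate $w = z - u_1$ and exploit that the two singularities (at $w = 0$ and $w = u_2-u_1$) effectively merge into a single singularity of order $k+\ell$ at the origin. Splitting into a bounded neighborhood of the origin versus its exterior, the local contribution is handled with polar coordinates and yields $|u_1-u_2|^{n-k-\ell}$ when $k+\ell > n$ or a uniform bound when $k+\ell < n$, while the exterior piece is controlled by the weight $\la w \ra^{-\beta-}$ using $k+\ell+\beta \geq n$.

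For $|u_1-u_2| > 1$, I would partition $\R^n = A_1 \cup A_2 \cup A_3$, where $A_i = \{z : |z-u_i| \leq |u_1-u_2|/2\}$ for $i = 1,2$ and $A_3$ is the complement. On $A_3$ both $|z-u_i| \gtrsim |u_1-u_2|$, so the contribution is of size $|u_1-u_2|^{-k-\ell}$ times an integral of $\la z\ra^{-\beta-}$ over $|z| \gtrsim |u_1-u_2|$, which is subsumed by the claimed exponent. On $A_1$ the factor $|z-u_2|^{-\ell} \approx |u_1-u_2|^{-\ell}$ factors out, leaving
\begin{equation*}
|u_1-u_2|^{-\ell}\int_{|w|\leq |u_1-u_2|/2} \la w+u_1\ra^{-\beta-}|w|^{-k}\,dw,
\end{equation*}
which I would bound by separating the cases $\la u_1\ra \gtrsim |u_1-u_2|$ (where the weight is essentially the constant $\la u_1\ra^{-\beta-}$ throughout $A_1$) from $\la u_1\ra \ll |u_1-u_2|$ (where a dyadic decomposition in $|w|$ paired with the weight produces a factor of $|u_1-u_2|^{-(k+\ell+\beta-n)}$ or better). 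The region $A_2$ is handled symmetrically, producing $|u_1-u_2|^{-k}$ in place of $|u_1-u_2|^{-\ell}$, and taking the worst of the three regions gives the asserted exponent $\min(k,\ell,k+\ell+\beta-n)$.

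The main obstacle is the careful subcase analysis within $A_1$ and $A_2$, where one must compare $\la u_i\ra$ to $|u_1-u_2|$ and track whether the dominant contribution comes from the local $|w|^{-k}$ singularity, from the weight's decay, or from a balance between them. The hypotheses $0\leq k,\ell < n$ and $k+\ell+\beta\geq n$ with $k+\ell\neq n$ are precisely what guarantees that no logarithmic corrections appear and that every sub-integral converges to one of the claimed powers of $|u_1-u_2|$; the second form of the bound then follows by noting that in the regime $|u_1-u_2|\leq 1$ the right-hand side is at most $|u_1-u_2|^{-\max(0,k+\ell-n)}$, while in the regime $|u_1-u_2|>1$ both $\max(0,k+\ell-n)$ and $\min(k,\ell,k+\ell+\beta-n)$ are admissible choices of $\alpha$.
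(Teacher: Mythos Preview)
The paper does not prove this lemma; it simply states the result and cites \cite{EG} (Erdo\u{g}an--Green, dimension two) for the proof.  Your sketch is a correct and standard argument for this type of estimate: the decomposition into balls around each pole together with the far-field region, and the subcase analysis comparing $\la u_i\ra$ to $|u_1-u_2|$, is exactly how such bounds are established.  One small point of phrasing: in the regime $|u_1-u_2|\le 1$ the two singularities do not literally ``merge''; you still need the three-region split (near $u_1$, near $u_2$, and the annulus $|z-u_1|\approx|z-u_2|\gtrsim|u_1-u_2|$) to see that each piece contributes at most $|u_1-u_2|^{n-k-\ell}$ when $k+\ell>n$.  With that adjustment your outline is complete and would reproduce the cited proof.
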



\begin{thebibliography}{9}

\bibitem{AS} Abramowitz, M. and  I.~A. Stegun. \emph{Handbook of mathematical functions with
formulas, graphs, and mathematical tables.} National Bureau of Standards
Applied Mathematics Series, 55. For sale by the Superintendent of Documents,
U.S. Government Printing Office, Washington, D.C. 1964


\bibitem{agmon} Agmon, S. {\em Spectral properties of Schr\"odinger
operators and scattering theory.}
Ann.\ Scuola Norm.\ Sup.\ Pisa Cl.\ Sci. (4) 2 (1975), no.~2, 151--218.



\bibitem{BecGol} Beceanu, M. and Goldberg, M. \emph{{S}chr\"odinger dispersive estimates
for a scaling-critical class of potentials}.  Comm.\ Math. Phys. 314 (2012), no. 2, 471--481.

\bibitem{CCV}
Cardosa, F., Cuevas, C., and Vodev, G.  \emph{Dispersive estimates for the
Schr\"{o}dinger equation in dimensions four and five}.   Asymptot.\ Anal.\  62  (2009),  no. 3-4, 125--145. 

 
\bibitem{EGG}
Erdo\smash{\u{g}}an, M.~B., Goldberg, M.~J.,  and Green, W.~R. \emph{Dispersive estimates for four dimensional Schr\"odinger and wave equations with obstructions at zero energy}, Comm. PDE. 39 (2014), no. 10, 1936--1964.
 
\bibitem{EG1}
Erdo\smash{\u{g}}an, M.~B.  and Green, W.~R. \emph{Dispersive estimates for the Schrodinger equation for 
$C^{\frac{n-3}{2}}$ potentials in odd dimensions}. Int.\ Math.\ Res.\ Notices 2010:13, 2532--2565.

\bibitem{EG}
Erdo\smash{\u{g}}an, M.~B.  and Green, W.~R. \emph{Dispersive estimates for Schr\"{o}dinger operators in
dimension two with obstructions at zero energy}. Trans. Amer. Math. Soc. 365 (2013), 6403--6440.

\bibitem{EG2}
Erdo\smash{\u{g}}an, M.~B.  and Green, W.~R. \emph{A weighted dispersive estimates for Schr\"{o}dinger 
operators in dimension two}.  Comm. Math. Phys.
vol. 319, no. 3 (2013), 791--811.


\bibitem{ES} Erdo\smash{\u{g}}an, M. B., and Schlag, W. \emph{Dispersive estimates for Schr\"{o}dinger 
operators in the presence of
a resonance and/or eigenvalue at zero energy in dimension three: II.}  J. Anal. Math.  99  (2006), 199--248.

\bibitem{ES2} Erdo\smash{\u{g}}an, M.~B.  and Schlag W.
\emph{Dispersive estimates for Schr\"{o}dinger operators in the presence of a resonance and/or an eigenvalue at
zero energy in dimension three: I}. Dynamics of PDE 1 (2004), 359--379.
 
\bibitem{FY} Finco, D.  and Yajima, K. \emph{The $L^p$ boundedness of wave operators 
for Schr\"{o}dinger operators with threshold singularities II. Even dimensional case}.  J.\ Math.\ Sci.\
Univ.\ Tokyo  13  (2006),  no. 3, 277--346.

\bibitem{goldE}
Goldberg, M. \emph{A Dispersive Bound for Three-Dimensional Schr\"odinger Operators with
Zero Energy Eigenvalues.} Comm.\ PDE 35 (2010), 1610--1634.



\bibitem{GGodd}
Goldberg, M. and Green, W. \emph{Dispersive Estimates for higher
dimensional Schr\"odinger Operators with threshold
eigenvalues I: The odd dimensional case}.  To appear in
J. Funct. Anal.

\bibitem{GS}
Goldberg, M.  and Schlag, W. \emph{Dispersive estimates for Schr\"{o}dinger
operators in dimensions one and three}. Comm. Math. Phys. vol. 251, no. 1 (2004), 157--178.

\bibitem{GV} Goldberg, M.  and Visan, M.  \emph{A Counterexample to Dispersive Estimates}.
Comm.\ Math.\ Phys.\ 266 (2006), no. 1, 211--238.

\bibitem{Gr} Green, W. \emph{Dispersive estimates for matrix and scalar Schr\"odinger operators in dimension five}. Illinois J. Math. Volume 56, Number 2 (2012), 307-341.



\bibitem{Jen}
Jensen, A. \emph{Spectral properties of Schr\"odinger operators and time-decay of the wave functions results in $L^2(R^m)$,
$m\geq 5$}. Duke Math.\ J.\ 47 (1980), no. 1, 57--80.

\bibitem{Jen2}
Jensen, A. \emph{Spectral properties of Schr\"odinger operators and time-decay of the wave functions. Results in 
$L^2 (R^4)$}.
J.\ Math.\ Anal.\ Appl.\ 101 (1984), no. 2, 397--422.

\bibitem{JenKat}  Jensen, A. and  Kato, T. {\em Spectral properties of
Schr\"odinger operators and time-decay of the wave functions.}
Duke Math.\ J.\ 46  (1979), no.~3, 583--611.

\bibitem{JN}
Jensen, A.  and Nenciu, G. \emph{A unified approach to resolvent expansions at thresholds}.
Rev.\ Mat.\ Phys.\   13, no. 6 (2001), 717--754.

\bibitem{JY4}
Jensen, A., and Yajima, K.
\emph{On $L^p$ boundedness of wave operators for 4-dimensional Schr\"odinger operators with threshold singularities.}
Proc. Lond. Math. Soc. (3) 96 (2008), no. 1, 136--162. 

\bibitem{JSS}  Journ\'e, J.-L.,  Soffer, and A., Sogge, C.\ D. {\em Decay
estimates for Schr\"odinger operators.} Comm.\ Pure Appl.\ Math.\ 44 (1991),
no.~5, 573--604.



\bibitem{Mur} Murata, M. {\em Asymptotic expansions in time for solutions of Schr\"odinger-type
equations}. J.\ Funct.\ Anal.\ 49 (1) (1982), 10--56.




\bibitem{Rauch} Rauch, J. {\em Local decay of scattering solutions to
Schr\"odinger's equation.} Comm.\ Math.\ Phys.\ 61  (1978), no.~2,
149--168.

\bibitem{RS}
Reed, M.  and  Simon, B. \emph{Methods of Modern Mathematical Physics I: Functional
Analysis, IV: Analysis of Operators}, Academic Press, New York, NY, 1972.




\bibitem{RodSch}  Rodnianski, I. and Schlag, W. {\em Time decay for solutions of Schr\"odinger
equations with rough and time-dependent potentials.} Invent.\ Math.\ 155 (2004), no.~3, 451--513.



\bibitem{Sc2}
Schlag, W. \emph{Dispersive estimates for Schr\"{o}dinger operators in
dimension two.} Comm.\ Math.\ Phys.\ 257 (2005), no. 1, 87--117.


\bibitem{Scs}
Schlag, W. \emph{Dispersive estimates for Schr\"odinger operators: a survey}.
Mathematical aspects of nonlinear dispersive equations, 255--285, Ann.\ of Math.\ Stud.\  163,
Princeton Univ. Press, Princeton, NJ, 2007.

\bibitem{Wed}
Weder, R. \emph{$L^p - L^{p'}$ estimates for the Schr\"{o}dinger equation on the line and inverse scattering for the nonlinear Schr\"{o}dinger equation with a potential}. J. Funct. Anal. 170 (2000), no. 1, 37--68.


\bibitem{Yaj3} Yajima, K. \emph{Dispersive estimate for Schr\"odinger equations with threshold resonance and eigenvalue}.
Comm.\
Math.\ Phys.\  259  (2005), 475--509.

\bibitem{Yaj}
Yajima, K. \emph{The $L^p$ Boundedness of wave operators for Schr\"{o}dinger
operators with threshold singularities I. The odd dimensional case}. J.\ Math.\
Sci.\ Univ.\ Tokyo 13 (2006), 43--94.

\end{thebibliography}
\end{document}